	\newcolumntype{Y}{>{\centering\arraybackslash}X}
	\definecolor{linkblue}{HTML}{3d25b9}
	\setlist{topsep=0pt,itemsep=0pt}
	\titlespacing{\section}{0pt}{12pt}{0pt}
	\titlespacing{\subsection}{0pt}{6pt}{0pt}
	\crefname{equation}{equation}{equations}
\theoremstyle{plain}
	\newtheorem{theorem}{Theorem}
	\newtheorem{proposition}[theorem]{Proposition}
	\newtheorem{lemma}[theorem]{Lemma}
	\numberwithin{theorem}{section}
\theoremstyle{definition}
\newcommand{\dd}{\mathrm{d}}
\newcommand{\zq}{\zeta_q}
\newcommand{\zqm}{\zeta_q^M}
\newcommand{\pp}{\bm{p}}
\newcommand{\zz}{\bm{z}}
\newcommand{\LL}{\bm{L}}
\newcommand{\C}{\mathcal{C}}
\renewcommand{\geq}{\geqslant}
\renewcommand{\leq}{\leqslant}
\newcommand{\ket}[1]{| \, #1 \, \rangle}
\DeclareMathOperator{\tr}{Tr}
\newcommand{\bc}{\mathbb{C}}
\newcommand{\bn}{\mathbb{N}}
\newcommand{\bq}{\mathbb{Q}}
\newcommand{\cl}{\mathcal{L}}
\newcommand{\cm}{\mathcal{M}}
\newcommand{\cf}{\mathcal{F}}
\newcommand{\modm}{\cal M}
\newcommand{\Res}{\mathop{\,\rm Res\,}}
\tikzset{c/.style={every coordinate/.try}}
\let\OLDthebibliography\thebibliography
\renewcommand\thebibliography[1]{
 \OLDthebibliography{#1}
 \setlength{\parskip}{0pt}
 \setlength{\itemsep}{4pt plus 0.3ex}
}
\title{From double-scaled SYK correlators to Weil--Petersson volumes}
\author{Norman Do \and Paul Norbury}
\begin{document}

\makeatletter
\textbf{\large \thetitle}

\textbf{\theauthor}
\makeatother

School of Mathematics, Monash University, VIC 3800 Australia \\
Email: \href{mailto:norm.do@monash.edu}{norm.do@monash.edu}

School of Mathematics and Statistics, The University of Melbourne, VIC 3010 Australia \\
Email: \href{mailto:norbury@ms.unimelb.edu.au}{norbury@ms.unimelb.edu.au}

{\em Abstract.} Okuyama introduced a family of polynomials, whose coefficients depend on a parameter $q$, in his study of correlators in the double-scaled SYK model. He verified in small cases that their coefficients can be expressed in terms of certain $q$-zeta values and that the polynomials recover the Weil--Petersson volumes of moduli spaces studied by Mirzakhani under a certain $q \to 1$ limit. In this paper, we provide mathematically rigorous proofs of these two phenomena. The authors previously defined natural $q$-deformations of the Weil--Petersson volumes of moduli spaces of curves. We prove that these polynomials appear as the top degree part of Okuyama's polynomials. Our work provides a link between the two topics of the title, which hints at a ``quantum'' Weil--Petersson geometry and a combinatorial-geometric approach to double-scaled SYK correlators.

\emph{Acknowledgements.} The first author was supported by the Australian Research Council grant FT240100795.

\emph{2020 Mathematics Subject Classification.} 14H10; 32G15; 81T32

\vspace{6mm} \hrule \vspace{4mm}

\tableofcontents

\vspace{6mm} \hrule

\section{Introduction}

Motivated by the calculation of correlators in the double-scaled Sachdev--Ye--Kitaev (SYK) model, Okuyama defined certain {\em discrete Weil--Petersson volumes} $N^q_{g,n}(b_1, \ldots, b_n)$, where $g$ is a non-negative integer, $b_1, \ldots, b_n$ are positive integers, and $q = e^{-\lambda}$ for $\lambda$ a natural parameter in the model~\cite{oku23}. Okuyama's paper includes some illustrative examples, such as the following.
\[
N^q_{1,2}(b_1, b_2) = 
\begin{cases}
\frac{1}{384} (b_1^4 + b_2^4) + \frac{1}{192} b_1^2b_2^2 + \frac{8 \zq(2)-1}{32} (b_1^2 + b_2^2) + \frac{5 \zq(4)}{2} + \frac{7 \zq(2)^2}{2} - \frac{\zq(2)}{2} + \frac{1}{12}, & \!\!\!\text{for } b_1, b_2 \text{ even}, \\
\frac{1}{384} (b_1^4 + b_2^4) + \frac{1}{192} b_1^2b_2^2 + \frac{8 \zq(2)-1}{32} (b_1^2 + b_2^2) + \frac{5 \zq(4)}{2} + \frac{7 \zq(2)^2}{2} - \frac{\zq(2)}{2} + \frac{5}{96}, & \!\!\!\text{for } b_1, b_2 \text{ odd}, \\
0, & \!\!\!\text{otherwise}.
\end{cases}
\]
Here, $\zq(s)$ represents the following $q$-analogue of the Riemann zeta function.
\begin{equation} \label{eq:zetaq}
\zq(s) = \sum_{m=1}^\infty \frac{q^{ms/2}}{(1 - q^m)^s}
\end{equation}

Okuyama verified in the cases $(g,n) = (0,3), (1,1), (1,2)$ and $(2,1)$ that $N^q_{g,n}(b_1, \ldots, b_n)$ is a quasi-polynomial in $b_1^2, \ldots, b_n^2$ with coefficients that are polynomial in the even $q$-zeta values $\zq(2), \zq(4), \zq(6), \ldots$. Here and throughout, we use the term {\em quasi-polynomial} to refer to a function on tuples of positive integers that is polynomial when restricted to residue classes modulo 2. Okuyama furthermore verified in these cases that a certain $q \to 1$ limit of $N^q_{g,n}(b_1, \ldots, b_n)$ recovers the Weil--Petersson volume $V_{g,n}(L_1, \ldots, L_n)$ of the moduli space ${\mathcal M}_{g,n}(L_1, \ldots, L_n)$ of hyperbolic surfaces with genus $g$ and $n$ labelled geodesic boundaries of lengths $L_1, \ldots, L_n$. These were famously studied by Mirzakhani, who showed that they are polynomial in $L_1^2, \ldots, L_n^2$ and that the coefficients are intersection numbers on the Deligne--Mumford compactification $\overline{\mathcal M}_{g,n}$ of the moduli space of curves~\cite{mir07b}.

In the present work, we provide mathematically rigorous proofs of the two aforementioned phenomena, which we state as the following theorems. Define the degree map $\deg:\mathbb{Q}[b_1, b_2, \ldots, \zq(2), \zq(4), \zq(6), \ldots] \to \bn$
by assigning
\[
\deg b_i = 1 \qquad \text{and} \qquad \deg \zeta_q(2k) = 2k,
\]
and extending in the usual way to monomials using additivity across products and then to polynomials by taking a maximum over monomials. This combines the usual notion of degree of a monomial in $b_1, b_2, \ldots$ with a notion of degree for its coefficient as an element of $\mathbb{Q}[\zq(2), \zq(4), \zq(6), \ldots]$ to form a graded ring. One can furthermore extend this definition of degree to a quasi-polynomial in $b_1, b_2, \ldots$ by considering it as a collection of polynomials and taking the maximal degree of these.

\begin{theorem} \label{thm:qzeta}
For $(g,n) \neq (0,1)$ or $(0,2)$, Okuyama's discrete volume $N^q_{g,n}(b_1, \ldots, b_n)$ is a quasi-polynomial in $b_1^2, \ldots, b_n^2$ with coefficients in $\mathbb{Q}[\zq(2), \zq(4), \zq(6), \ldots]$ of degree $6g-6+2n$, using the degree defined above.
\end{theorem}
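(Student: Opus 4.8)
The plan is to argue by induction on the complexity $2g-2+n$, using a discrete, $q$-deformed analogue of Mirzakhani's recursion. The base cases are the minimal stable pairs $(g,n) = (0,3)$ and $(1,1)$, for which $2g-2+n = 1$; here the claim — a constant of total degree $0$ and a quasi-polynomial of total degree $2$, respectively — can be checked directly from Okuyama's explicit formulas.

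The first step is to record the recursion. I expect $N^q_{g,n}$ to obey a discrete version of Mirzakhani's recursion in which $b_1\, N^q_{g,n}(b_1,\ldots,b_n)$ is expressed as a sum of three contributions: a non-separating term assembled from $N^q_{g-1,n+1}$ with two of its arguments summed against a kernel, separating terms assembled from products $N^q_{g_1,n_1} \cdot N^q_{g_2,n_2}$ with $g_1+g_2 = g$, and boundary-merging terms assembled from $N^q_{g,n-1}$. In each contribution the integrals of Mirzakhani's recursion are replaced by sums over integer boundary lengths and the kernels are $q$-deformations of Mirzakhani's. Every volume on the right-hand side has strictly smaller complexity, so the inductive hypothesis applies to each.

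The technical core is a lemma describing a single kernel summation. I would show that, for each relevant kernel, the map sending a monomial $\ell \mapsto \ell^{2k+1}$ (together with its two-variable analogue) to the sum $\sum_{\ell} K_q(b_1,\ell)\,\ell^{2k+1}$ yields a quasi-polynomial in $b_1$ with coefficients in $\mathbb{Q}[\zq(2),\zq(4),\ldots]$. The mechanism is that the tails of these sums evaluate to the $q$-zeta values of \eqref{eq:zetaq}, just as the analogous kernel integrals in Mirzakhani's recursion evaluate to $\zeta(2j)$; the definition of $\zq$ is precisely what makes these tails close up inside $\mathbb{Q}[\zq(2),\zq(4),\ldots]$. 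The parity dependence responsible for quasi-polynomiality rather than honest polynomiality, already visible in the $(1,2)$ formula, arises from the dependence of these sums on $b_1 \bmod 2$. Passing the inductive hypothesis through this lemma and summing the three contributions then delivers both quasi-polynomiality and membership of the coefficients in $\mathbb{Q}[\zq(2),\zq(4),\ldots]$.

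The main obstacle is the degree bookkeeping. With respect to the grading $\deg b_i = 1$ and $\deg \zq(2k) = 2k$, the key observation is that the kernel summation is graded of a controlled weight: the one-variable sum sends $\ell^{2k+1}$ to a quasi-polynomial of total degree at most $2k+2$ whose top part is genuinely homogeneous, since each term it produces has the shape $\zq(2j)\,b_1^{2(k+1-j)}$ of total degree $2j + 2(k+1-j) = 2k+2$, the lower-degree corrections arising from the discreteness of the sum. This is exactly the point at which assigning $\zq(2k)$ the degree $2k$, rather than the classical degree $0$, is forced. A short calculation shows that the two-variable summations (non-separating and separating) raise total degree by $4$ while the one-variable summation (boundary-merging) raises it by $2$; tracking these shifts through the recursion and matching them against the inductively-assumed input degrees $6g'-6+2n'$ produces output of total degree exactly $6g-6+2n$ in each contribution, giving the upper bound $\deg N^q_{g,n} \leq 6g-6+2n$. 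The bulk of the labour is the term-by-term verification that no contribution overshoots this bound. For the matching lower bound, I would note that the pure-$b$ part of top degree $6g-6+2n$ — the part carrying no $\zq$ factor — recovers, up to the normalization of the $q \to 1$ limit, the top-degree part of Mirzakhani's volume $V_{g,n}$, a nonzero combination of $\psi$-class intersection numbers; hence the degree $6g-6+2n$ is attained.
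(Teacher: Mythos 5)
There is a genuine gap, and it sits at the very first step of your plan. You propose to ``record'' a discrete, $q$-deformed Mirzakhani-type recursion in which $b_1 N^q_{g,n}(b_1,\ldots,b_n)$ is expressed through kernel \emph{sums} over integer lengths applied to volumes of smaller complexity. No such recursion is available: Okuyama's discrete volumes are \emph{defined} only through the expansion \cref{eq:omegaandN} of the correlation differentials obtained by applying the topological recursion \cref{eq:TR} to the spectral curve \cref{eq:okuyama}. Converting that definition into a recursion acting directly on the quasi-polynomials $N^q_{g,n}$, with explicit discrete kernels whose tails close up into the values $\zq(2k)$, is precisely the hard analytic content of the theorem, not a known input to it. The paper never produces such a recursion; instead it works at the level of the differentials themselves, expanding the recursion kernel $1/(2\,y^M(z)\,x'(z))$ at the branch points $z=\pm1$ in terms of truncated $q$-zeta values (\cref{lem:expansions}), propagating the resulting degree bound through the principal-parts form of the recursion (\cref{prop:preproof1}), and only then extracting $N^M_{g,n}$ by expanding $(z\mp1)^{-k-2}$ in powers of $z$ and invoking \cref{thm:norsco} for quasi-polynomiality in $b_i^2$ (proof of \cref{thm:Mqzeta}). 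Your proposal assumes away exactly the step that this machinery exists to supply.

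There is also concrete evidence that the recursion you posit, if it exists at all, cannot have the simple form you describe. The paper shows (\cref{lem:dint,lem:sint,specqvol}) that the $q$-deformation of Mirzakhani's recursion \cref{eq:mirzakhani} with kernel $H_q$ of \cref{eq:Hq} corresponds to the spectral curve $\mathcal{S}^{\mathrm{top}}$, which retains only the leading coefficients $a_{2k,k}$ of the kernel expansion; by \cref{thm:topdegterms} its output $V^q_{g,n}$ captures only the \emph{top-degree} part of $N^q_{g,n}$ and is a genuine polynomial, with no parity dependence. The full $N^q_{g,n}$, including its parity classes and all lower-order terms, is governed by the subleading coefficients $a_{mk}$ with $m>2k$, so any honest recursion for it would need kernels encoding this additional data -- something neither you nor the literature provides. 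Finally, even granting your recursion, your closing claim that the top-degree pure-$b$ part is a nonzero combination of $\psi$-class intersection numbers (needed to show the degree is attained, not merely bounded) is itself an unproven identification within your framework; in the paper this attainment is what \cref{thm:WPvolumes} delivers, since the limit there equals $2^{3-2g-n}V_{g,n}\neq 0$ and would vanish if the degree fell short of $6g-6+2n$.
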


\begin{theorem} \label{thm:WPvolumes}
For $(g,n) \neq (0,1)$ or $(0,2)$, Okuyama's discrete volume $N^q_{g,n}(b_1, \ldots, b_n)$ recovers the Weil--Petersson volume $V_{g,n}(L_1, \ldots, L_n)$ via the limit
\[
\lim_{\lambda \to 0} \lambda^{6g-6+2n} \, {N}^q_{g,n} \Big( \frac{L_1}{\lambda}, \ldots, \frac{L_n}{\lambda} \Big) = 2^{3-2g-n} \, V_{g,n}(L_1, \ldots, L_n).
\]
\end{theorem}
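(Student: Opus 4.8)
The plan is to use Theorem~\ref{thm:qzeta} to collapse the limit onto the top-degree part of $N^q_{g,n}$ and then identify that part with Mirzakhani's volume. Write $d = 6g-6+2n$. The degree map is engineered so that each generator develops a pole of matching order as $\lambda \to 0$: substituting $b_i = L_i/\lambda$ produces a factor $\lambda^{-1}$ per unit of $b$-degree, and I claim each even $q$-zeta value satisfies $\zq(2k) \sim \zeta(2k)\,\lambda^{-2k}$, a pole of order equal to its degree $2k$.

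The analytic input is this asymptotic. With $q = e^{-\lambda}$, a direct rewriting of \eqref{eq:zetaq} using $\frac{q^{m/2}}{1-q^m} = \frac{1}{q^{-m/2}-q^{m/2}} = \frac{1}{2\sinh(m\lambda/2)}$ gives the closed form
\[
\lambda^{2k}\,\zq(2k) = \sum_{m=1}^\infty \left(\frac{\lambda}{2\sinh(m\lambda/2)}\right)^{2k}.
\]
Since $\sinh t \geq t$, each summand is bounded above by $m^{-2k}$ uniformly in $\lambda$, while it converges pointwise to $m^{-2k}$ as $\lambda \to 0$; dominated convergence then yields $\lambda^{2k}\zq(2k) \to \sum_m m^{-2k} = \zeta(2k)$. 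As any coefficient of $N^q_{g,n}$ is a polynomial in finitely many $\zq(2k)$, it follows that $\lambda^{\deg c}\,c \to c\big|_{\zq(2k)\mapsto\zeta(2k)}$ for every monomial $c$ in the $q$-zeta values.

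Next comes the degree bookkeeping. By Theorem~\ref{thm:qzeta}, $N^q_{g,n}$ is a finite sum of monomials $c\cdot b_1^{2a_1}\cdots b_n^{2a_n}$ of total degree at most $d$; I note that its degree-$d$ part $P_d$ is a genuine polynomial, independent of residue class, so that evaluating the quasi-polynomial at the scaled (non-integer) arguments $L_i/\lambda$ via any polynomial representative gives a well-defined limit. After substituting $b_i = L_i/\lambda$ and multiplying by $\lambda^d$, a monomial of total degree $\delta$ contributes $\lambda^{d-\delta}$ times a convergent quantity, so every term with $\delta < d$ vanishes in the limit and
\[
\lim_{\lambda\to 0}\lambda^d\,N^q_{g,n}\Big(\tfrac{L_1}{\lambda},\ldots,\tfrac{L_n}{\lambda}\Big) = P_d\big|_{\,b_i\mapsto L_i,\ \zq(2k)\mapsto\zeta(2k)}.
\]
Because $\zeta(2k)\in\pi^{2k}\,\mathbb{Q}$, the right-hand side is a homogeneous polynomial of degree $d$ in $L_1,\ldots,L_n$ and $\pi$, which is exactly the shape of $V_{g,n}$, itself homogeneous of this degree under $\deg L_i = \deg \pi = 1$.

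It remains to show this equals $2^{3-2g-n}V_{g,n}$, and this is the main obstacle. Here I would invoke the identification of $P_d$ with the authors' $q$-deformed Weil--Petersson volume, together with its classical $q\to1$ degeneration, under which $\zq(2k)\mapsto\zeta(2k)$ sends the polynomial to Mirzakhani's $V_{g,n}$ up to normalisation. The analytic and degree-counting steps are routine; the real content is pinning down the constant $2^{3-2g-n}$ for all stable $(g,n)$, which I expect to reduce to a comparison of the defining recursions or integral representations of Okuyama's and Mirzakhani's volumes, where the powers of $2$ accumulate from boundary-length conventions. As a consistency check, for $(g,n)=(1,2)$ the factor is $\tfrac12$, and substituting $\zeta(2)=\pi^2/6$ and $\zeta(4)=\pi^4/90$ into the degree-$4$ part of the displayed $N^q_{1,2}$ reproduces $\tfrac12 V_{1,2}$ exactly.
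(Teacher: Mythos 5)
Your first two steps are sound: the asymptotic $\lambda^{2k}\zq(2k) \to \zeta(2k)$ (via $\frac{q^{m/2}}{1-q^m} = \frac{1}{2\sinh(m\lambda/2)}$ and dominated convergence) is correct, and the degree filtering that collapses the limit onto the top-degree part is exactly the kind of bookkeeping the paper performs via \cref{eq:quasipolynomial,eq:limit2}. Your consistency check for $(g,n)=(1,2)$ is also correct. But the proposal has a genuine gap at precisely the point you identify as ``the main obstacle'': identifying the top-degree part $P_d$, after the substitution $\zq(2k)\mapsto\zeta(2k)$, with $2^{3-2g-n}V_{g,n}$. This identification \emph{is} the theorem; nothing in \cref{thm:qzeta} connects the coefficients of $N^q_{g,n}$ to Weil--Petersson geometry, so the analytic and degree-counting steps alone prove only that the limit exists and is a homogeneous polynomial of the right shape. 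Your suggested remedy --- invoking \cref{thm:topdegterms} together with a $q\to1$ degeneration of $V^q_{g,n}$ to $V_{g,n}$ --- is not carried out: \cref{thm:topdegterms} is itself a separate theorem requiring the spectral-curve analysis of \cref{sec:top} (\cref{lem:topdegTR}, \cref{specqvol}), and the degeneration statement for $V^q_{g,n}$ with the correct normalisation is exactly where the constant $2^{3-2g-n}$ lives, which you explicitly leave undetermined. Note also that your assertion that $P_d$ is a genuine polynomial (needed even to make sense of evaluating at the non-integer arguments $L_i/\lambda$) is itself a consequence of this later analysis, not of \cref{thm:qzeta}.

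The paper closes this gap by a quite different mechanism: it compares the two topological recursions directly. \Cref{lem:kernellimit} shows that under $z = \lambda t + 1$ the recursion kernel of Okuyama's curve converges to the kernel of the Eynard--Orantin curve $x(t)=\tfrac{t^2}{2}$, $y(t)=\tfrac{1}{2\pi}\sin(2\pi t)$; \cref{prop:omegalimit} then propagates this through the recursion by induction on $2g-2+n$, yielding $\lim_{\lambda\to 0}\lambda^{6g-6+3n}\omega_{g,n} = 2^{2-2g-n}\omega^{\mathrm{WP}}_{g,n}$, where the powers of $2$ are pinned down by the base cases $\omega_{0,3}$ and $\omega_{1,1}$. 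The theorem then follows by inverse Laplace transform (using Eynard--Orantin's identification of $\omega^{\mathrm{WP}}_{g,n}$ with the volumes) together with \cref{lem:equality}, which shows that only the contributions with all $a_i=+1$ or all $a_i=-1$ survive the limit --- these two branch points supply the remaining factor of $2$, giving $2\cdot 2^{2-2g-n}=2^{3-2g-n}$. To complete your argument you would need either to reproduce this inductive comparison of correlation differentials, or to prove \cref{thm:topdegterms} and the normalised $q\to1$ limit of $V^q_{g,n}$ independently; as written, the decisive step is asserted rather than proved.
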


Okuyama's definition of the discrete Weil--Petersson volumes uses the topological recursion of Chekhov--Eynard--Orantin~\cite{che-eyn06,eyn-ora07a}. The topological recursion takes as input the data of a spectral curve and produces a family of correlation differentials $\omega_{g,n}$, for $g \geq 0$ and $n \geq 1$. For various choices of spectral curve, these correlation differentials store interesting enumerative information in their expansion coefficients. See \cref{subsec:TR} for further information on the topological recursion. More precisely, Okuyama defines the spectral curve
\[
x(z) = z + z^{-1} \qquad \text{and} \qquad y(z) = \frac{1}{2} (z - z^{-1}) \prod_{k=1}^\infty \frac{(1-q^kz^2) \, (1-q^kz^{-2})}{(1-q^k)^2},
\]
where $z \in \mathbb{CP}^1$, and expands the corresponding correlation differentials thus:
\begin{equation} \label{eq:omegaandN}
\omega_{g,n}(z_1, \ldots, z_n) = \sum_{b_1, \ldots, b_n = 1}^\infty N^q_{g,n}(b_1, \ldots, b_n) \prod_{i=1}^n b_i z_i^{b_i-1} \, \dd z_i.
\end{equation}

Given this definition of the discrete Weil--Petersson volume $N^q_{g,n}(b_1, \ldots, b_n)$, it is natural that the proofs of \cref{thm:qzeta,thm:WPvolumes} fundamentally use the topological recursion. The proof of \cref{thm:qzeta} is via an analysis of the recursion kernel appearing in the topological recursion, while the proof of \cref{thm:WPvolumes} relates Okuyama's spectral curve with the spectral curve known to store the Weil--Petersson volumes, first stated by Eynard and Orantin~\cite{eyn-ora07b}.

A proof of \cref{thm:WPvolumes} appeared in the recent work of Giacchetto, Maity and Mazenc~\cite{gia-mai-maz25}. They analyse so-called pruned correlators of the Gaussian Unitary Ensemble (GUE) with even potential, of which Okuyama's discrete volumes constitute an example. Their approach is different from ours in that it uses the Kontsevich--Soibelman framework for topological recursion, in which the initial data are encapsulated in a quantum Airy structure~\cite{kon-soi18,ABCO24}.

In recent work of the authors, we defined a $q$-analogue of Mirzakhani's recursion for Weil--Petersson volumes~\cite{do-nor25}. This recursion produces $q$-deformations of the Weil--Petersson volumes, given by polynomials $V^q_{g,n}(L_1, \ldots, L_n) \in \bq[[q]] [L_1^2, \ldots, L_n^2]$. It was conjectured that these polynomials agree with the top degree part of Okuyama's quasi-polynomials and we prove this here.

\begin{theorem} \label{thm:topdegterms}
For $(g,n) \neq (0,1)$ or $(0,2)$, Okuyama's discrete volume $N^q_{g,n}(b_1, \ldots, b_n)$ satisfifes
\[
{N}^q_{g,n} (b_1, \ldots, b_n) = 2^{3-2g-n} \, V^q_{g,n}(b_1, \ldots, b_n ) + [\,\text{lower degree terms}\,].
\]
\end{theorem}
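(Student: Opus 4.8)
The plan is to prove the identity by induction on the negative Euler characteristic $2g-2+n$, matching the recursion that governs the left-hand side with the one governing the right-hand side. Both quantities are defined recursively: $N^q_{g,n}$ through the topological recursion on Okuyama's spectral curve, and $V^q_{g,n}$ through the $q$-deformed Mirzakhani recursion of \cite{do-nor25}. Since the base cases $(g,n) = (0,3)$ and $(1,1)$ are given by explicit low-degree quasi-polynomials, it will suffice to show that the top-degree part of $N^q_{g,n}$, scaled by $2^{2g-3+n}$, obeys the same recursion as $V^q_{g,n}$.

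First I would translate the topological recursion into a recursion directly on the coefficients $N^q_{g,n}(b_1,\ldots,b_n)$, by substituting the expansion \eqref{eq:omegaandN} and evaluating the residues at the ramification points $z = \pm 1$ of $x = z + z^{-1}$. The global involution exchanging the two sheets is $z \mapsto z^{-1}$, under which $y(z^{-1}) = -y(z)$; hence the recursion kernel is odd under the involution and the residues produce a discrete Mirzakhani-type recursion whose kernels are built from the Taylor coefficients of $y$ at $z = \pm 1$. These coefficients are the carriers of the $q$-zeta data, and this passage is the $q$-analogue of the reduction of Eynard--Orantin topological recursion to Mirzakhani's recursion, structurally parallel to the classical reduction of lattice-point counts to Weil--Petersson volumes.

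Next I would extract the top-degree part in the combined degree of the excerpt. The crucial input is \cref{thm:qzeta}: its proof identifies the grading that makes the recursion degree-preserving, so that the top-degree part of $N^q_{g,n}$ is homogeneous of combined degree $6g-6+2n$. Assigning degree $-1$ to the local coordinate at a ramification point and degree $2k$ to $\zeta_q(2k)$, the residue operations and the products of lower correlation differentials all respect this grading; consequently the leading graded parts of the kernels induce a closed recursion satisfied by the collection $\{\,2^{2g-3+n}\,[\text{top-degree part of }N^q_{g,n}]\,\}$.

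The heart of the argument, and the step I expect to be the main obstacle, is to identify this induced top-degree recursion with the $q$-Mirzakhani recursion defining $V^q_{g,n}$ in \cite{do-nor25}. Concretely, one must show that the leading graded part of the discretised topological-recursion kernel coincides with the kernel of the $q$-Mirzakhani recursion, up to the bookkeeping factor $2^{3-2g-n} = 2 \cdot 2^{-(2g-2+n)}$, which shifts by $\tfrac12$ at each gluing and is therefore consistent with an inductive factor of $\tfrac12$ carried by the kernel. This requires a careful asymptotic analysis of the $q$-deformed kernel functions: it is a $q$-refinement of the computation underlying \cref{thm:WPvolumes}, in which one retains the formal $\zeta_q$-dependence rather than passing to the $\lambda \to 0$ limit that collapses $\zeta_q(2k)$ to its leading coefficient $\zeta(2k)$. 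Once the kernels are matched and the base cases $(0,3)$ and $(1,1)$ are verified directly, the induction closes and yields the claimed identity.
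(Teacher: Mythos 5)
Your overall strategy --- show that the top-degree parts of $N^q_{g,n}$ close under a recursion of Mirzakhani type, then identify that recursion with the $q$-deformed Mirzakhani recursion of \cite{do-nor25} up to the factor $2^{3-2g-n}$, and induct on $2g-2+n$ --- is the same strategy the paper follows, and your homogeneity argument for why the top-degree parts satisfy a closed recursion is essentially the paper's \cref{lem:topdegTR}. The problem is that the step you yourself flag as ``the main obstacle'' is exactly where all of the mathematical content lies, and your proposal does not supply it; moreover, it mischaracterises what that step is. Matching the two kernels is not an ``asymptotic analysis'' or a $q$-refinement of the $\lambda \to 0$ computation behind \cref{thm:WPvolumes}: retaining the $\zeta_q$-dependence in \cref{lem:expansions} only tells you that the top-degree recursion kernel on the Okuyama side is $\frac{1}{4}t^{-1}\exp\big(\sum_k \frac{4^k}{k}\zeta_q(2k)\,t^{2k}\big)$, with $t = z-1$. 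It says nothing about the kernel $H_q$ of \cref{eq:Hq}, which acts by integration against polynomials. To connect the two, one needs the \emph{exact} moment identity for $H_q$ proved in \cite[Proposition~4]{do-nor25}:
\[
\int_0^\infty \frac{x^{2k-1}}{(2k-1)!}\,H_q(x,y)\,\dd x = \sum_{n=0}^{k} b_n\,\frac{y^{2k-2n}}{(2k-2n)!},
\qquad \text{where} \quad
\sum_{n=0}^\infty b_n z^{2n} = \exp\bigg(\sum_{m=1}^\infty \frac{\zeta_q(2m)}{m}(4z^2)^m\bigg),
\]
whose generating function reproduces precisely the exponential factor above. This identity is what makes the Laplace transform of the $q$-Mirzakhani integral operators equal to the principal-part operators of the top-degree topological recursion; the paper packages it as \cref{lem:dint,lem:sint} and then compares both sides against the single auxiliary spectral curve $\mathcal{S}^{\mathrm{top}}$ of \cref{eq:topcurve} (\cref{lem:topdegTR} on the Okuyama side, \cref{specqvol} on the $q$-Mirzakhani side). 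Without this input your kernel matching is an assertion, not a proof; with it, your induction would close.

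A secondary omission: before the claimed identity can even be stated, you must show that the top-degree part of $N^q_{g,n}$ is a genuine polynomial, independent of the parities of $b_1, \ldots, b_n$, since $V^q_{g,n}$ is a polynomial. This requires knowing that at top degree only the terms with all poles at $z_i = 1$ or all poles at $z_i = -1$ contribute, and that these two families of coefficients are identified (up to sign) by the symmetry $\omega_{g,n}(z_1, \ldots, z_n) = \omega_{g,n}(-z_1, \ldots, -z_n)$. This is the content of the paper's \cref{lem:equality}, and your proposal does not address it; your bookkeeping of the factor $2^{3-2g-n}$, while numerically consistent, implicitly relies on this doubling of equal contributions from the two branch points.
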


\Cref{thm:topdegterms} immediately implies that the top degree terms of Okuyama's quasi-polynomials are in fact polynomials. The proof of \cref{thm:topdegterms} involves an analysis of Okuyama's spectral curve, which leads to a spectral curve that produces the polynomials $V^q_{g,n}(L_1, \ldots, L_n) \in \bq[[q]] [L_1^2, \ldots, L_n^2]$ via topological recursion. The relationship between topological recursion and tautological intersection numbers on moduli spaces of stable curves then allows us to define $q$-deformations of the classical Weil--Petersson volumes that were not defined previously. This produces a sequence $V_2(q), V_3(q), V_4(q), \ldots \in\bq[[q]]$ with the property that $\displaystyle\lim_{q \to 1} (1-q)^{6g-6} \, V_g(q) = \mathrm{Vol}^{\mathrm{WP}}(\cm_g)$, the classical Weil--Petersson volume of the moduli space of genus $g$ curves. For the simplest case $g=2$, we have
\[
V_2(q) = \frac{191}{90} \zeta_q(2)^3 + \frac{13}{3} \zeta_q(2) \zeta_q(4) + \frac{35}{18} \zeta_q(6),
\]
which satisfies
\[
\lim_{q \to 1} (1-q)^6 \, V_2(q) = \frac{191}{90} \zeta(2)^3 + \frac{13}{3} \zeta(2) \zeta(4) + \frac{35}{18} \zeta(6) = \frac{43\pi^6}{2160} = \mathrm{Vol}^{\mathrm{WP}}(\cm_2).
\]

A particularly appealing aspect of Okuyama's discrete volumes is that they store: the geometric information of Weil--Petersson volumes through the $q \to 1$ limit of \cref{thm:WPvolumes}~\cite{mir07a,mir07b,eyn-ora07b}; the combinatorial information of enumerating lattice points in moduli spaces of curves in the $q \to 0$ limit~\cite{nor10,nor13}; and the algebro-geometric information of psi-class intersection numbers on moduli spaces of curves in the coefficients of its leading order terms~\cite{eyn-ora07a,kon92,wit91}.

Okuyama's empirical observations demonstrate a direct link between DS-SYK correlators and Weil--Petersson volumes that we prove in the present work. Such a link hints at a ``quantum'' Weil--Petersson geometry, which should underpin the $q$-analogue of the Weil--Petersson volumes recently introduced by the authors~\cite{do-nor25}. It also hints at a combinatorial-geometric approach to double-scaled SYK correlators, which may arise through interpreting them in the context of map enumeration~\cite{do-tay25}.

In \cref{sec:background}, we provide minimal introductions to topological recursion, the double-scaled SYK model, Okuyama's spectral curve, and Weil--Petersson volumes. Readers familiar with any of these topics are invited to skip the corresponding subsections. In \cref{sec:proofs}, we present the proofs of \cref{thm:qzeta,thm:WPvolumes}, and in \cref{sec:top}, we present the proof of \cref{thm:topdegterms}.

\section{Background} \label{sec:background}

\subsection{Topological recursion} \label{subsec:TR}

The topological recursion of Chekhov--Eynard--Orantin formalises and generalises the notion of loop equations from the theory of matrix models~\cite{che-eyn06,eyn-ora07a}. In its original formulation, the topological recursion takes as input a {\em spectral curve} $(\C, x, y, B)$ consisting of a compact Riemann surface $\C$, a symmetric bidifferential $B$ on $\C \times \C$, and two meromorphic functions $x, y: \C \to \mathbb{C}$, which are required to satisfy some technical assumptions. From this data, the topological recursion produces a collection of {\em correlation differentials} $\omega_{g,n}(p_1, \ldots, p_n)$ for $p_i \in \C$, where $g \geq 0$ and $n \geq 1$ are integers.

\begin{itemize}
\item {\em Base cases.} The {\em unstable} correlation differentials are defined from the spectral curve as follows.
\[
\omega_{0,1}(p_1) = y(p_1) \, \dd x(p_1) \qquad \qquad \omega_{0,2}(p_1, p_2) = B(p_1, p_2)
\]

\item {\em Recursion kernel.} The {\em branch points} of the spectral curve are the points at which $\dd x = 0$ and we assume that the order of vanishing is one. Thus, at a branch point $\alpha$, there is a locally defined non-trivial holomorphic involution $\sigma_\alpha$ such that $x(p) = x(\sigma_\alpha(p))$ for all points $p$ in a neighbourhood of $\alpha \in \C$. Define the {\em recursion kernel} in a neighbourhood of a branch point $\alpha \in \C$ to be
\begin{equation} \label{eq:recursionkernel}
K_\alpha(p_1, p) = \frac{1}{2} \frac{\int_p^{\sigma_\alpha(p)} \omega_{0,2}(p_1, p')}{\omega_{0,1}(p) - \omega_{0,1}(\sigma_\alpha(p))}.
\end{equation}

\item {\em Recursion.} Every other $\omega_{g,n}(p_1, \ldots, p_n)$ is known as a {\em stable} correlation differential and satisfies the following recursion. Here, we use the notation $S = \{2, 3, \ldots, n\}$ and write $\pp_I = (p_{i_1}, p_{i_2}, \ldots, p_{i_m})$ for $I = \{i_1, i_2, \ldots, i_m\}$.
\begin{align} \label{eq:TR}
\omega_{g,n}(p_1, \pp_S) = \sum_{\alpha} \mathop{\mathrm{Res}}_{p=\alpha} K(p_1, p) \Bigg[ &\omega_{g-1,n+1}(p, \sigma_\alpha(p), \pp_S) \notag \\
&+ \sum_{\substack{g_1+g_2=g \\ I \sqcup J = S}}^\circ \omega_{g_1,|I|+1}(p, \pp_I) \, \omega_{g_2,|J|+1}(\sigma_\alpha(p), \pp_J) \Bigg]
\end{align}
The sum is over the branch points $\alpha$ of the spectral curve and the $\circ$ over the inner summation indicates that we exclude terms that involve $\omega_{0,1}$.
\end{itemize}

In the present work, we only consider {\em rational spectral curves}, in which the underlying compact Riemann surface is $\mathcal{C} = \mathbb{CP}^1$. This leads to the symmetric bidifferential being given by
\begin{equation} \label{eq:bergman}
B(z_1, z_2) = \frac{\dd z_1 \, \dd z_2}{(z_1-z_2)^2},
\end{equation}
where we take $z_i$ to be the usual coordinate on $\mathbb{CP}^1$. Thus, a rational spectral curve is essentially specified by the two meromorphic functions $x, y: \mathbb{CP}^1 \to \mathbb{C}$. The topological recursion produces correlation differentials that satisfy many interesting properties, but we state only the most fundamental ones in the following.

\begin{proposition} \label{prop:TRproperties}
The stable correlation differential $\omega_{g,n}$ is a symmetric meromorphic multidifferential on $\C$ with poles only at the branch points. At each branch point, the order of the pole is at most $6g-4+2n$ and the residue is equal to 0.
\end{proposition}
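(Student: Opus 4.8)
The plan is to establish all four assertions---symmetry, meromorphy with poles confined to the branch points, the pole-order bound, and the vanishing of residues---simultaneously by induction on $2g-2+n$, which strictly decreases in every correlation differential appearing on the right-hand side of \eqref{eq:TR}. The base cases $\omega_{0,3}$ and $\omega_{1,1}$ are residues of the kernel against explicit combinations of the unstable $\omega_{0,1}$ and $\omega_{0,2}$, for which the four claims can be checked by hand. For the inductive step, fix a branch point $\alpha$ and a local coordinate $t$ near $\alpha$ in which $\sigma_\alpha$ acts by $t \mapsto -t$; such a coordinate exists because $\dd x$ vanishes to order one at $\alpha$, so that $x - x(\alpha)$ is even with a double zero and $\dd x \sim t\,\dd t$. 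Writing $p_1$ near $\alpha$ in a local coordinate $u$, the numerator $\int_p^{\sigma_\alpha(p)}\omega_{0,2}(p_1,\cdot)$ equals $-2t\,\dd u/(u^2 - t^2)$ to leading order, while the denominator $\omega_{0,1}(p) - \omega_{0,1}(\sigma_\alpha(p)) = [y(t) - y(-t)]\,\dd x$ vanishes to order two in $t$. Hence the recursion kernel behaves like
\[
K_\alpha(p_1,p) \sim \frac{\dd u}{t\,(u^2 - t^2)}\,\frac{1}{\dd t},
\]
so it has a simple pole in $t$ at $\alpha$ and is an even function of $u$. These two features drive the entire argument.

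Consider first the location of poles and the vanishing of residues. Since $\Res_{p=\alpha}$ localises the integrand to a neighbourhood of $\alpha$, the only singularities it can generate in the variable $p_1$ are the poles of $K_\alpha(p_1,p)$ at $p_1 = \alpha$; thus $\omega_{g,n}$ has poles in $p_1$ only at branch points. In the remaining variables the same conclusion is immediate, since they appear only as spectators in \eqref{eq:TR} and, by the inductive hypothesis, each correlation differential on the right-hand side already has poles confined to the branch points. For the residues: the evenness of $K_\alpha$ in $u$ forces the local expansion of $\omega_{g,n}$ in $p_1$ at $\alpha$ to involve only even-order poles $\dd u/u^{2k}$, each of which has zero residue; for a spectator variable $p_j$ one commutes $\Res_{p_j=\alpha}$ through $\Res_{p=\alpha}$ and invokes the inductive hypothesis. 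This proves that $\omega_{g,n}$ is a meromorphic multidifferential with poles only at the branch points and with vanishing residues there.

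It remains to bound the order of the pole at $\alpha$ by $6g-4+2n$. A short Laurent computation of $\Res_{p=\alpha} K_\alpha(p_1,p)\,\mathsf{R}$, where $\mathsf{R} = r(t)\,(\dd t)^2$ is the bracketed integrand of \eqref{eq:TR} expressed in the coordinate $t$, shows---using the local form of $K_\alpha$ above---that the resulting differential has a pole of order at most $\rho + 2$ in $p_1$ at $\alpha$, where $\rho$ is the order of the pole of $r$ at $t = 0$, and again only even orders occur. It therefore suffices to show that $\rho \leq 6g-6+2n$. For a product term $\omega_{g_1,n_1}(p,\pp_I)\,\omega_{g_2,n_2}(\sigma_\alpha(p),\pp_J)$, with $n_1 = 1+|I|$ and $n_2 = 1+|J|$, this is automatic: by induction each factor has pole order at most $6g_i - 4 + 2n_i$ in its colliding argument (with $\omega_{0,2}$, which is regular at branch points, contributing $0$), and these add to
\[
\bigl(6g_1 - 4 + 2n_1\bigr) + \bigl(6g_2 - 4 + 2n_2\bigr) = 6g - 8 + 2(n+1) = 6g - 6 + 2n,
\]
since $g_1 + g_2 = g$ and $n_1 + n_2 = n + 1$. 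The one delicate contribution is $\omega_{g-1,n+1}(p, \sigma_\alpha(p), \pp_S)$, in which two arguments collide at $\alpha$, and here the naive estimate only gives twice the single-variable bound. This is exactly where the main difficulty lies, and it is to be resolved by the \emph{quadratic loop equation}: the $\sigma_\alpha$-symmetric combination consisting of $\omega_{g-1,n+1}(p,\sigma_\alpha(p),\pp_S)$ together with all the matching products (including the unstable terms omitted from \eqref{eq:TR}) is holomorphic at $\alpha$ up to a pole of the controlled order $6g-6+2n$---intuitively because the only singularity produced by colliding two arguments is the order-two pole carried by the diagonal of $\omega_{0,2}$, so the collision raises the order by just two rather than doubling it. I would prove the linear and quadratic loop equations as auxiliary statements within the same induction, and feed the quadratic one into the Laurent count to conclude $\rho \leq 6g-6+2n$ and hence the pole-order bound $6g-4+2n$.

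Finally, symmetry of $\omega_{g,n}$ under permutations of $p_2, \ldots, p_n$ is manifest from \eqref{eq:TR}, but symmetry under interchanging $p_1$ with one of the other variables is not formal and constitutes the second substantial ingredient. I would obtain it from the standard argument of Eynard and Orantin, which uses the linear and quadratic loop equations together with residue manipulations to re-express $\omega_{g,n}$ in a form symmetric in all its arguments. Thus both hard parts of the proposition---the sharp pole-order bound and the full symmetry---rest on the same loop-equation induction, and establishing those loop equations is the principal obstacle.
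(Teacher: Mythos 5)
First, a point of context: the paper does not prove this proposition at all. It is stated as background, with the reader referred to the original papers of Eynard and Orantin~\cite{eyn-ora07a,eyn-ora09}, so your attempt can only be measured against the standard literature argument. Its architecture --- induction on $2g-2+n$, local analysis of the kernel \eqref{eq:recursionkernel}, product terms bounded by adding the inductive bounds, loop equations for the hard parts --- is reproduced correctly in outline, and you rightly isolate the two genuinely difficult points: the collision term $\omega_{g-1,n+1}(p,\sigma_\alpha(p),\pp_S)$, where naive doubling of the inductive bound overshoots, and the symmetry in $p_1$. However, you leave both to linear and quadratic loop equations that you name without proving, and to ``the standard argument of Eynard and Orantin''; so what you have is a proof plan whose entire hard content is deferred, not a proof.

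Second, there is a concrete error. Your argument for vanishing residues (and the claim ``again only even orders occur'' in the pole count) rests on the kernel being an even function of the local coordinate $u$ at $p_1=\alpha$, so that only even-order poles $\dd u/u^{2k}$ can appear. That evenness holds only for the leading term of the kernel in $t$: exactly, the numerator of \eqref{eq:recursionkernel} on a rational curve is $\bigl(\tfrac{1}{z_1-\sigma_\alpha(z)}-\tfrac{1}{z_1-z}\bigr)\dd z_1$, which is not even in $u=z_1-\alpha$, and odd-order poles genuinely occur: the paper's own expansion \eqref{eq:basecases} of $\omega^M_{1,1}$ contains the term $\tfrac{1}{16}\tfrac{\dd z_1}{(z_1-1)^3}$. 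The residues do vanish, but for a different reason: the numerator above, viewed as a $1$-form in $z_1$, has residue $-1$ at $z_1=z$ and $+1$ at $z_1=\sigma_\alpha(z)$, so its integral over a small contour around $\alpha$ enclosing both points is zero; commuting this contour integral with $\Res_{z=\alpha}$ gives $\Res_{z_1=\alpha}\omega_{g,n}=0$, and spectator variables are handled by induction as you say. You should replace the parity argument with this residue-cancellation argument; your pole-order count $\rho+2$ survives, since it uses only the leading behaviour of the kernel. After that repair, the proposal still stands or falls on the two deferred ingredients --- proving the loop equations within the induction to control the collision term, and the symmetry argument --- which are precisely the substance of the Eynard--Orantin proofs the paper points to.
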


One can consult the seminal papers of Eynard and Orantin for further details~\cite{eyn-ora07a,eyn-ora09}, keeping in mind that the notion of a spectral curve has been relaxed in subsequent works, while the technical assumptions have become more refined. In particular, a spectral curve need not be compact and the order of vanishing of $\dd x$ at the branch points may be arbitrary. Furthermore, there is a more algebraic approach to topological recursion by Kontsevich and Soibelman, which uses as input the notion of an Airy structure~\cite{kon-soi18}.

Since its inception, topological recursion has found widespread application to topics beyond matrix models such as: map enumeration~\cite{do-man14,DOPS18,kaz-zog15}, Weil--Petersson volumes of moduli spaces~\cite{eyn-ora07b}, Hurwitz numbers and their variations~\cite{ACEH20, bou-mar08,BHLM14,do-dye-mat17,do-lei-nor16,eyn-mul-saf11,BDKLM23}, Gromov--Witen theory of the sphere~\cite{nor-sco14}, topological string theory~\cite{BKMP09,eyn-ora15,fan-liu-zon20}, cohomological field theory~\cite{DOSS14}, free probability~\cite{BCGLS21}, Jackiw--Teitelboim gravity~\cite{EGGLS24}, and conjecturally quantum knot invariants~\cite{bor-eyn15,dij-fuj-man11,GJKS15}.

\subsection{The double-scaled SYK model}

The SYK model has attracted attention as an exactly solvable toy model for various physical phenomena, such as quantum chaos, strange metals and holography~\cite{sac-ye93,kit15}. It is a quantum mechanical model that describes the $p$-local interaction of $N$ Majorana fermions. These Majorana fermions $\psi_1, \psi_2, \ldots, \psi_N$ obey the anti-commutation relation $\{ \psi_i, \psi_j \} = 2 \delta_{i,j}$ and naturally act on the vector space with basis vectors $\psi_{i_1} \psi_{i_2} \cdots \psi_{i_m} \ket{0}$, where $1 \leq i_1 < i_2 < \cdots < i_m \leq N$ and $\ket{0}$ denotes the vacuum vector. The Hamiltonian for the SYK model is given by the following expression, where $1 \leq p \leq N$ is assumed to be even.
\[
H = i^{p/2} \sum_{1 \leq i_1 < i_2 < \cdots < i_p \leq N} J_{i_1 i_2 \cdots i_p} \, \psi_{i_1} \psi_{i_2} \cdots \psi_{i_p}
\]
Here, $\{ J_{i_1 i_2 \cdots i_p} \mid 1 \leq i_1 < i_2 < \cdots < i_p \leq N \}$ are random couplings drawn from independent Gaussian ensembles with mean 0 and variance $\binom{N}{p}^{-1}$. Thus, if we use $\langle \, \cdot \, \rangle_J$ to denote the average over the Gaussians and $I, I_1, I_2$ denote cardinality $p$ subsets of $\{1, 2, \ldots, N\}$, then
\begin{equation} \label{eq:Javerages}
\langle J_I \rangle_J = 0 \qquad \text{and} \qquad \langle J_{I_1} J_{I_2} \rangle_J = \delta_{I_1,I_2} \binom{N}{p}^{-1}.
\end{equation}

There has been a great deal of recent interest in the double-scaled Sachdev-Ye-Kitaev (DS-SYK) model, in which the parameters $N$ and $p$ approach infinity with $\lambda = \frac{2p^2}{N}$ approaching a fixed limit~\cite{cot-et-al17}. As usual, one is interested in computing the moments, the correlators and the partition function for the DS-SYK model. An initial observation is that the odd moments vanish since the expectation of a product of an odd number of Gaussians with mean zero vanishes. For the even moments, we have the following elegant result established by invoking the Wick--Isserlis theorem, where we introduce the parameter
\[
q = e^{-\lambda} = e^{-2p^2/N}.
\]

\begin{proposition} [Berkooz, Isachenkov, Narovlansky and Torrents~\cite{BINT19}] \label{prop:moments}
The even moments of the DS-SYK model in the large $N$ limit are given by
\[
\langle \, \tr H^{2k} \, \rangle_J = \sum_{k\text{-chord diagrams}} q^{\# \text{intersections}}.
\]
A $k$-chord diagram is a pairing of $2k$ points around a circle using $k$ chords. An intersection is produced by two chords whose endpoints alternate around the circle. For example, $\langle \, \tr H^6 \, \rangle_J = q^3 + 3q^2 + 6q + 5$, with contributions coming from the fifteen 3-chord diagrams, as shown in \cref{fig:chorddiagrams}.
\end{proposition}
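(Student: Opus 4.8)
The plan is to follow the Wick--Isserlis route mentioned just above the statement: first reduce the Gaussian average to a sum over chord diagrams, then evaluate the resulting Majorana traces and take the double-scaling limit. Writing $\psi_I := \psi_{i_1} \cdots \psi_{i_p}$ for $I = \{i_1 < \cdots < i_p\}$ and normalising the trace so that $\tr \mathbb{1} = 1$, I would begin by expanding
\[
\tr H^{2k} = i^{kp} \sum_{I_1, \ldots, I_{2k}} J_{I_1} \cdots J_{I_{2k}} \, \tr(\psi_{I_1} \cdots \psi_{I_{2k}}).
\]
Applying the Wick--Isserlis theorem to $\langle \, J_{I_1} \cdots J_{I_{2k}} \, \rangle_J$ and using \cref{eq:Javerages}, only the fully paired terms survive; each pairing of the $2k$ positions forces the two paired index sets to coincide and carries a weight $\binom{N}{p}^{-1}$. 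Since a pairing of $2k$ points is exactly a $k$-chord diagram, this gives
\[
\langle \, \tr H^{2k} \, \rangle_J = i^{kp} \sum_{\text{$k$-chord diagrams } \pi} \binom{N}{p}^{-k} \sum_{I_1, \ldots, I_k} \tr(\psi_{I_{c(1)}} \cdots \psi_{I_{c(2k)}}),
\]
where $c(j)$ records the chord through position $j$ and $I_1, \ldots, I_k$ are the $k$ independent index sets attached to the chords.

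The combinatorial heart is the evaluation of the Majorana trace for a fixed diagram and fixed index sets. Using $\{\psi_i, \psi_j\} = 2\delta_{i,j}$, the trace vanishes unless every Majorana index occurs an even number of times, in which case it is $\pm 1$. Bringing the two copies $\psi_{I_c}$ of each chord together, one passes $\psi_{I_c}$ through every block $\psi_{I_{c'}}$ lying between them, picking up a sign $(-1)^{|I_c \cap I_{c'}|}$ precisely for the chords $c'$ whose endpoints alternate with those of $c$; nested and disjoint chords can be resolved without such crossings and contribute trivially. The leftover scalars $\psi_I^2 = (-1)^{p(p-1)/2}$ together with the prefactor $i^{kp}$ cancel to $+1$ because $p$ is even, so
\[
\tr(\psi_{I_{c(1)}} \cdots \psi_{I_{c(2k)}}) = \prod_{(c,c') \text{ crossing}} (-1)^{|I_c \cap I_{c'}|}.
\]
Thus each intersection of the diagram is marked by a signed overlap of the associated index sets. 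Verifying this sign rule carefully — in particular that only crossings, and not nestings, survive, and that the global scalar cancels — is where I expect the bookkeeping to be most delicate.

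Finally I would take the double-scaling limit $N, p \to \infty$ with $\lambda = \tfrac{2p^2}{N}$ fixed. Treating $I_1, \ldots, I_k$ as independent uniform $p$-subsets, the normalised sum computes the expectation of $\prod_{(c,c')} (-1)^{|I_c \cap I_{c'}|}$. A single pairwise overlap $|I \cap I'|$ is hypergeometric, and since each of the $p$ elements of $I'$ lands in $I$ with probability close to $p/N$, one obtains
\[
\mathbb{E}\big[(-1)^{|I \cap I'|}\big] \longrightarrow \Big(1 - \frac{2p}{N}\Big)^{p} \longrightarrow e^{-2p^2/N} = q.
\]
The main obstacle is promoting this pairwise statement to the joint claim that the full expectation factorises as $\prod_{(c,c')} \mathbb{E}[(-1)^{|I_c \cap I_{c'}|}] \to q^{\#\text{intersections}(\pi)}$. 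The difficulty is that a single index set $I_c$ enters several crossing factors at once, so one must show that in the dilute double-scaling regime the overlaps across distinct chord pairs become asymptotically independent, with triple and higher joint overlaps suppressed in powers of $1/N$. A cumulant or moment estimate on the hypergeometric overlaps should control these corrections, after which summing over all $k$-chord diagrams yields exactly $\sum_\pi q^{\#\text{intersections}}$, as claimed.
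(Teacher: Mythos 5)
The paper does not actually prove this proposition --- it imports it from Berkooz--Isachenkov--Narovlansky--Torrents~\cite{BINT19}, remarking only that it is ``established by invoking the Wick--Isserlis theorem'' --- and your outline is precisely that standard argument: Wick contraction of the Gaussian couplings to reduce to chord diagrams, Clifford-algebra bookkeeping showing that commuting blocks $\psi_{I_c}$ past one another yields a sign $(-1)^{|I_c \cap I_{c'}|}$ exactly for crossing chords (with the scalars $\psi_I^2$ cancelling against $i^{kp}$ since $p$ is even), and the dilute-limit evaluation $\mathbb{E}\big[(-1)^{|I\cap I'|}\big] \to e^{-2p^2/N} = q$. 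Your proposal is correct and takes essentially the same route as the cited source; the two points you flag as delicate (crossings versus nestings in the sign rule, and the asymptotic factorisation of the joint overlap expectation, where higher overlaps are suppressed by powers of $N^{-1/2}$) are indeed where the work lies and are exactly the steps carried out in~\cite{BINT19}.
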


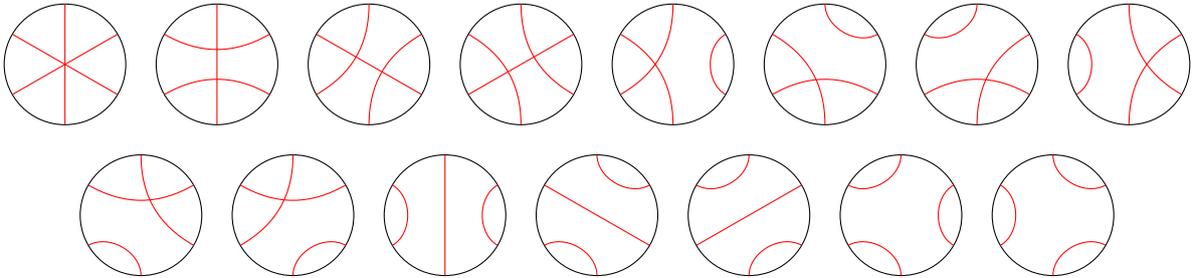
\begin{figure}[ht!]
\centering
\begin{tikzpicture}
\def\r{0.8}
\def\x{2}
\coordinate (O) at (0,0);
\coordinate (p1) at ({\r*cos(90)},{\r*sin(90)});
\coordinate (p2) at ({\r*cos(150)},{\r*sin(150)});
\coordinate (p3) at ({\r*cos(210)},{\r*sin(210)});
\coordinate (p4) at ({\r*cos(270)},{\r*sin(270)});
\coordinate (p5) at ({\r*cos(330)},{\r*sin(330)});
\coordinate (p6) at ({\r*cos(30)},{\r*sin(30)});

\begin{scope} [every coordinate/.style={shift={(-3.5*\x,0)}}]
\draw ([c]O) circle (\r);
\draw[red] ([c]p1) to[out=270,in=90] ([c]p4);
\draw[red] ([c]p2) to[out=330,in=150] ([c]p5);
\draw[red] ([c]p3) to[out=30,in=210] ([c]p6);
\end{scope}

\begin{scope} [every coordinate/.style={shift={(-2.5*\x,0)}}]
\draw ([c]O) circle (\r);
\draw[red] ([c]p1) to[out=270,in=90] ([c]p4);
\draw[red] ([c]p2) to[out=330,in=210] ([c]p6);
\draw[red] ([c]p3) to[out=30,in=150] ([c]p5);
\end{scope}

\begin{scope} [every coordinate/.style={shift={(-1.5*\x,0)}}]
\draw ([c]O) circle (\r);
\draw[red] ([c]p1) to[out=270,in=30] ([c]p3);
\draw[red] ([c]p2) to[out=330,in=150] ([c]p5);
\draw[red] ([c]p4) to[out=90,in=210] ([c]p6);
\end{scope}

\begin{scope} [every coordinate/.style={shift={(-0.5*\x,0)}}]
\draw ([c]O) circle (\r);
\draw[red] ([c]p1) to[out=270,in=150] ([c]p5);
\draw[red] ([c]p2) to[out=330,in=90] ([c]p4);
\draw[red] ([c]p3) to[out=30,in=210] ([c]p6);
\end{scope}

\begin{scope} [every coordinate/.style={shift={(0.5*\x,0)}}]
\draw ([c]O) circle (\r);
\draw[red] ([c]p1) to[out=270,in=30] ([c]p3);
\draw[red] ([c]p2) to[out=330,in=90] ([c]p4);
\draw[red] ([c]p5) to[out=150,in=210] ([c]p6);
\end{scope}

\begin{scope} [every coordinate/.style={shift={(1.5*\x,0)}}]
\draw ([c]O) circle (\r);
\draw[red] ([c]p1) to[out=270,in=210] ([c]p6);
\draw[red] ([c]p2) to[out=330,in=90] ([c]p4);
\draw[red] ([c]p3) to[out=30,in=150] ([c]p5);
\end{scope}

\begin{scope} [every coordinate/.style={shift={(2.5*\x,0)}}]
\draw ([c]O) circle (\r);
\draw[red] ([c]p1) to[out=270,in=330] ([c]p2);
\draw[red] ([c]p3) to[out=30,in=150] ([c]p5);
\draw[red] ([c]p4) to[out=90,in=210] ([c]p6);
\end{scope}

\begin{scope} [every coordinate/.style={shift={(3.5*\x,0)}}]
\draw ([c]O) circle (\r);
\draw[red] ([c]p1) to[out=270,in=150] ([c]p5);
\draw[red] ([c]p2) to[out=330,in=30] ([c]p3);
\draw[red] ([c]p4) to[out=90,in=210] ([c]p6);
\end{scope}

\begin{scope} [every coordinate/.style={shift={(-3*\x,-\x)}}]
\draw ([c]O) circle (\r);
\draw[red] ([c]p1) to[out=270,in=150] ([c]p5);
\draw[red] ([c]p2) to[out=330,in=210] ([c]p6);
\draw[red] ([c]p3) to[out=30,in=90] ([c]p4);
\end{scope}

\begin{scope} [every coordinate/.style={shift={(-2*\x,-\x)}}]
\draw ([c]O) circle (\r);
\draw[red] ([c]p1) to[out=270,in=30] ([c]p3);
\draw[red] ([c]p2) to[out=330,in=210] ([c]p6);
\draw[red] ([c]p4) to[out=90,in=150] ([c]p5);
\end{scope}

\begin{scope} [every coordinate/.style={shift={(-1*\x,-\x)}}]
\draw ([c]O) circle (\r);
\draw[red] ([c]p1) to[out=270,in=90] ([c]p4);
\draw[red] ([c]p2) to[out=330,in=30] ([c]p3);
\draw[red] ([c]p5) to[out=150,in=210] ([c]p6);
\end{scope}

\begin{scope} [every coordinate/.style={shift={(0*\x,-\x)}}]
\draw ([c]O) circle (\r);
\draw[red] ([c]p1) to[out=270,in=210] ([c]p6);
\draw[red] ([c]p2) to[out=330,in=150] ([c]p5);
\draw[red] ([c]p3) to[out=30,in=90] ([c]p4);
\end{scope}

\begin{scope} [every coordinate/.style={shift={(1*\x,-\x)}}]
\draw ([c]O) circle (\r);
\draw[red] ([c]p1) to[out=270,in=330] ([c]p2);
\draw[red] ([c]p3) to[out=30,in=210] ([c]p6);
\draw[red] ([c]p4) to[out=90,in=150] ([c]p5);
\end{scope}

\begin{scope} [every coordinate/.style={shift={(2*\x,-\x)}}]
\draw ([c]O) circle (\r);
\draw[red] ([c]p1) to[out=270,in=330] ([c]p2);
\draw[red] ([c]p3) to[out=30,in=90] ([c]p4);
\draw[red] ([c]p5) to[out=150,in=210] ([c]p6);
\end{scope}

\begin{scope} [every coordinate/.style={shift={(3*\x,-\x)}}]
\draw ([c]O) circle (\r);
\draw[red] ([c]p1) to[out=270,in=210] ([c]p6);
\draw[red] ([c]p2) to[out=330,in=30] ([c]p3);
\draw[red] ([c]p4) to[out=90,in=150] ([c]p5);
\end{scope}
\end{tikzpicture}
\caption{The fifteen 3-chord diagrams, which contribute to the calculation of $\langle \, \tr H^6 \, \rangle_J = q^3 + 3q^2 + 6q + 5$.}
\label{fig:chorddiagrams}
\end{figure}

\cref{prop:moments} suggests that there may be interesting combinatorics underlying DS-SYK correlators that may propagate to their large $N$ expansions. However, such combinatorics is yet to be well understood~\cite{BBNR21}.

\subsection{Okuyama's spectral curve}

Jafferis, Kolchmeyer, Mukhametzhanov and Sonner introduced an $N \times N$ Hermitian one-matrix model whose moments match those of the DS-SYK model described by \cref{prop:moments}~\cite{JKMS23}. They furthermore argued that the connected correlators for these two models agree to leading order -- that is, at genus zero in the large $N$ expansion. The potential for the matrix model has the following explicit form, where $T_{2n}$ denotes the Chebyshev polynomial of the first kind.
\[
V(x) = \sum_{n=1}^\infty \frac{(-1)^{n+1}}{n} \big( q^{n(n+1)/2} + q^{n(n-1)/2} \big) \, T_{2n} \big( \tfrac{\sqrt{1-q}}{2} x \big)
\]

Hermitian one-matrix models are known to be governed by the topological recursion when the potential is polynomial and conjecturally in many other cases~\cite{eyn-ora09}. Thus, Okuyama was motivated to introduce the following rational spectral curve\footnote{The spectral curve presented here is actually a rescaled version of that introduced by Okuyama, which takes the form $\widetilde{x}(z) = \frac{1}{\sqrt{1-q}} \, x(z)$ and $\widetilde{y}(z) = \sqrt{1-q} \, (q;q)_\infty^3 \, y(z)$. Our rescaling has little bearing on the mathematics and is mainly for notational convenience. Furthermore, observe that $y(z)$ here is not a rational function -- we will have more to say on this in \cref{subsec:qzeta}.} to compute the correlators of the matrix model~\cite{oku23}.
\begin{equation} \label{eq:okuyama}
x(z) = z+z^{-1} \qquad \qquad y(z) = \frac{1}{2}(z-z^{-1}) \prod_{k=1}^\infty \frac{(1-q^kz^2) \, (1-q^kz^{-2})}{(1-q^k)^2}
\end{equation}

Okuyama applies the topological recursion to this spectral curve and expands at $z_i = 0$ to obtain
\[
\omega_{g,n}(z_1, \ldots, z_n) = \sum_{b_1, \ldots, b_n=1}^\infty N^q_{g,n}(b_1, \ldots, b_n) \prod_{i=1}^n b_i z_i^{b_i-1} \, \dd z_i.
\]
The quantities $N^q_{g,n}(b_1, \ldots, b_n)$ -- which we will refer to as {\em Okuyama's discrete volumes} -- enter into the large~$N$ expansion of connected correlators for the matrix model via the following formulas, where $(q; q)_\infty$ is a $q$-Pochhammer symbol and $I_\alpha$ denotes the modified Bessel function of the first kind.
\begin{align*}
\big\langle \tr e^{\beta_1M} \cdots \tr e^{\beta_nM} \big\rangle^c &= \sum_{g=0}^\infty \frac{1}{N^{2g-2+n}} Z_{g,n}(\beta_1, \ldots, \beta_n) \\
Z_{g,n}(\beta_1, \ldots, \beta_n) &= \frac{1}{(q;q)_\infty^{6g-6+3n}} \sum_{b_1, \ldots, b_n=1}^\infty N^q_{g,n}(b_1, \ldots, b_n) \prod_{i=1}^n b_i \, I_{b_i} \big( \tfrac{2\beta_i}{\sqrt{1-q}} \big)
\end{align*}

The analogous DS-SYK connected correlators also admit a large $N$ expansion of the following form.
\[
\big\langle \tr e^{-\beta_1H} \cdots \tr e^{-\beta_nH} \big\rangle_J^c = \sum_{g=0}^\infty \frac{1}{N^{2g-2+n}} \widetilde{Z}_{g,n}(\beta_1, \ldots, \beta_n)
\]
Okuyama states that although ``there is no obvious reason to expect the agreement\ldots~it is tempting to speculate that these two computations actually agree at all genera''~\cite{oku23} -- in other words, that for all $g \geq 0$, we have $\widetilde{Z}_{g,n}(\beta_1, \ldots, \beta_n) = Z_{g,n}(\beta_1, \ldots, \beta_n)$. The universality and naturality of the topological recursion could be considered as evidence towards this claim.

The spectral curve of \cref{eq:okuyama} possesses the obvious symmetries $x(z) = x(z^{-1}) = -x(-z)$ and $y(z) = -y(z^{-1}) = -y(-z)$. This leads to the following symmetry for the stable correlation differentials.
\begin{equation} \label{eq:omegasymmetries}
\omega_{g,n}(z_1, \ldots, z_i, \ldots, z_n) = - \omega_{g,n}(z_1, \ldots, z_i^{-1}, \ldots, z_n)
\end{equation}
We furthermore have the following result, which continues to apply to the spectral curve of \cref{eq:okuyama}, despite $y(z)$ not being rational.

\begin{theorem} [Norbury and Scott~\cite{nor-sco13}] \label{thm:norsco}
For a rational spectral curve with $x(z) = \alpha + \gamma(z + z^{-1})$ and $y(z)$ a rational function with $y'(1) \neq 0$ and $y'(-1) \neq 0$, the stable correlation differentials satisfy
\[
\omega_{g,n}(z_1, \ldots, z_n) = \sum_{b_1, \ldots, b_n=1}^\infty N_{g,n}(b_1, \ldots, b_n) \prod_{i=1}^n b_i z_i^{b_i-1} \, \dd z_i,
\]
where $N_{g,n}(b_1, \ldots, b_n)$ is a symmetric quasi-polynomial in $b_1^2, \ldots, b_n^2$ of degree $3g-3+n$.
\end{theorem}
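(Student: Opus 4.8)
The plan is to exploit the fact that, in the recursion \eqref{eq:TR}, the entire dependence on the distinguished variable $z_1$ enters only through the numerator $\tfrac12\int_p^{\sigma_\alpha(p)}\omega_{0,2}(p_1,\cdot)$ of the kernel. Since $x$ satisfies $x(z)=x(z^{-1})$, its two branch points are $z=\pm1$ and the local involution at each is the global map $\sigma(z)=z^{-1}$. Using the Bergman kernel \eqref{eq:bergman}, a direct integration and expansion at $z_1=0$ give the identity
\[
\tfrac12\int_z^{1/z} B(z_1,z') = \sum_{b=1}^\infty \tfrac12\,(z^{-b}-z^b)\,z_1^{b-1}\,\dd z_1,
\]
whose decisive feature is that the coefficient of $z_1^{b-1}\,\dd z_1$ is the odd function $\tfrac12(z^{-b}-z^b)$ of $b$. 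Because the branch points lie only at $z=\pm1$, \cref{prop:TRproperties} ensures that $\omega_{g,n}$ is holomorphic in each $z_i$ at $z_i=0$, so the asserted expansion is simply its Taylor expansion there and the sum starts at $b_i=1$.

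Substituting this identity into \eqref{eq:TR} and reading off the coefficient of $z_1^{b_1-1}\,\dd z_1$ yields
\[
b_1\!\!\sum_{b_2,\ldots,b_n\geq1}\!\!N_{g,n}(b_1,\ldots,b_n)\prod_{i=2}^n b_i z_i^{b_i-1}\,\dd z_i = \frac12\sum_{\alpha=\pm1}\Res_{z=\alpha}\frac{z^{-b_1}-z^{b_1}}{\omega_{0,1}(z)-\omega_{0,1}(\sigma z)}\Big[\omega_{g-1,n+1}(z,\sigma z,\pp_S)+\textstyle\sum^\circ\omega\,\omega\Big].
\]
Every appearance of $b_1$ on the right is confined to the factor $z^{-b_1}-z^{b_1}$, which is odd under $b_1\mapsto-b_1$; hence the right-hand side is odd in $b_1$, forcing $b_1 N_{g,n}$ to be odd and $N_{g,n}$ to be even in $b_1$. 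Note that this uses only $x(z)=x(z^{-1})$, not any symmetry of $y$. Expanding $z^{\pm b_1}$ about $z=1$ gives Taylor coefficients that are polynomial in $b_1$, and the analogous expansion about $z=-1$ differs only by an overall factor $(-1)^{b_1}$; hence the residues produce a quasi-polynomial in $b_1$, which by evenness is a quasi-polynomial in $b_1^2$. Since the correlation differentials are symmetric (\cref{prop:TRproperties}), the same holds in every variable and $N_{g,n}$ is a symmetric quasi-polynomial in $b_1^2,\ldots,b_n^2$.

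There remains the degree, which I regard as the crux. The hypotheses $y'(\pm1)\neq0$ guarantee that each branch point is simple and that $\omega_{0,1}(z)-\omega_{0,1}(\sigma z)$ vanishes to the expected order, so the kernel has standard local behaviour. The per-variable bound is immediate: a differential with poles only at $\pm1$ of order at most $m$ expands at $z=0$ into coefficients of degree $m-1$ in $b$, so after dividing by $b$ one loses a degree, and \cref{prop:TRproperties} with $m=6g-4+2n$ gives degree at most $6g-6+2n$ in each $b_i$, that is, $3g-3+n$ in $b_i^2$. The sharper statement that the \emph{total} degree is $3g-3+n$ I would obtain by induction on $2g-2+n$, with base cases $\omega_{0,3}$ and $\omega_{1,1}$ computed directly, showing that $\omega_{g,n}$ lies in the span of products $\prod_i\dd\xi^{\alpha_i}_{j_i}(z_i)$ of kernel-generated basic differentials subject to $\sum_i j_i\leq3g-3+n$, where $\dd\xi^\alpha_j$ contributes an even polynomial of $b^2$-degree $j$.

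The main obstacle is precisely this total-degree bookkeeping. In the recursion the lower correlators are evaluated at the coincident pair $(z,\sigma z)$, both of which approach the \emph{same} branch point as $z\to\alpha$, so I must control the order of the pole that arises in this coincidence limit and verify that taking the residue raises the available $b^2$-degree by exactly $2$ --- matching the jump from $3(g-1)-3+(n+1)$, or from a splitting $(g_1,n_1)\sqcup(g_2,n_2)$, up to $3g-3+n$. Establishing a grading on the basic differentials that is preserved by the kernel, and checking it against this coincidence-limit analysis, is the technical heart of the proof; once it is in place the degree claim follows and, combined with the evenness and symmetry above, completes the argument.
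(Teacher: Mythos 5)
A preliminary remark: the paper does not actually prove this statement --- it is imported wholesale from Norbury--Scott \cite{nor-sco13} --- so your proposal can only be compared against the paper's closest internal analogue, namely \cref{prop:preproof1}, \cref{lem:equality} and the proof of \cref{thm:Mqzeta}, which establish the same kind of structure in the graded setting with $\zeta$-dependent coefficients. Measured against that, your first two paragraphs are correct and use the same underlying mechanism: the $z_1$-dependence of the recursion sits entirely in the kernel numerator, whose coefficient $\tfrac12(z^{-b_1}-z^{b_1})$ is odd in $b_1$; the Laurent expansion of the remaining ($b_1$-independent) factor at $z=1$ produces an odd polynomial in $b_1$, and at $z=-1$ an odd polynomial times $(-1)^{b_1}$, so $b_1N_{g,n}$ is an odd quasi-polynomial and $N_{g,n}$ is a quasi-polynomial in $b_1^2$. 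Your observation that only the symmetry of $x$ is used here is accurate and matches the generality of the hypotheses. (One small point worth making explicit: passing from ``quasi-polynomial in $b_1^2$ for each fixed $b_2,\ldots,b_n$, in every variable by symmetry'' to \emph{joint} quasi-polynomiality needs the uniform per-variable degree bound from \cref{prop:TRproperties} plus Lagrange interpolation, or alternatively the partial-fraction decomposition below; as written this is glossed over.)

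The genuine gap is the one you flag yourself: the total degree bound $3g-3+n$ is asserted with a strategy but never proved, and since the degree statement is half the theorem, the proposal is incomplete. The missing induction is, however, exactly the one the paper runs for \cref{prop:preproof1}, and it transfers once the $\zeta$-grading is dropped: show by induction that $\omega_{g,n}$ lies in the span of $\prod_i \dd z_i/(z_i-a_i)^{k_i+2}$ with $a_i=\pm1$ and $\sum k_i\le 6g-6+2n$ (no simple poles by the vanishing of residues, no other poles by \cref{prop:TRproperties}). Your two worries are resolved as follows. For the coincidence pair $(z,\sigma z)$: substituting $z\mapsto z^{-1}$ into a basis element gives $\dd(z^{-1})/(z^{-1}-a)^{k+2} = c\, z^{k}\,\dd z/(z-a)^{k+2}$ with $c$ a constant, and $z^k$ is holomorphic and nonvanishing at $z=a$, so pole orders (hence degrees) are preserved; consequently setting the two arguments of $\omega_{g-1,n+1}$ equal raises the total degree by exactly $2$, giving $6(g-1)-6+2(n+1)+2=6g-8+2n$, just as in the paper. (Alternatively, the skew-symmetry $\omega_{g,n}(z_1^{-1},\ldots)=-\omega_{g,n}(z_1,\ldots)$ of \cref{eq:omegasymmetries} holds for \emph{any} rational $y$: the linear loop equation makes the sum $\omega_{g,n}(z_1,\cdot)+\omega_{g,n}(z_1^{-1},\cdot)$ holomorphic at $z_1=\pm1$, hence holomorphic on all of $\mathbb{CP}^1$, hence zero.) For the kernel: the hypotheses $y'(\pm1)\neq0$ force $\big(y(z)-y(z^{-1})\big)x'(z)$ to vanish to order exactly $2$ at $z=\pm1$, so its reciprocal has Laurent expansion beginning at order $-2$, and multiplying by it and taking principal parts raises the total degree by at most $2$ --- the ungraded analogue of the bound on the $\zeta$-degree of $A^{\pm}_m$ in the paper. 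The $\omega_{0,2}$ terms are handled exactly as on the third line of \cref{eq:TRsimple}. This closes the induction and yields total degree at most $3g-3+n$ in $b_1^2,\ldots,b_n^2$. Finally, if ``of degree $3g-3+n$'' is read as exact equality, one must also show the top coefficients do not vanish; this follows from the Airy local model at the branch points (cf.\ the discussion around \cref{eq:airycurve}), whose leading coefficients are $\psi$-class intersection numbers, and is not addressed in your proposal at all.
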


\subsection{Weil--Petersson volumes}

The moduli space ${\mathcal M}_{g,n}(L_1, \ldots, L_n)$ of hyperbolic surfaces with $n$ geodesic boundaries of lengths $L_1, \ldots, L_n$ carries a symplectic structure through the Weil--Petersson symplectic form $\omega_{WP}$. Mirzakhani showed that the corresponding Weil--Petersson volume
\[
V_{g,n}(L_1, \ldots, L_n) = \int_{{\mathcal M}_{g,n}(L_1, \ldots, L_n)} \frac{\omega_{WP}^{3g-3+n}}{(3g-3+n)!}
\]
is a polynomial in $L_1^2, \ldots, L_n^2$ of degree $3g-3+n$ and that the coefficients are intersection numbers on the Deligne--Mumford compactification $\overline{\mathcal M}_{g,n}$ of the moduli space of curves~\cite{mir07b}. Furthermore, Mirzakhani provided the following recursion from which one can explicitly calculate all such Weil--Petersson volumes~\cite{mir07a}.

\begin{itemize}
\item {\em Base cases.} The Weil--Petersson volumes for $(g,n) = (0,1), (0,2), (0,3)$ and $(1,1)$ are as follows.
\[
V_{0,1}(L_1) = 0 \qquad V_{0,2}(L_1, L_2) = 0 \qquad V_{0,3}(L_1, L_2, L_3) = 1 \qquad V_{1,1}(L_1) = \frac{1}{48} L_1^2 + \frac{\pi^2}{12}
\]

\item {\em Recursion.} Every other Weil--Petersson volume $V_{g,n}(L_1, \ldots, L_n)$ satisfies the following recursion. Here, we use the notation $S = \{2, 3, \ldots, n\}$ and write $\LL_I = (L_{i_1}, L_{i_2}, \ldots, L_{i_m})$ for $I = \{i_1, i_2, \ldots, i_m\}$. We also define the recursion kernel $H(x, y) = (1 + \exp \frac{x+y}{2})^{-1} + (1 + \exp \frac{x-y}{2})^{-1}$.
\begin{align} \label{eq:mirzakhani}
\frac{\partial}{\partial L_1} L_1 V_{g,n}(L_1, \LL_S) &= \frac{1}{2} \int_0^\infty \!\!\! \int_0^\infty xy \, H(x+y, L_1) \, V_{g-1,n+1}(x, y, \LL_S) \, \dd x \, \dd y \notag \\
&+ \frac{1}{2} \sum_{\substack{g_1+g_2=g \\ I \sqcup J = S}} \int_0^\infty \!\!\! \int_0^\infty xy \, H(x+y,L_1) \, V_{g_1,|I|+1}(x, \LL_I) \, V_{g_2,|J|+1}(y, \LL_J) \, \dd x \ \dd y \notag \\
&+ \frac{1}{2} \sum_{k=2}^n \int_0^\infty x \big[ H(x, L_1+L_k) + H(x, L_1-L_k) \big] \, V_{g,n-1}(x, \LL_{S \setminus \{k\}}) \, \dd x
\end{align}

\item {\em Evaluation.} The integrals appearing in the recursion above can be evaluated using the formulas
\begin{align*}
\int_0^\infty x^{2k+1} \, H(x,t) \, \dd x &= F_{2k+1}(t) \\
\int_0^\infty \!\!\! \int_0^\infty x^{2a+1} y^{2b+1} \, H(x+y, t) \, \dd x \, \dd y &= \frac{(2a+1)! \, (2b+1)!}{(2a+2b+3)!} \, F_{2a+2b+3}(t),
\end{align*}
where $F_{2k+1}(t) = (2k+1)! \, \displaystyle\sum_{i=0}^{k+1} (2^{2i+1}-4) \, \zeta(2i) \, \frac{t^{2k+2-2i}}{(2k+2-2i)!}$.
\end{itemize}

\cref{app:WPvolumes} contains a table of Weil--Petersson volumes $V_{g,n}(L_1, \ldots, L_n)$ for some small values of $g$ and~$n$. For an introduction to Weil--Petersson volumes, particularly with regards to Mirzakhani's recursion, see the survey article from the Handbook of Moduli~\cite{do13}.

Mirzakhani's recursion in \cref{eq:mirzakhani} bears a strong resemblance to the topological recursion in \cref{eq:TR}, with the respective terms being essentially in one-to-one correspondence. Thus, it is natural to surmise that Mirzakhani's recursion is an instance of the topological recursion on a particular spectral curve and this is indeed the case.

\begin{theorem}[Eynard and Orantin~\cite{eyn-ora07b}]
Topological recursion applied to the rational spectral curve\footnote{Although $y(t)$ is not a rational function, one can make sense of the topological recursion by considering the sequence of rational functions $y^M(t) = \displaystyle\sum_{k=0}^M \frac{(-1)^k (2\pi)^{2k}}{(2k+1)!} t^{2k+1}$. For fixed $(g,n)$, the correlation differential $\omega_{g,n}$ stabilises at finite $M$.}
\begin{equation} \label{eq:WPvolumes}
x(t) = \frac{t^2}{2} \qquad \text{and} \qquad y(t) = \frac{1}{2\pi} \sin(2\pi t)
\end{equation}
produces stable correlation differentials that satisfy
\[
\frac{\omega^{\mathrm{WP}}_{g,n}(t_1, \ldots, t_n)}{\dd t_1 \cdots \dd t_n} = \int_0^\infty \!\!\! \cdots \! \int_0^\infty V_{g,n}(L_1, \ldots, L_n) \prod_{i=1}^n L_i e^{-t_i L_i} \, \dd L_i.
\]
\end{theorem}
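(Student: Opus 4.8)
The plan is to prove the statement by induction on the negative Euler characteristic $2g-2+n$, showing that applying the Laplace transform $f \mapsto \int_0^\infty f(L)\, L\, e^{-tL}\, \dd L$ to Mirzakhani's recursion \cref{eq:mirzakhani} turns it, term by term, into the topological recursion \cref{eq:TR} for the spectral curve \cref{eq:WPvolumes}. First I would record the geometry of the curve: since $\dd x = t\,\dd t$, the only branch point is $\alpha = 0$ with local involution $\sigma(t) = -t$, and $\omega_{0,1} = y\,\dd x = \tfrac{t}{2\pi}\sin(2\pi t)\,\dd t$ while $\omega_{0,2} = B = \tfrac{\dd t_1\,\dd t_2}{(t_1-t_2)^2}$. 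Substituting into \cref{eq:recursionkernel} and using that $y$ is odd gives a recursion kernel of the form
\[
K(t_1,t) = -\frac{\pi}{(t_1^2 - t^2)\,\sin(2\pi t)}\,\frac{\dd t_1}{\dd t},
\]
which is invariant under $t \mapsto -t$ and whose Laurent expansion at $t = 0$ is governed by the expansion of $\tfrac{1}{\sin(2\pi t)}$ in Bernoulli numbers. The base cases are then checked directly: computing $\omega_{0,3}$ and $\omega_{1,1}$ from the recursion and comparing with the Laplace transforms of $V_{0,3} = 1$ and $V_{1,1} = \tfrac{1}{48}L_1^2 + \tfrac{\pi^2}{12}$ anchors the induction.

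The heart of the argument is a kernel-matching lemma identifying Mirzakhani's integral operations with the residue operation in \cref{eq:TR}. Each of the three families of terms in \cref{eq:mirzakhani} is Laplace transformed in $L_1$: the prefactor $\tfrac{\partial}{\partial L_1}L_1$ is handled by integration by parts, introducing the factor of $t_1$ that is absorbed by the $t_1$-dependence of $K$, while the evaluation formulas $\int_0^\infty x^{2k+1}H(x,t)\,\dd x = F_{2k+1}(t)$ and its two-variable analogue convert the building blocks of the inductive hypothesis into rational functions of $t_1$ with even-order poles at $t_1 = 0$. I would then establish a central identity of the schematic shape (suppressing $\dd t_1$), for each $a,b \geq 0$,
\[
\mathop{\mathrm{Res}}_{t=0} K(t_1,t)\,\frac{1}{t^{2a+2}}\,\frac{1}{t^{2b+2}} = \int_0^\infty \! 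L_1\, e^{-t_1 L_1} \left[ \frac{1}{(2a+1)!\,(2b+1)!} \int_0^\infty \!\! \int_0^\infty x^{2a+1} y^{2b+1} H(x+y, L_1) \, \dd x \, \dd y \right] \dd L_1,
\]
together with its one-variable counterpart for the boundary sum. Because $\omega_{g',n'}$ is, by the inductive hypothesis, the Laplace transform of a polynomial in $L^2$, its expansion at $t = 0$ is a finite Laurent tail in the powers $t^{-2j-2}$, so these identities, summed over $a,b$ and over the stable splittings, reassemble precisely the bracketed sum in \cref{eq:TR} evaluated at $t$ and $\sigma(t) = -t$.

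The analytic core of this lemma is the coincidence of two families of coefficients. On the Mirzakhani side the numbers $(2^{2i+1}-4)\,\zeta(2i)$ enter through $F_{2k+1}$, arising from the Mellin transform of the Fermi--Dirac-type kernel $H$; on the topological recursion side the Laurent coefficients of $\tfrac{1}{\sin(2\pi t)}$ are, up to elementary factors, the Bernoulli numbers $B_{2i}$, with the relevant combination being $(2^{2i}-2)B_{2i}$. The bridge is the classical evaluation $\zeta(2i) = \tfrac{(-1)^{i+1}(2\pi)^{2i}B_{2i}}{2\,(2i)!}$, together with the relation $2^{2i+1}-4 = 2(2^{2i}-2)$, which I would use to show that extracting the residue against $K$ reproduces exactly the coefficients appearing in $F_{2k+1}$. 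It is precisely this step, together with the careful bookkeeping that the residue at $t$ (with its paired insertion at $-t$ from $\omega_{g-1,n+1}(t,-t,\ldots)$ and from the product terms) matches the doubled structure of Mirzakhani's kernel $H(x,L_1+L_k)+H(x,L_1-L_k)$, that I expect to be the main obstacle.

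Finally, the non-rationality of $y(t) = \tfrac{1}{2\pi}\sin(2\pi t)$ must be addressed. By \cref{prop:TRproperties} the differential $\omega_{g,n}$ has a pole of order at most $6g-4+2n$ at the branch point, so only finitely many Taylor coefficients of $y$ can contribute to any given residue; consequently the polynomial truncations $y^M$ of the footnote stabilise at finite $M$, and the manipulations above are legitimate term by term. This justifies treating the series defining $y$ formally and confirms that the limiting correlation differentials are exactly the Laplace transforms of the Weil--Petersson volumes, completing the induction.
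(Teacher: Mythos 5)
The paper does not actually prove this theorem: it is imported as a known result, with the citation to Eynard and Orantin~\cite{eyn-ora07b} standing in for the proof. Your strategy is nonetheless correct and is essentially the standard one — Laplace-transforming Mirzakhani's recursion \cref{eq:mirzakhani} term by term, matching the integral operators against the residue/kernel structure of topological recursion on the curve \cref{eq:WPvolumes}, with the identity $\zeta(2i) = \frac{(-1)^{i+1}(2\pi)^{2i}B_{2i}}{2\,(2i)!}$ bridging the coefficients $(2^{2i+1}-4)\,\zeta(2i)$ of $F_{2k+1}$ to the Laurent coefficients of $1/\sin(2\pi t)$ — and it coincides in structure with the proof the paper gives for the $q$-deformed analogue, \cref{specqvol}: there \cref{lem:dint,lem:sint} play exactly the role of your kernel-matching lemma, converting the double and single integrals against the kernel into principal-part extractions against $z^{-1}\exp\big(\sum_j \tfrac{\zeta_q(2j)}{j}(4z^2)^j\big)$, which is precisely the $q$-analogue of your $\pi/\big(t\sin(2\pi t)\big)$ since $\exp\big(\sum_k \tfrac{\zeta(2k)}{k}(2z)^{2k}\big) = \frac{2\pi z}{\sin(2\pi z)}$, and the argument is closed by the same induction with base case $\omega_{1,1}$ and a uniqueness statement for the recursion.
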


\section{Discrete volumes} \label{sec:proofs}

In this section, we present the proofs of \cref{thm:qzeta} concerning the appearance of $q$-zeta values in Okuyama's discrete volumes and \cref{thm:WPvolumes} regarding the Weil--Petersson volumes arising in the $q \to 1$ limit.

\subsection{The appearance of \texorpdfstring{$q$}{q}-zeta values} \label{subsec:qzeta}

In its original form described in \cref{subsec:TR}, the topological recursion requires two meromorphic functions $x, y: \mathcal{C} \to \mathbb{C}$ as part of the input data~\cite{che-eyn06,eyn-ora07a}. Observe however that the function $y$ appearing in Okuyama's spectral curve -- see \cref{eq:okuyama} -- is not rational. One can still make sense of the situation in various ways. We propose to understand Okuyama's spectral curve as the limit of the following sequence of rational spectral curves indexed by a positive integer $M$.
\begin{equation} \label{eq:Mspectralcurve}
x(z) = z+z^{-1} \qquad \qquad y^M(z) = \frac{1}{2} (z-z^{-1}) \prod_{k=1}^M \frac{(1-q^kz^2) \, (1-q^kz^{-2})}{(1-q^k)^2}
\end{equation}
We will subsequently use the superscript $M$ to indicate quantities derived from this rational spectral curve, such as writing the corresponding stable correlation differentials as
\begin{equation} \label{eq:omegaMexpansion}
\omega_{g,n}^M(z_1, \ldots, z_n) = \sum_{b_1, \ldots, b_n=1}^\infty N^M_{g,n}(b_1, \ldots, b_n) \prod_{i=1}^n b_i z_i^{b_i-1} \, \dd z_i.
\end{equation}

Truncating the product appearing in $y(z)$ leads to expressions that involve the truncated $q$-zeta function
\begin{equation} \label{eq:zetaqN}
\zqm(s) = \sum_{m=1}^M \frac{q^{ms/2}}{(1 - q^m)^s}.
\end{equation}
Thus, \cref{thm:qzeta} is a direct consequence of the following result.

\begin{theorem} \label{thm:Mqzeta}
Let $M$ be a positive integer and assume that $(g,n) \neq (0,1)$ or $(0,2)$.
\begin{enumerate}[label=(\alph*)]
\item Then $N^M_{g,n}(b_1, \ldots, b_n)$ is a quasi-polynomial in $b_1^2, \ldots, b_n^2$. 

\item Each underlying polynomial of $N^M_{g,n}(b_1, \ldots, b_n)$ is an element of the graded ring
\[
\mathbb{Q}[\zqm(2), \zqm(4), \zqm(6), \ldots] [b_1^2, \ldots, b_n^2]
\]
of degree at most $6g-6+2n$. Here, we define the degree of $b_i^2$ to be 2 and the degree of $\zqm(2k)$ to be $2k$.

\item For a fixed parity class of $(b_1, \ldots, b_n)$, the coefficient of $\zqm(2)^{m_1} \zqm(4)^{m_2} \zqm(6)^{m_3} \cdots b_1^{2d_1} \cdots b_n^{2d_n}$ in the underlying polynomial of $N^M_{g,n}(b_1, \ldots, b_n)$ is independent of $M$.
\end{enumerate}
\end{theorem}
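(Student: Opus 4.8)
The plan is to treat part (a) as an application of Norbury--Scott's \cref{thm:norsco}, and then to isolate the single mechanism by which the parameters $q$ and $M$ enter the topological recursion — namely the product $P(z)$ in the recursion kernel — and track it through the recursion to obtain parts (b) and (c).

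\textbf{Part (a).} Okuyama's curve has $x(z)=z+z^{-1}$, which is of the form $\alpha+\gamma(z+z^{-1})$ with $\alpha=0$, $\gamma=1$, and $y^M$ is a genuine rational (Laurent) function because the product is finite. Writing $y^M=\tfrac12(z-z^{-1})P(z)$ with $P(z)=\prod_{k=1}^M\frac{(1-q^kz^2)(1-q^kz^{-2})}{(1-q^k)^2}$, one has $P(\pm1)=1$ and, since $P$ depends only on $z^2$, also $P'(\pm1)=0$; hence $y^M{}'(\pm1)=\tfrac12(1+z^{-2})\big|_{z=\pm1}=1\neq0$. So \cref{thm:norsco} applies: $N^M_{g,n}$ is a quasi-polynomial in $b_1^2,\dots,b_n^2$ of degree $3g-3+n$, which is exactly the polynomial degree claimed in (b).

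\textbf{Key expansion.} The branch points of $x$ are $z=\pm1$, the local involution is $\sigma(z)=z^{-1}$, and since $\sigma^*y^M=-y^M$ and $\sigma^*x=x$ the kernel denominator is $\omega_{0,1}-\sigma^*\omega_{0,1}=2\,y^M\,\dd x$. All $q$- and $M$-dependence enters through $P(z)$. Writing $w=z+z^{-1}$ and $s=x-2=w-2$, the identity $(1-q^kz^2)(1-q^kz^{-2})=(1+q^k)^2-q^kw^2$ shows each factor of $P$ equals $1-\frac{q^k}{(1-q^k)^2}s(4+s)$, whence
\[
\log\frac{1}{P(z)}=\sum_{j\geq1}\frac{\zqm(2j)}{j}\,\big(s(4+s)\big)^j \qquad\text{near } z=1,
\]
and the same with $s(4+s)$ replaced by $s'(s'-4)$, $s'=x+2$, near $z=-1$. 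Two consequences follow. First, the Laurent expansion of $1/P$ — and hence of the kernel and, inductively via \cref{eq:TR}, of every $\omega_{g,n}$ — at the branch points has coefficients that are $\bq$-polynomials in the $\zqm(2j)$; extracting the $N^M_{g,n}$ preserves this, giving the polynomiality asserted in (b). Second, the displayed series is a sum over \emph{all} $j\geq1$ with universal rational coefficients, the dependence on $M$ being confined entirely to the values assigned to the symbols $\zqm(2j)$.

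\textbf{Part (b), the degree bound.} Introduce the combined grading with $\deg\zqm(2k)=2k$ and, in a local coordinate $t$ at a branch point (so $s=t^2$), $\deg t=\deg\dd t=-1$, a degree-$d$ object scaling as $\epsilon^{-d}$ under $t\mapsto\epsilon t$. The leading term $\frac{\zqm(2j)}{j}(4s)^j$ of the $j$-th summand above has degree $2j-2j=0$ and the remaining terms have strictly negative degree, so $\log(1/P)$, and therefore $P$ and $1/P$, have top degree $0$. A direct computation gives $\mathcal{D}(B)=0$ and $\mathcal{D}(\omega_{0,1})=-3$, matching $6g-6+3n$ at $(0,2)$ and $(0,1)$. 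Since $1/P$ is the only carrier of $\zqm$ in the recursion and the residue operation preserves degree, an induction on $2g-2+n$ along \cref{eq:TR} — identical in form-degree bookkeeping to the homogeneous (Weil--Petersson-type) curve obtained by keeping only the top part of $P$, the lower-degree corrections only decreasing the degree — yields $\mathcal{D}(\omega_{g,n})\leq6g-6+3n$. Converting pole order at $z=\pm1$ into the growth of $N^M_{g,n}$ in the $b_i$ as in \cref{thm:norsco} costs one unit of form-degree per variable, giving the combined bound $\deg N^M_{g,n}\leq6g-6+2n$. For part (c), run the recursion symbolically with $\zqm(2),\zqm(4),\dots$ treated as independent formal variables via the expansion of $\log(1/P)$ above: because that expansion has the same universal coefficients for every $M$, the recursion outputs one and the same formal quasi-polynomial in the $\zqm(2j)$ and the $b_i$, of which each $N^M_{g,n}$ is the specialisation; hence every coefficient $\zqm(2)^{m_1}\zqm(4)^{m_2}\cdots b_1^{2d_1}\cdots b_n^{2d_n}$ is independent of $M$.

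\textbf{Main obstacle.} The technical heart is the degree bound in (b): making the combined grading rigorous on \cref{eq:TR}, in particular assigning a consistent degree to the bi-differential kernel built from $1/\omega_{0,1}$, and checking that $\Res_{p=\pm1}$ is degree-preserving in the $p$-slot, so that the single input $\deg(1/P)\leq0$ propagates to $\mathcal{D}(\omega_{g,n})\leq6g-6+3n$. Parts (a) and (c), together with the polynomiality half of (b), are comparatively direct once the expansion of $\log(1/P)$ is in hand.
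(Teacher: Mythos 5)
Your proposal is correct and follows essentially the same route as the paper: part (a) via \cref{thm:norsco}, the logarithmic expansion of the kernel in terms of $\zqm(2j)$ with universal coefficients (the paper's \cref{lem:expansions}, which you phrase in the variable $s=x-2$ rather than $z-1$), an induction along the topological recursion to bound the combined degree, and $M$-independence of the formal coefficients for part (c). Your scaling grading with $\deg t = \deg \dd t = -1$ is just a repackaging of the paper's explicit induction in the spaces $\Lambda^M_{6g-6+2n}$ (pole-order excess plus $\zeta$-degree), and the "technical heart" you flag — making the degree bookkeeping rigorous through the residue/principal-part operation — is precisely what the paper's \cref{prop:preproof1} carries out.
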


The remainder of this section will be dedicated to proving \cref{thm:Mqzeta}.

Recall that the {\em principal part} of a meromorphic 1-form at a particular point is the Laurent series at that point with only the terms of negative degree retained. The following introduces the notation that we will use for principal parts, along with an expression for the principal part as a residue.
\[
\omega(z) = \sum_{k = -\infty}^\infty a_k (z-\alpha)^k \, \dd z \qquad \Rightarrow \qquad \mathcal{P}[\omega(z)]_{z=\alpha} = \sum_{k = -\infty}^{-1} a_k (z-\alpha)^k \, \dd z = \mathop{\mathrm{Res}}_{w=\alpha} \frac{\dd z}{z-w} \, \omega(w)
\]
Observe that a meromorphic 1-form on $\mathbb{CP}^1$ is the sum of the principal parts at its poles.

\begin{proposition} [Do, Leigh and Norbury~\cite{do-lei-nor16}] \label{prop:principalparts}
For a rational spectral curve, the topological recursion of \cref{eq:TR} expresses the correlation differential $\omega_{g,n}(z_1, \ldots, z_n)$ as the sum of its principal parts with respect to $z_1$ via the equation
\begin{multline*}
\omega_{g,n}(z_1, \zz_S) = \sum_{\alpha} \mathcal{P} \Bigg[ \frac{1}{\omega_{0,1}(z_1) - \omega_{0,1}(\sigma_\alpha(z_1))} \bigg( \omega_{g-1,n+1}(z_1, \sigma_\alpha(z_1), \zz_S) \\
+ \sum_{\substack{g_1+g_2=g \\ I \sqcup J = S}}^\circ \omega_{g_1,|I|+1}(z_1, \zz_I) \, \omega_{g_2,|J|+1}(\sigma_\alpha(z_1), \zz_J) \bigg) \Bigg]_{z_1=\alpha}.
\end{multline*}
\end{proposition}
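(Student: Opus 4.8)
The plan is to prove the stated identity \emph{branch point by branch point}: I will show that each residue summand of the topological recursion \cref{eq:TR} coincides with the corresponding principal part, so that summing over the branch points yields the claimed formula. Write $W_{g,n}(p, \zz_S)$ for the quadratic differential appearing in the bracket of \cref{eq:TR}, and set
\[
\Omega(p) = \frac{W_{g,n}(p, \zz_S)}{\omega_{0,1}(p) - \omega_{0,1}(\sigma_\alpha(p))},
\]
which is a meromorphic one-form in $p$. The first step is to make the recursion kernel \cref{eq:recursionkernel} explicit: using the Bergman kernel \cref{eq:bergman}, the numerator integrates to
\[
\int_p^{\sigma_\alpha(p)} \omega_{0,2}(z_1, p') = \Big( \frac{1}{z_1 - \sigma_\alpha(p)} - \frac{1}{z_1 - p} \Big) \, \dd z_1,
\]
so that $\mathop{\mathrm{Res}}_{p=\alpha} K_\alpha(z_1, p) \, W_{g,n}(p, \zz_S)$ becomes $\tfrac{1}{2}\dd z_1$ times the difference of the two residues $\mathop{\mathrm{Res}}_{p=\alpha} \frac{\Omega(p)}{z_1 - \sigma_\alpha(p)}$ and $\mathop{\mathrm{Res}}_{p=\alpha} \frac{\Omega(p)}{z_1 - p}$.

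The second step, which I expect to be the crux, is to combine these two residues using the local involution $\sigma_\alpha$. Since $\omega_{g,n}$ is symmetric and the bracket is invariant under swapping $p$ with $\sigma_\alpha(p)$, and since $\sigma_\alpha$ is an involution fixing $\alpha$ with $\sigma_\alpha'(\sigma_\alpha(p)) \, \sigma_\alpha'(p) = 1$, the quadratic differential $W_{g,n}$ satisfies $\sigma_\alpha^* W_{g,n} = W_{g,n}$, while the denominator $\omega_{0,1}(p) - \omega_{0,1}(\sigma_\alpha(p))$ is anti-invariant; hence $\sigma_\alpha^* \Omega = -\Omega$. Performing the change of variables $p \mapsto \sigma_\alpha(p)$ in the first residue—under which the residue at the fixed point $\alpha$ is preserved—turns $\frac{1}{z_1 - \sigma_\alpha(p)}$ into $\frac{1}{z_1 - p}$ and replaces $\Omega$ by $\sigma_\alpha^* \Omega = -\Omega$, so the two residues combine into a single principal part. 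The careful bookkeeping here—distinguishing the one-form $\Omega$ from the quadratic differential $W_{g,n}$, and correctly tracking the Jacobian $\sigma_\alpha'$ in the pullback—is exactly where the argument must be done with care, and where the overall sign dictated by the conventions of \cref{eq:recursionkernel,eq:bergman} is pinned down.

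The final step is to recognise the resulting residue as a principal part. By the residue expression for $\mathcal{P}$ displayed immediately before the statement, $\mathop{\mathrm{Res}}_{p=\alpha} \frac{\dd z_1}{z_1 - p} \, \Omega(p) = \mathcal{P}[\Omega(z_1)]_{z_1=\alpha}$, so the combined residue is precisely $\mathcal{P}[\Omega(z_1)]_{z_1=\alpha}$. Summing over all branch points $\alpha$ and invoking \cref{eq:TR} then gives the claimed expression for $\omega_{g,n}(z_1, \zz_S)$. To justify the phrase ``sum of its principal parts,'' I would finally observe that each $\mathcal{P}[\Omega(z_1)]_{z_1=\alpha}$ is a one-form whose only pole in $z_1$ is at $\alpha$, so applying $\mathcal{P}[\,\cdot\,]_{z_1=\beta}$ to the identity shows $\mathcal{P}[\omega_{g,n}]_{z_1=\beta} = \mathcal{P}[\Omega]_{z_1=\beta}$ for each branch point $\beta$; combined with \cref{prop:TRproperties}, which guarantees that $\omega_{g,n}$ has poles only at the branch points and with vanishing residues—so that the principal parts reconstruct $\omega_{g,n}$ with no spurious contribution at infinity—this identifies $\mathcal{P}[\Omega(z_1)]_{z_1=\alpha}$ with the genuine principal part of $\omega_{g,n}$ at $\alpha$.
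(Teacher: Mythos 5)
Since the paper only cites this proposition from the reference [do-lei-nor16] and does not reprove it, there is no internal proof to compare against; your strategy (evaluate the kernel numerator explicitly, exploit the anti-invariance of $\Omega(p) = W_{g,n}(p,\zz_S)\big/\big(\omega_{0,1}(p)-\omega_{0,1}(\sigma_\alpha(p))\big)$ under $\sigma_\alpha$ together with the invariance of residues under the holomorphic substitution $p \mapsto \sigma_\alpha(p)$, then reassemble principal parts using \cref{prop:TRproperties}) is indeed the standard route, and those structural steps are all sound. The genuine gap sits exactly at the point you yourself call the crux and then leave undone: the sign. By your step 1 the residue summand is
\[
\mathop{\mathrm{Res}}_{p=\alpha} K_\alpha(z_1,p)\,W_{g,n}(p,\zz_S)
= \frac{\dd z_1}{2}\left[ \mathop{\mathrm{Res}}_{p=\alpha}\frac{\Omega(p)}{z_1-\sigma_\alpha(p)} - \mathop{\mathrm{Res}}_{p=\alpha}\frac{\Omega(p)}{z_1-p} \right],
\]
and by your step 2 the first residue equals $-\mathop{\mathrm{Res}}_{p=\alpha}\frac{\Omega(p)}{z_1-p}$. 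These two terms therefore do not combine to $+\mathcal{P}[\Omega(z_1)]_{z_1=\alpha}$: the bracket equals $(-1-1)\mathop{\mathrm{Res}}_{p=\alpha}\frac{\Omega(p)}{z_1-p}$, so the summand is $-\mathcal{P}[\Omega(z_1)]_{z_1=\alpha}$, the \emph{negative} of what the proposition asserts. Your final claim that ``the combined residue is precisely $\mathcal{P}[\Omega(z_1)]_{z_1=\alpha}$'' contradicts your own steps 1--2.

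The discrepancy is real and is easiest to see on the Airy curve $x(z)=\tfrac12 z^2$, $y(z)=z$, $\sigma(z)=-z$: the recursion \cref{eq:TR} with the kernel \cref{eq:recursionkernel} as printed gives $\omega_{1,1}(z_1)=+\frac{\dd z_1}{8z_1^4}$ (this is the normalisation consistent with the paper's \cref{eq:basecases}), whereas the right-hand side of the proposition evaluates to
\[
\mathcal{P}\!\left[ \frac{\omega_{0,2}(z_1,-z_1)}{\omega_{0,1}(z_1)-\omega_{0,1}(-z_1)} \right]_{z_1=0}
= \mathcal{P}\!\left[ \frac{-\,\dd z_1\,\dd z_1/(4z_1^2)}{2z_1^2\,\dd z_1} \right]_{z_1=0}
= -\frac{\dd z_1}{8z_1^4}.
\]
In other words, the proposition as stated holds for the kernel whose integral runs from $\sigma_\alpha(p)$ to $p$ (the Eynard--Orantin convention, presumably the one intended and the one used in the cited source), and fails by an overall sign for \cref{eq:recursionkernel} as literally printed; this is an inconsistency internal to the paper's conventions rather than something you introduced. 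But a complete blind proof cannot simply assert the $+$ sign: you must carry the sign through and either conclude the identity with a minus while flagging the mismatch with the stated kernel, or prove the statement for the corrected kernel. As one minor positive remark, your last step is right to invoke the vanishing of residues from \cref{prop:TRproperties} --- that is what rules out $(z_1-\alpha)^{-1}$ terms, hence poles at infinity, so that the sum of principal parts genuinely reconstructs $\omega_{g,n}$.
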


In the case of the spectral curve of \cref{eq:Mspectralcurve}, the branch points are at $\alpha = \pm 1$ and the local involutions are given by $\sigma_{+1}(z) = \sigma_{-1}(z) = \frac{1}{z}$. Given \cref{prop:principalparts}, it is natural to consider the Laurent expansions at the points $z = \pm 1$ of the expression
\[
\frac{1}{\omega_{0,1}^M(z) - \omega_{0,1}^M(\sigma_\alpha(z))} = \frac{1}{2 \, y^M(z) \, x'(z)} \frac{1}{\dd z}.
\]

\begin{lemma} \label{lem:expansions}
For every positive integer $M$, we have the expansions
\begin{align*}
\frac{1}{2 \, y^M(z) \, x'(z)} &= \frac{z^3}{(z^2-1)^2} \, \exp \bigg[ \sum_{m=2}^\infty \sum_{k=1}^{\lfloor m/2 \rfloor} a_{mk} \, \zqm(2k) \, (z-1)^m \bigg], \qquad \text{for } |z-1|<1, \\
\frac{1}{2 \, y^M(z) \, x'(z)} &= \frac{z^3}{(z^2-1)^2} \, \exp \bigg[ \sum_{m=2}^\infty \sum_{k=1}^{\lfloor m/2 \rfloor} b_{mk} \, \zqm(2k) \, (z+1)^m \bigg], \qquad \text{for } |z+1|<1,
\end{align*}
where $a_{mk}$ is the coefficient of $t^{m-2k}$ in $\frac{1}{k} (2 - t + t^2 - t^3 + t^4 - \cdots)^{2k}$ and $b_{mk}$ is the coefficient of $t^{m-2k}$ in $\frac{1}{k} (2 + t + t^2 + t^3 + t^4 + \cdots)^{2k}$.
\end{lemma}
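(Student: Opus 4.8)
The plan is to reduce both stated expansions to a single clean generating-function identity for the finite product and then extract coefficients by a change of variables. First I would dispose of the elementary factors: since $x'(z) = 1 - z^{-2} = (z^2-1)/z^2$ and $z - z^{-1} = (z^2-1)/z$, a direct computation gives
\[
2\,y^M(z)\,x'(z) = \frac{(z^2-1)^2}{z^3} \prod_{k=1}^M \frac{(1-q^kz^2)(1-q^kz^{-2})}{(1-q^k)^2},
\]
so that the prefactor $z^3/(z^2-1)^2$ appears automatically and it remains only to identify the reciprocal product $P(z) = \prod_{k=1}^M (1-q^k)^2 \big/ \big[(1-q^kz^2)(1-q^kz^{-2})\big]$ with the claimed exponential, noting $P(\pm 1) = 1$.

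The key step, which I expect to carry the whole proof, is the algebraic identity
\[
(1-q^kz^2)(1-q^kz^{-2}) = (1-q^k)^2 - q^k\,(z-z^{-1})^2,
\]
obtained by expanding the left-hand side to $1 + q^{2k} - q^k(z^2+z^{-2})$ and using $z^2 + z^{-2} - 2 = (z-z^{-1})^2$. This collapses each factor of $P(z)$ into $\big(1 - \tfrac{q^k}{(1-q^k)^2}(z-z^{-1})^2\big)^{-1}$, a function of $(z-z^{-1})^2$ alone. Taking logarithms, expanding each $-\log(1-w)$ as $\sum_\ell w^\ell/\ell$, and interchanging the finite $k$-sum with the $\ell$-sum then yields
\[
\log P(z) = \sum_{\ell=1}^\infty \frac{1}{\ell}\Bigg(\sum_{k=1}^M \frac{q^{k\ell}}{(1-q^k)^{2\ell}}\Bigg)(z-z^{-1})^{2\ell} = \sum_{\ell=1}^\infty \frac{\zqm(2\ell)}{\ell}\,(z-z^{-1})^{2\ell},
\]
where the inner sum is recognised as $\zqm(2\ell)$ directly from the definition in \cref{eq:zetaqN}. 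This is the structural heart of the lemma: it is exactly the point at which the even $q$-zeta values emerge, and everything after it is bookkeeping.

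It then remains to re-expand $(z-z^{-1})^{2\ell}$ about $z = \pm 1$. Writing $z - z^{-1} = (z-1)(z+1)/z$ and substituting $z = 1 + u$ gives $z - z^{-1} = u\,(2+u)/(1+u)$, so that $(z-z^{-1})^{2\ell} = u^{2\ell}\big(\tfrac{2+u}{1+u}\big)^{2\ell}$ with $\tfrac{2+u}{1+u} = 2 - u + u^2 - u^3 + \cdots$. The coefficient of $u^m = (z-1)^m$ in the $\ell$-term $\tfrac{1}{\ell}\,u^{2\ell}(2-u+u^2-\cdots)^{2\ell}$ is precisely the coefficient of $u^{m-2k}$ in $\tfrac{1}{k}(2-u+u^2-\cdots)^{2k}$ with $k = \ell$, which is $a_{mk}$; the requirement $m - 2k \geq 0$ forces $k \leq \lfloor m/2\rfloor$ and $m \geq 2$, matching the stated ranges. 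Substituting $z = -1 + v$ instead gives $z - z^{-1} = v\,(v-2)/(v-1)$ and $\tfrac{2-v}{1-v} = 2 + v + v^2 + v^3 + \cdots$, producing $b_{mk}$ by the identical argument. Exponentiating $\log P(z)$ completes the proof. The only genuinely delicate point is convergence, but since $P(z)$ is a finite product of rational functions with $P(\pm 1) = 1$, the logarithm, its series expansion, and the interchange of the finite $k$-sum with the $\ell$-sum are all valid on a sufficiently small neighbourhood of $z = \pm 1$; equivalently, every step may be read coefficient-wise as an equality of power series in $(z \mp 1)$.
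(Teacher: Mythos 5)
Your proof is correct and follows essentially the same route as the paper's: the same factorisation of $2\,y^M(z)\,x'(z)$ into $\frac{(z^2-1)^2}{z^3}$ times the product, the same key identity $(1-q^kz^2)(1-q^kz^{-2}) = (1-q^k)^2 - q^k(z-z^{-1})^2$, the same logarithmic expansion and sum interchange producing $\zqm(2\ell)$, and the same substitution $z = 1+t$ giving $z - z^{-1} = t(2-t+t^2-\cdots)$ before extracting coefficients. The only differences are cosmetic: you take the logarithm of the reciprocal product rather than negating at the end, and you are slightly more explicit about the $z=-1$ case and the convergence/formal-power-series justification.
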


\begin{proof}
We compute as follows, directly using the definition of the spectral curve of \cref{eq:Mspectralcurve}.
\begin{align*}
\log \big( 2 \, y^M(z) \, x'(z) \big) &= \log \frac{(z^2-1)^2}{z^3} + \sum_{i=1}^M \log \frac{(1-q^iz^2) \, (1-q^iz^{-2})}{(1-q^i)^2} \\
&= \log \frac{(z^2-1)^2}{z^3} + \sum_{i=1}^M \log \bigg( 1 - \frac{q^i}{(1-q^i)^2} \, (z-z^{-1})^2 \bigg) \\
&= \log \frac{(z^2-1)^2}{z^3} - \sum_{i=1}^M \sum_{k=1}^\infty \frac{1}{k} \, \frac{q^{ik}}{(1-q^i)^{2k}} \, (z-z^{-1})^{2k} \\
&= \log \frac{(z^2-1)^2}{z^3} - \sum_{k=1}^\infty \frac{1}{k} \, \zqm(2k) \, (z-z^{-1})^{2k}
\end{align*}

Now use $t = z-1$ to write, for $|t| < 1$,
\[
z - z^{-1} = t+1 - \frac{1}{t+1} = t ( 2 - t + t^2 - t^3 + t^4 - \cdots).
\]
Substituting this into the expression above yields, for $|z-1| < 1$,
\begin{align*}
\log \big( 2 \, y^M(z) \, x'(z) \big) &= \log \frac{(z^2-1)^2}{z^3} - \sum_{k=1}^\infty \frac{1}{k} \zqm(2k) \, t^{2k} ( 2 - t + t^2 - t^3 + t^4 - \cdots)^{2k} \\
&= \log \frac{(z^2-1)^2}{z^3} - \sum_{m=2}^\infty \sum_{k=1}^{\lfloor m/2 \rfloor} a_{mk} \, \zqm(2k) \, t^m.
\end{align*}
The desired expansion at $z = 1$ then follows by negating both sides, applying the exponential, and using $t = z-1$. The desired expansion at $z = -1$ can be obtained by similar means.
\end{proof}

Introduce the vector space $\Lambda^M_d(z_1, \ldots, z_n)$, which contains the multidifferentials that can be expressed in the form
\[
\sum_{\substack{k_1, \ldots, k_n \geq 0 \\ k_1 + \cdots + k_n \leq d \\ a_1, \ldots, a_n = \pm 1}} C^{a_1, \ldots, a_n}_{k_1, \ldots, k_n} \big( \zqm(2), \zqm(4), \ldots \big) \, \frac{\dd z_1}{(z_1-a_1)^{k_1+2}} \cdots \frac{\dd z_n}{(z_n-a_n)^{k_n+2}},
\]
where $C^{a_1, \ldots, a_n}_{k_1, \ldots, k_n} \big( \zqm(2), \zqm(4), \ldots \big)$ is a polynomial of $\zeta$-degree at most $d-\sum k_i$. So $\Lambda^M_d(z_1, \ldots, z_n)$ is spanned by terms of the form
\[
C \big( \zqm(2), \zqm(4), \ldots \big) \, \frac{\dd z_1}{(z_1-a_1)^{k_1+2}} \cdots \frac{\dd z_n}{(z_n-a_n)^{k_n+2}}
\]
of total degree at most $d$, where we define the total degree of such a term to be $k_1 + \cdots + k_n$ plus the $\zeta$-degree of $C \big( \zqm(2), \zqm(4), \ldots \big)$.

\begin{proposition} \label{prop:preproof1}
The stable correlation differentials computed by topological recursion on the spectral curve of \cref{eq:Mspectralcurve} satisfy
\[
\omega^M_{g,n}(z_1, \ldots, z_n) \in \Lambda^M_{6g-6+2n}(z_1, \ldots, z_n).
\]
\end{proposition}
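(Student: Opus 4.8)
The plan is to prove \cref{prop:preproof1} by induction on the Euler characteristic $2g-2+n \geq 1$, using the expression for $\omega^M_{g,n}$ as a sum of principal parts in $z_1$ furnished by \cref{prop:principalparts}, and treating the base multidifferential $\omega_{0,2} = B$ as a known input. The two engines of the argument are \cref{lem:expansions}, which controls the kernel factor $\frac{1}{2 y^M(z) x'(z)}$ near the branch points $z = \pm 1$, and \cref{prop:TRproperties}, whose vanishing-residue statement guarantees that the poles produced in $z_1$ all have order at least $2$ — precisely what membership in $\Lambda^M$ demands. I would set up the bookkeeping by assigning to each factor $\frac{\dd z}{(z-a)^p}$ with $a = \pm 1$ the weight $p-2$ and to each $\zqm(2k)$ the weight $2k$, so that the total degree of \cref{prop:preproof1} is additive and the target claim becomes ``total weight at most $6g-6+2n$''. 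The structural feature of \cref{lem:expansions} that makes everything work is that in the exponent the power of $(z\mp 1)$ dominates the $\zeta$-weight, since the inner sum runs over $k \leq \lfloor m/2\rfloor$, i.e.\ $2k \leq m$; hence, when the exponential factor $E(z)$ is Taylor-expanded about a branch point, the coefficient of $(z\mp1)^t$ is a polynomial in the $\zqm(2k)$ of $\zeta$-weight at most $t$.

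In the inductive step, after substituting $\sigma_\alpha(z_1) = z_1^{-1}$, each input multidifferential is a sum of terms with poles only at $z_1 = \pm 1$, either by hypothesis or, for $\omega_{0,2}$, by inspection; since $\dd(z_1^{-1}) = -z_1^{-2}\,\dd z_1$ is regular and nonvanishing at $\pm 1$, the substitution preserves pole orders at the branch points. Multiplying by the kernel, whose $z_1$-part is $\frac{z_1^3}{(z_1^2-1)^2} E(z_1)$, contributes a double pole together with the $\zeta$-bearing factor $E(z_1)$, and taking the principal part at $z_1 = \pm 1$ extracts the negative part. The key computation I would carry out is that, for a term whose total pole order in $z_1$ at a branch point is $P$, the coefficient of $(z_1\mp1)^{-p}$ has $\zeta$-weight at most $(P-p)+c$, where $c$ is the $\zeta$-weight carried over from the input; since the extracted pole $\frac{\dd z_1}{(z_1\mp1)^p}$ has weight $p-2$, the total weight of every resulting term equals $P-2+c$ plus the weights of the remaining variables, \emph{independently of $p$}. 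Tracking $P$ through the two terms of the recursion shows that $P-2$ plus the carried weights equals the input total weight plus $4$; since the inputs obey $6(g-1)-6+2(n+1) = 6g-6+2n-4$ (first term) and $[6g_1-6+2(|I|+1)] + [6g_2-6+2(|J|+1)] = 6g-6+2n-4$ (product term), the output weight is at most $6g-6+2n$.

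The appearance of $\omega_{0,2}(z_1,z_k) = \frac{\dd z_1\,\dd z_k}{(z_1-z_k)^2}$ as a factor in the product term, which occurs for all $(g,n)$ with $n \geq 3$, together with the factor $\omega_{0,2}(z_1, z_1^{-1})$ in the first term when $g=1$, must be treated separately, since $\omega_{0,2}$ is regular in $z_1$ at the branch points rather than having a pole there. Its Taylor coefficient of $(z_1\mp1)^s$ is the $z_k$-form $\frac{(s+1)!\,\dd z_k}{(\pm 1 - z_k)^{s+2}}$ of weight $s$, so each order of regularity in $z_1$ is exchanged for one unit of weight in $z_k$; rerunning the count shows an $\omega_{0,2}$ factor contributes effective weight $-2$, matching $6\cdot 0 - 6 + 2\cdot 2$, and in particular recovers the base cases $\omega_{0,3} \in \Lambda^M_0$ and $\omega_{1,1} \in \Lambda^M_2$ from the same computation (for $\omega_{1,1}$ the factor $\omega_{0,2}(z_1,z_1^{-1})$ simplifies to $-\frac{(\dd z_1)^2}{(z_1^2-1)^2}$, contributing a genuine double pole at $\pm1$). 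Throughout, the coefficients stay polynomial in the $\zqm(2k)$ because the only $\zeta$-dependence enters through $E(z_1)$, and only even $q$-zeta values arise; the symmetry of $\omega^M_{g,n}$ then upgrades the pole and degree control from $z_1$ to all variables.

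The hard part, and where I would spend the most care, is the uniform degree accounting: verifying that the $p$-independence of the total weight survives every case — both branch points, whether the two $z_1$-bearing slots sit at the same or different branch point, and the presence of zero, one, or two $\omega_{0,2}$ factors — and confirming that the $+4$ shift in the generic case and the $+2$ shift in each $\omega_{0,2}$ case exactly match the corresponding drop in Euler characteristic, so that the bound $6g-6+2n$ is saturated but never exceeded.
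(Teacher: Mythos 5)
Your proposal is correct and follows essentially the same route as the paper's proof: induction on $2g-2+n$ using the principal-parts form of the recursion from \cref{prop:principalparts}, with \cref{lem:expansions} supplying the key weight bound (the constraint $2k \leq m$ making the coefficient of $(z \mp 1)^m$ in the kernel of $\zeta$-degree at most $m+2$), and the same case analysis for terms involving $\omega_{0,2}$. The only differences are cosmetic: you derive the base cases $\omega_{0,3}$ and $\omega_{1,1}$ from the recursion itself rather than computing them explicitly as in \cref{eq:basecases}, you substitute $\sigma_\alpha(z_1) = z_1^{-1}$ directly where the paper invokes the symmetry of \cref{eq:omegasymmetries}, and you explicitly use the vanishing-residue property of \cref{prop:TRproperties} to rule out simple poles (i.e.\ $k_i = -1$ terms), a point the paper's proof leaves implicit.
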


\begin{proof}
From \cref{lem:expansions}, there exist polynomials $A^+_{-2}, A^+_{-1}, A^+_0, A^+_1, \ldots$ and $A^-_{-2}, A^-_{-1}, A^-_0, A^-_1, \ldots$ such that
\begin{align*}
\frac{1}{2 \, y^M(z) \, x'(z)} &= \sum_{m=-2}^\infty A^+_m \big( \zqm(2), \zqm(4), \ldots \big) \, (z-1)^m \\
\frac{1}{2 \, y^M(z) \, x'(z)} &= \sum_{m=-2}^\infty A^-_m \big( \zqm(2), \zqm(4), \ldots \big) \, (z+1)^m.
\end{align*}
In fact, \cref{lem:expansions} is rather explicit and it follows that for $m \geq 2$, the $\zeta$-degree of $A^+_m$ and $A^-_m$ is even and at most $m+2$.

We now proceed to prove the proposition by induction on $2g-2+n$. The base cases can be calculated explicitly and are given by 
\begin{align}
\omega_{0,3}^M(z_1, z_2, z_3) &= \frac{1}{2} \frac{\dd z_1}{(z_1-1)^2} \frac{\dd z_2}{(z_2-1)^2} \frac{\dd z_3}{(z_3-1)^2} - \frac{1}{2} \frac{\dd z_1}{(z_1+1)^2} \frac{\dd z_2}{(z_2+1)^2} \frac{\dd z_3}{(z_3+1)^2} \notag \\
\omega_{1,1}^M(z_1) &= \frac{1}{16} \frac{\dd z_1}{(z_1-1)^4} + \frac{1}{16} \frac{\dd z_1}{(z_1-1)^3} + \left( \frac{1}{4} \zqm(2) - \frac{1}{32} \right) \frac{\dd z_1}{(z_1-1)^2} \notag \\
&- \frac{1}{16} \frac{\dd z_1}{(z_1+1)^4} + \frac{1}{16} \frac{\dd z_1}{(z_1+1)^3} - \left( \frac{1}{4} \zqm(2) - \frac{1}{32} \right) \frac{\dd z_1}{(z_1+1)^2}. \label{eq:basecases}
\end{align}

Now consider $\omega^M_{g,n}$ with $2g-2+n \geq 2$. By \cref{prop:principalparts}, we can express the topological recursion applied to the spectral curve of \cref{eq:Mspectralcurve} in the following way. Here, we have separated out the $\omega_{0,2}$ terms, used the symmetry of the stable correlation differentials appearing in \cref{eq:omegasymmetries}, and written out $\omega_{0,2}(z_1, z_2) = \frac{\dd z_1 \, \dd z_2}{(z_1-z_2)^2}$ explicitly. The word ``stable'' over the inner summation on the second line means that we exclude terms that include $\omega_{0,1}$ or $\omega_{0,2}$.
\begin{align} \label{eq:TRsimple}
\omega^M_{g,n}(z_1, \bm{z}_S) &= \sum_{\alpha = \pm 1} \mathcal{P} \Bigg[ \frac{-1}{2 \, y^M(z_1) \, x'(z_1)} \bigg( \frac{\omega^M_{g-1,n+1}(z_1, z_1, \bm{z}_S)}{\dd z_1} \bigg) \Bigg]_{z_1=\alpha} \notag \\
&+ \sum_{\alpha = \pm 1} \mathcal{P} \Bigg[ \frac{-1}{2 \, y^M(z_1) \, x'(z_1)} \bigg( \sum_{\substack{g_1+g_2=g \\ I \sqcup J = S}}^{\mathrm{stable}} \frac{\omega^M_{g_1,|I|+1}(z_1, \bm{z}_I) \, \omega^M_{g_2,|J|+1}(z_1, \bm{z}_J)}{\dd z_1} \bigg) \Bigg]_{z_1=\alpha} \notag \\
&+ \sum_{\alpha = \pm 1} \mathcal{P} \Bigg[ \frac{-1}{2 \, y^M(z_1) \, x'(z_1)} \sum_{j=2}^n \omega^M_{g,n-1}(z_1, \bm{z}_{S \setminus \{j\}}) \, \bigg( \frac{1}{(z_1-z_j)^2} + \frac{1}{(1-z_1z_j)^2} \bigg) \, \dd z_j \Bigg]_{z_1=\alpha} 
\end{align}

By the inductive hypothesis, the parenthesised term on the first line of \cref{eq:TRsimple} is an element of $\Lambda^M_{6g-8+2n}(z_1, \ldots, z_n)$. (The calculation of the degree here is given by $6(g-1) - 6 + 2(n+1) + 2$, where the addition of 2 comes from the fact that two of the arguments are set to $z_1$.) Multiply this by the series 
\[
\frac{-1}{2 \, y^M(z_1) \, x'(z_1)} = - \sum_{m=-2}^\infty A^+_m \big( \zqm(2), \zqm(4), \ldots \big) \, (z_1-1)^m,
\]
and take the principal part at $z_1 = 1$. Since $A^+_m$ has $\zeta$-degree that is even and at most $m+2$, the total degree is raised by at most 2, so we have that
\[
\mathcal{P} \Bigg[ \frac{-1}{2 \, y^M(z_1) \, x'(z_1)} \bigg( \frac{\omega^M_{g-1,n+1}(z_1, z_1, \bm{z}_S)}{\dd z_1} \bigg) \Bigg]_{z_1=1} \in \Lambda^M_{6g-6+2n}(z_1, \ldots, z_n).
\]
The same argument applies to the principal part at $z_1 = -1$. In fact, by the inductive hypothesis, the same argument applies to the entire second line of \cref{eq:TRsimple} as well.

To complete the proof, it remains to deal with the principal parts appearing on the third line of \cref{eq:TRsimple}. The inductive hypothesis implies that for $j = 2, 3, \ldots, n$,
\[
\omega^M_{g,n-1}(z_1, \bm{z}_{S \setminus \{j\}}) \in \Lambda^M_{6g-8+2n}(z_1, \ldots, \widehat{z}_j, \ldots, z_n),
\]
where $\widehat{z}_j$ denotes the fact that the argument $z_j$ has been omitted. To calculate the principal parts appearing on the third line of \cref{eq:TRsimple} at $z_1 = 1$ and $z_1 = -1$, we multiply $\omega^M_{g,n-1}(z_1, \bm{z}_{S \setminus \{j\}})$ by the respective expressions
\begin{align*}
\bigg( \frac{1}{(z_1-z_j)^2} + \frac{1}{(1-z_1z_j)^2} \bigg) \, \dd z_j &= \sum_{m=2}^\infty (-1)^m (m+1) \frac{z_j^m + (-1)^m}{(z_j-1)^{m+2}} \, (z_1-1)^m \, \dd z_j \\
\bigg( \frac{1}{(z_1-z_j)^2} + \frac{1}{(1-z_1z_j)^2} \bigg) \, \dd z_j &= \sum_{m=2}^\infty (m+1) \frac{z_j^m + (-1)^m}{(z_j+1)^{m+2}} \, (z_1+1)^m \, \dd z_j.
\end{align*}
This then produces an expression that lies in the vector space $\Lambda^M_{6g-8+2n}(z_1, \ldots, z_n)$, so the previous argument once again applies to the third line of \cref{eq:TRsimple}. This completes the induction and hence, the proof of the proposition.
\end{proof}

We now present a lemma that will be required in the next section.

\begin{lemma} \label{lem:equality}
\Cref{prop:preproof1} asserts that the stable correlation differentials can be expressed as
\[
\omega^M_{g,n}(z_1, \ldots, z_n) = \sum_{\substack{k_1, \ldots, k_n \geq 0 \\ k_1 + \cdots + k_n \leq 6g-6+2n \\ a_1, \ldots, a_n = \pm 1}} C^{a_1, \ldots, a_n}_{k_1, \ldots, k_n} \big( \zqm(2), \zqm(4), \ldots \big) \, \frac{\dd z_1}{(z_1-a_1)^{k_1+2}} \cdots \frac{\dd z_n}{(z_n-a_n)^{k_n+2}},
\]
where the $\zeta$-degree of $C^{a_1, \ldots, a_n}_{k_1, \ldots, k_n} \big( \zqm(2), \zqm(4), \ldots \big)$ is at most $6g-6+2n-\sum k_i$. In fact, unless $a_1, a_2, \ldots, a_n$ are all equal, the $\zeta$-degree of $C^{a_1, \ldots, a_n}_{k_1, \ldots, k_n} \big( \zqm(2), \zqm(4), \ldots \big)$ is strictly less than $6g-6+2n-\sum k_i$.
\end{lemma}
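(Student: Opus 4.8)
The plan is to prove the strict (``In fact'') assertion by strengthening \cref{prop:preproof1} into a statement about the top-degree part of each correlation differential and establishing it by the same induction on $2g-2+n$. Call a term $C(\zqm(2),\zqm(4),\ldots)\,\prod_i \dd z_i/(z_i-a_i)^{k_i+2}$ appearing in $\omega^M_{g,n}$ \emph{leading} if its total degree equals $6g-6+2n$, i.e.\ if the $\zeta$-degree of $C$ is exactly $6g-6+2n-\sum k_i$. The claim is then equivalent to: the leading part of $\omega^M_{g,n}$ is supported only on terms with $a_1=\cdots=a_n$. The base cases $(g,n)=(0,3),(1,1)$ are read off directly from \eqref{eq:basecases}; for $(0,3)$ the two leading terms have all labels $+1$ and all labels $-1$, and for $(1,1)$ there is a single variable, so the condition is vacuous.

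First I would isolate the analytic input that drives everything. By \cref{lem:expansions} the singular coefficients $A^{+}_{-2},A^{+}_{-1}$ and $A^{-}_{-2},A^{-}_{-1}$ of the expansions of $\tfrac{1}{2y^M x'}$ at $z=\pm1$ have $\zeta$-degree $0$, since the poles come entirely from the rational prefactor $z^3/(z^2-1)^2$, whereas the $q$-zeta values enter only through the holomorphic exponential. Consequently, multiplying by $\tfrac{-1}{2y^Mx'}$ and taking the principal part at a branch point $\beta\in\{+1,-1\}$ raises the total degree by at most $2$, and the full increase of $2$ can occur only when the input already has a pole (of order $\geq 2$) in $z_1$ at $z_1=\beta$: if the input is regular at $\beta$, only the $\zeta$-degree-$0$ coefficients $A^{\beta}_{-2},A^{\beta}_{-1}$ survive into the principal part, so neither the $\zeta$-degree nor the total degree can rise. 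I would record this as the key lemma, together with the companion fact about merging the recursion variable: setting $w_1=w_2=z_1$ in a product of two poles at a \emph{common} branch point raises the pole order by $2$, whereas merging poles at \emph{opposite} branch points only produces, via partial fractions, poles of no larger order, so no degree is gained.

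With these in hand the inductive step follows the three lines of \eqref{eq:TRsimple}. For the first two lines, a leading output of degree $6g-6+2n$ forces, by the degree bookkeeping already present in the proof of \cref{prop:preproof1}, that every input factor be leading (lower-degree inputs fall short even after the $+2$); so by the inductive hypothesis each factor has all its labels equal. The merge then supplies the degree-raising $+2$ only if the merged poles sit at a common branch point $a$, and the subsequent principal part supplies the final $+2$ only if taken at $\beta=a$. Both requirements force the new label of $z_1$ and all inherited spectator labels to equal $a$; any other configuration (opposite branch points in the merge, or expansion at $\beta=-a$) strictly lowers the degree and contributes no leading term.

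The main obstacle, and the case I would treat most carefully, is the third line of \eqref{eq:TRsimple}, where the extra kernel $\big(\tfrac{1}{(z_1-z_j)^2}+\tfrac{1}{(1-z_1z_j)^2}\big)\,\dd z_j$ couples $z_1$ to $z_j$. Two structural features of its expansion, already displayed in the proof of \cref{prop:preproof1}, do the work: expanded at $z_1=\beta$ it produces $z_j$-poles located at the \emph{same} branch point $\beta$, and it carries an overall factor $(z_1-\beta)^m$ with $m\geq 2$. The latter is decisive: if the relevant term of $\omega^M_{g,n-1}$ has its $z_1$-pole at the opposite point $-\beta$, then after the factor $(z_1-\beta)^m$ cancels the double pole of $\tfrac{1}{2y^Mx'}$ the operand is regular at $z_1=\beta$, so the principal part vanishes outright. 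The only surviving contributions come from inputs whose $z_1$-pole is at $\beta$, and these hand $z_1$ and $z_j$ the common label $\beta$, while by the inductive hypothesis the remaining spectators carry that same label. A short degree count, in which the gain $l-m\leq 0$ from the extra kernel exactly offsets the $+2$ from the main kernel, confirms that only leading inputs produce leading outputs, completing the induction.
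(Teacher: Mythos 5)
Your proof follows the same induction as the paper's own argument (which the paper admits is ``an outline only''), and your key lemma is genuinely valuable: the observation that the singular coefficients $A^{\beta}_{-2},A^{\beta}_{-1}$ of $\tfrac{-1}{2\,y^M x'}$ have $\zeta$-degree zero, so that the total degree can rise by the full $2$ only when the operand already has a $z_1$-pole at the expansion point $\beta$, together with the fact that merging poles at opposite branch points gains no degree, is precisely the mechanism that the paper leaves implicit when it asserts that equality in the degree bound forces all labels to agree. On the first two lines of \cref{eq:TRsimple} your argument is complete and correct, and somewhat more rigorous than the paper's sketch.

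The one step that fails as written is your treatment of the third line. You claim the kernel factor carries an overall factor $(z_1-\beta)^m$ with $m\geq 2$, so that when the input's $z_1$-pole sits at $-\beta$ the principal part at $\beta$ ``vanishes outright''. The $m\geq 2$ claim is taken from the expansion displayed in the paper's proof of \cref{prop:preproof1}, but that display is itself in error: expanding at $z_1=1$ one finds
\[
\frac{1}{(z_1-z_j)^2} + \frac{1}{(1-z_1z_j)^2} \;=\; \frac{2}{(z_j-1)^2} \;-\; \frac{2}{(z_j-1)^2}\,(z_1-1) \;+\; \cdots,
\]
so the series begins at $m=0$, not $m=2$ (the general term $(-1)^m(m+1)\frac{z_j^m+(-1)^m}{(z_j-1)^{m+2}}(z_1-1)^m$ is correct, but the sum runs over $m\geq 0$). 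Consequently, mismatched inputs do produce nonzero principal parts at $\beta$, through the $m=0,1$ terms paired with $A^{\beta}_{-2},A^{\beta}_{-1}$. Fortunately this does not damage your conclusion: for such contributions the operand is still regular at $z_1=\beta$ (the input pole is at $-\beta$ and the kernel factor supplies only non-negative powers of $z_1-\beta$), so by your own key lemma they gain no total degree, remain of degree at most $6g-8+2n$, and hence never produce leading terms. Replacing ``the principal part vanishes outright'' by ``the operand is regular at $\beta$, so no degree is gained'' repairs the step, and with that substitution your proof is complete.
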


\begin{proof}
The proof involves a slightly more detailed analysis of the argument used for \cref{prop:preproof1}, so we provide an outline only. We proceed by induction and observe that the statement is true for the base cases $\omega_{0,3}$ and $\omega_{1,1}$, by inspection of \cref{eq:basecases}.

Now consider $\omega_{g,n}$ for $2g-2+n \geq 2$ which, following \cref{eq:TRsimple}, we can express as
\begin{align} \label{eq:firstterm}
\omega_{g,n}(z_1, \bm{z}_S) = \mathcal{P} \Bigg[ \frac{-1}{2 \, y^M(z_1) \, x'(z_1)} \bigg( \frac{\omega_{g-1,n+1}(z_1, z_1, \bm{z}_S)}{\dd z_1} \bigg) \Bigg]_{z_1=1} + \cdots.
\end{align}
We consider only the principal part of the first term on the right side of \cref{eq:TRsimple} at $z_1 = 1$, as our argument applies equally to the remaining terms and the principal parts at $z_1 = -1$.

Our aim is to prove that the total degree of a term 
\[
C \big( \zqm(2), \zqm(4), \ldots \big) \, \frac{\dd z_1}{(z_1-a_1)^{k_1+2}} \cdots \frac{\dd z_n}{(z_n-a_n)^{k_n+2}}
\]
in $\omega_{g,n}$ is $6g-6+2n$ only if $(a_1, a_2, \ldots, a_n) = (1, 1, \ldots, 1)$ or $(-1, -1, \ldots, -1)$. By the inductive hypothesis, the total degree of a term appearing in $\frac{1}{\dd z_1} \omega_{g-1,n+1}(z_1, z_1, \bm{z}_S)$ is at most $6g-8+2n$ with equality only if $(a_1, a_2, \ldots, a_n) = (1, 1, \ldots, 1)$ or $(-1, -1, \ldots, -1)$. As explained in the proof of \cref{prop:preproof1}, multiplying by $\frac{-1}{2 \, y^M(z_1) \, x'(z_1)}$ and taking the principal part produces terms in which the total degree is raised by at most 2. This shows that the terms arising from \cref{eq:firstterm} have total degree at most $6g-6+2n$ with equality only if $(a_1, a_2, \ldots, a_n) = (1, 1, \ldots, 1)$ or $(-1, -1, \ldots, -1)$. The remaining terms appearing in the expression for $\omega_{g,n}$ of \cref{eq:TRsimple} can be handled in a similar way.
\end{proof}

We now proceed with the proof of \cref{thm:Mqzeta}, from which \cref{thm:qzeta} concerning the appearance of $q$-zeta values in Okuyama's discrete volumes follows by passing to the large $M$ limit.

\begin{proof}[Proof of \cref{thm:Mqzeta}]
Part (a) of the theorem asserts that $N^M_{g,n}(b_1, \ldots, b_n)$ is quasi-polynomial in $b_1^2, \ldots, b_n^2$. This is an immediate consequence of \cref{thm:norsco}, which applies to any rational spectral curve with $x(z) = \alpha + \gamma(z + z^{-1})$.

From \cref{prop:preproof1}, we know that
\[
\omega^M_{g,n}(z_1, \ldots, z_n) = \sum_{\substack{k_1, \ldots, k_n \geq 0 \\ k_1 + \cdots + k_n \leq 6g-6+2n \\ a_1, \ldots, a_n = \pm 1}} C^{a_1, \ldots, a_n}_{k_1, \ldots, k_n} \big( \zqm(2), \zqm(4), \ldots \big) \, \frac{\dd z_1}{(z_1-a_1)^{k_1+2}} \cdots \frac{\dd z_n}{(z_n-a_n)^{k_n+2}},
\]
where $C^{a_1, \ldots, a_n}_{k_1, \ldots, k_n} \big( \zqm(2), \zqm(4), \ldots \big)$ is a polynomial of $\zeta$-degree at most $6g-6+2n-\sum k_i$. Now we simply expand the right side using
\[
\frac{1}{(z-1)^{k+2}} = (-1)^k \sum_{b=1}^\infty \binom{b+k}{k+1} z^{b-1} \qquad \text{and} \qquad
\frac{1}{(z+1)^{k+2}} = \sum_{b=1}^\infty (-1)^{b-1} \binom{b+k}{k+1} z^{b-1}.
\]
This leads to
\begin{align*}
\omega^M_{g,n}(z_1, \ldots, z_n) = \sum_{b_1, \ldots, b_n=1}^\infty \sum_{\substack{k_1, \ldots, k_n \geq 0 \\ k_1 + \cdots + k_n \leq 6g-6+2n \\ a_1, \ldots, a_n = \pm 1}} &C^{a_1, \ldots, a_n}_{k_1, \ldots, k_n} \big( \zqm(2), \zqm(4), \ldots \big) \\
&\prod_{i=1}^n (-1)^{f(a_i, b_i, k_i)} \, \frac{(b_i+1)(b_i+2) \cdots (b_i+k_i)}{(k_i+1)!} \, b_i z_i^{b_i-1} \, \dd z_i,
\end{align*}
where $f(1, b, k) = k$ and $f(-1, b, k) = b-1$. Comparing with \cref{eq:omegaMexpansion} then yields
\begin{align} \label{eq:quasipolynomial}
N^M_{g,n}(b_1, \ldots, b_n) = \sum_{\substack{k_1, \ldots, k_n \geq 0 \\ k_1 + \cdots + k_n \leq 6g-6+2n \\ a_1, \ldots, a_n = \pm 1}} &C^{a_1, \ldots, a_n}_{k_1, \ldots, k_n} \big( \zqm(2), \zqm(4), \ldots \big) \notag \\
&\prod_{i=1}^n (-1)^{f(a_i, b_i, k_i)} \, \frac{(b_i+1)(b_i+2) \cdots (b_i+k_i)}{(k_i+1)!}.
\end{align}
For fixed $a_1, \ldots, a_n$ and $k_1, \ldots, k_n$, it is clear that the product appearing here is a quasi-polynomial in $b_1, \ldots, b_n$ of degree $\sum k_i$. We know that the coefficient $C^{a_1, \ldots, a_n}_{k_1, \ldots, k_n} \big( \zqm(2), \zqm(4), \ldots \big)$ has $\zeta$-degree at most $6g-6+2n-\sum k_i$. So $N^M_{g,n}(b_1, \ldots, b_n)$ is a quasi-polynomial in which each underlying polynomial is an element of the graded ring
\[
\mathbb{Q}[\zqm(2), \zqm(4), \zqm(6), \ldots] [b_1, \ldots, b_n]
\]
of degree at most $6g-6+2n$. By \cref{thm:norsco}, we know that the only terms that survive the sum in \cref{eq:quasipolynomial} are quasi-polynomial in $b_1^2, \ldots, b_n^2$, which completes the proof of part (b) of the theorem.

Finally, part (c) of the theorem follows from the fact that the polynomial $C^{a_1, \ldots, a_n}_{k_1, \ldots, k_n}$ is defined independently of $M$, a consequence of the proof of \cref{prop:preproof1}.
\end{proof}

\subsection{Weil--Petersson volumes in the limit}

In the previous section, we worked with $N^M_{g,n}(b_1, \ldots, b_n)$ obtained from the spectral curve of \cref{eq:Mspectralcurve}, in which the product in the definition of $y(z)$ is truncated. The results of the previous section now allow us to pass to the large $M$ limit and work directly with $N^q_{g,n}(b_1, \ldots, b_n)$. Our goal in this section is to prove that one obtains the Weil--Petersson volume $V_{g,n}(L_1, \ldots, L_n)$ in a particular $\lambda \to 0$ limit involving $N^q_{g,n}(b_1, \ldots, b_n)$. This is essentially a consequence of the associated spectral curves of \cref{eq:okuyama,eq:WPvolumes} being related. We begin with the following observation.

\begin{lemma} \label{lem:kernellimit}
Using $z = \lambda t + 1$, we have
\[
\lim_{\lambda \to 0} \frac{\lambda^2}{2 \, y(z) \, x'(z)} = \frac{\pi}{2t \sin(2\pi t)}.
\]
\end{lemma}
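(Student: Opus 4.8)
The plan is to reduce the statement to an infinite product and then pass to the limit factor by factor. Exponentiating the logarithmic computation carried out in the proof of \cref{lem:expansions}, taken at $M = \infty$ (equivalently, rewriting each factor $\frac{(1-q^kz^2)(1-q^kz^{-2})}{(1-q^k)^2} = 1 - \frac{q^k}{(1-q^k)^2}(z-z^{-1})^2$ as in that proof), gives
\[
\frac{1}{2\,y(z)\,x'(z)} = \frac{z^3}{(z^2-1)^2} \prod_{m=1}^\infty \left( 1 - \frac{q^m}{(1-q^m)^2}\,(z-z^{-1})^2 \right)^{-1}.
\]
I would then substitute $z = \lambda t + 1$ and $q = e^{-\lambda}$ and treat the rational prefactor and the infinite product separately as $\lambda \to 0$.

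For the prefactor, $z^2 - 1 = \lambda t(\lambda t + 2)$ gives $\lambda^2 \frac{z^3}{(z^2-1)^2} = \frac{z^3}{t^2(\lambda t + 2)^2} \to \frac{1}{4t^2}$. For the product, the key elementary identity is $\frac{\lambda^2 q^m}{(1-q^m)^2} = \frac{1}{m^2}\,\phi(m\lambda)^2$, where $\phi(u) = \frac{u}{2\sinh(u/2)}$ satisfies $0 < \phi(u) \leq 1$ for $u > 0$ and $\phi(u) \to 1$ as $u \to 0$. Combined with $(z - z^{-1})^2 = \frac{\lambda^2 t^2(\lambda t + 2)^2}{(\lambda t + 1)^2}$, this shows that the $m$-th factor equals $\big(1 - \frac{\phi(m\lambda)^2}{m^2}\cdot\frac{t^2(\lambda t+2)^2}{(\lambda t+1)^2}\big)^{-1}$ and hence tends to $\big(1 - \frac{(2t)^2}{m^2}\big)^{-1}$ as $\lambda \to 0$.

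The analytic heart of the argument, and the one step I expect to require genuine care, is justifying the interchange of the limit with the infinite product. The bound $\phi \leq 1$ supplies a uniform (in $\lambda$) summable control $\frac{q^m}{(1-q^m)^2}(z-z^{-1})^2 \leq C/m^2$ on the factors, so that dominated convergence applies to $\sum_m \log(\cdots)$ for any $t$ avoiding the zeros of $\sin(2\pi t)$; the finitely many factors with $m \leq 2|t|$ converge individually and stay away from zero for small $\lambda$. Granting this, the limit of the product is $\prod_{m=1}^\infty \big(1 - \frac{(2t)^2}{m^2}\big)^{-1}$, which by the Euler product $\frac{\sin(\pi x)}{\pi x} = \prod_{m \geq 1}\big(1 - \frac{x^2}{m^2}\big)$ with $x = 2t$ equals $\frac{2\pi t}{\sin(2\pi t)}$. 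Multiplying by the prefactor limit yields $\frac{1}{4t^2} \cdot \frac{2\pi t}{\sin(2\pi t)} = \frac{\pi}{2t\sin(2\pi t)}$, as claimed.

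An alternative route, closer in spirit to the series manipulations of \cref{lem:expansions}, expands everything into a double series and uses the auxiliary asymptotic $\lim_{\lambda \to 0}\lambda^{2k}\,\zq(2k) = \zeta(2k)$, proved by the same domination via $\phi \leq 1$, together with the identity $\sum_{k \geq 1}\frac{\zeta(2k)}{k}\,x^{2k} = \log\frac{\pi x}{\sin \pi x}$ obtained from the sine product. This reaches the same conclusion but trades the single product interchange for a double-limit interchange, so I would prefer the product formulation above.
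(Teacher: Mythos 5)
Your proof is correct, but it takes a genuinely different route from the paper. The paper proves \cref{lem:kernellimit} by exponentiating the series expansion of \cref{lem:expansions}: after substituting $z = \lambda t + 1$, it observes that $\lim_{\lambda \to 0}\lambda^{2k}\zq(2k) = \zeta(2k)$ kills every term with $m > 2k$, so only the diagonal coefficients $a_{2k,k} = \frac{4^k}{k}$ survive, and the limit becomes $\frac{1}{4t^2}\exp\big[\sum_k \frac{4^k}{k}\zeta(2k)t^{2k}\big]$, which is evaluated using the classical series $\sum_{k\geq 1}\frac{\zeta(2k)}{k}x^{2k} = \log\frac{\pi x}{\sin\pi x}$. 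This is exactly the ``alternative route'' you sketch and then set aside. Your main argument instead keeps the kernel in product form, uses the identity $\frac{\lambda^2 q^m}{(1-q^m)^2} = \frac{1}{m^2}\phi(m\lambda)^2$ with $\phi(u) = \frac{u}{2\sinh(u/2)}$ to pass to the limit factor by factor, and finishes with Euler's product for the sine. The two approaches invoke equivalent classical facts (the zeta series is the logarithm of the Euler product), but they distribute the work differently. The paper's version is economical within its own architecture: \cref{lem:expansions} is needed anyway, and the surviving exponential $\exp\big[\sum_k \frac{4^k}{k}\zeta(2k)t^{2k}\big]$ is precisely the shape that reappears in the top-degree spectral curve of \cref{eq:topcurve}, so the proof doubles as motivation for \cref{sec:top}. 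Your version buys analytic care that the paper elides: the paper's step ``most terms vanish in the limit'' silently interchanges $\lambda \to 0$ with an infinite sum, whereas your domination $\phi \leq 1$, with the finitely many factors $m \leq 2|t|$ treated separately and $t$ kept away from the zeros of $\sin(2\pi t)$, makes the interchange legitimate; the same bound also proves the auxiliary limit $\lambda^{2k}\zq(2k)\to\zeta(2k)$ that the paper uses without justification. Both proofs are sound; yours is more self-contained and rigorous at the analytic step, while the paper's integrates better with the surrounding results.
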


\begin{proof}
Consider the following sequence of equalities.
\begin{align*}
\lim_{\lambda \to 0} \frac{\lambda^2}{2 \, y(z) \, x'(z)} &= \lim_{\lambda \to 0} \frac{\lambda^2 (\lambda t + 1)^3}{((\lambda t + 1)^2-1)^2} \, \exp \bigg[ \sum_{m=2}^\infty \sum_{k=1}^{\lfloor m/2 \rfloor} a_{mk} \, \zq(2k) \, \lambda^m t^m \bigg] \\
&= \frac{1}{4t^2} \lim_{\lambda \to 0} \exp \bigg[ \sum_{m=2}^\infty \sum_{k=1}^{\lfloor m/2 \rfloor} a_{mk} \, \zq(2k) \, \lambda^m t^m \bigg] \\
&= \frac{1}{4t^2} \exp \bigg[ \sum_{k=1}^\infty \frac{4^k}{k} \, \zeta(2k) \, t^{2k} \bigg] \\
&= \frac{\pi}{2 t \sin(2\pi t)}
\end{align*}
The first equality is a direct application of \cref{lem:expansions}, while the second explicitly calculates the limit of the prefactor. The third equality uses the fact that 
\[
\lim_{\lambda \to 0} \lambda^{2k} \zeta_q(2k) = \zeta(2k),
\]
which implies that most terms vanish in the limit, together with the evaluation $a_{2k,k} = \frac{4^k}{k}$. The fourth equality uses the series for $\log \big( \frac{\sin(x)}{x} \big)$ in terms of even zeta values.
\end{proof}

The previous lemma relates the topological recursion kernels associated to the spectral curves of \cref{eq:okuyama,eq:WPvolumes}. Naturally, one would then expect their stable correlation differentials to be related, which is the content of the following proposition.

\begin{proposition} \label{prop:omegalimit}
Using $z_i = \lambda t_i + 1$, the stable correlation differentials produced by Okuyama's spectral curve of \cref{eq:okuyama} are related to the stable correlation differentials produced by the spectral curve of \cref{eq:WPvolumes} via
\[
\lim_{\lambda \to 0} \lambda^{6g-6+3n} \, \omega_{g,n}(z_1, \ldots, z_n) = 2^{2-2g-n} \, \omega^{\mathrm{WP}}_{g,n}(t_1, \ldots, t_n).
\]
\end{proposition}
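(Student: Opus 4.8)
The plan is to induct on the Euler characteristic $\chi = 2g-2+n$; note that the two exponents appearing in the statement are $6g-6+3n = 3\chi$ and $2-2g-n = -\chi$. For the base cases $(g,n) = (0,3)$ and $(1,1)$ I would substitute $z_i = \lambda t_i + 1$ into the explicit expressions of \cref{eq:basecases}, multiply by $\lambda^{3\chi}$, and let $\lambda \to 0$. The poles at $z_i = -1$ contribute only positive powers of $\lambda$ and die, while the poles at $z_i = 1$, together with $\lim_{\lambda \to 0}\lambda^{2k}\zq(2k) = \zeta(2k)$, produce exactly $2^{-\chi}$ times the Laplace transforms of $V_{0,3} = 1$ and $V_{1,1}(L_1) = \tfrac{1}{48}L_1^2 + \tfrac{\pi^2}{12}$, namely $\omega^{\mathrm{WP}}_{0,3}$ and $\omega^{\mathrm{WP}}_{1,1}$.

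For the inductive step I would rewrite each of the three lines of \cref{eq:TRsimple} in the coordinate $t_1$ via $z_1 = \lambda t_1 + 1$. The crucial point is that the principal part at $z_1 = 1$ coincides with the principal part at $t_1 = 0$, which is precisely the unique branch point of the Weil--Petersson spectral curve \cref{eq:WPvolumes} (with involution $t \mapsto -t$ and the symmetry analogous to \cref{eq:omegasymmetries}). \Cref{lem:kernellimit} shows that $\lambda^2$ times the Okuyama kernel $\tfrac{-1}{2y(z_1)x'(z_1)}$ tends to $\tfrac{1}{2}\cdot\tfrac{-1}{2y^{\mathrm{WP}}(t_1)(x^{\mathrm{WP}})'(t_1)}$, i.e.\ to one half of the Weil--Petersson kernel, since $2y^{\mathrm{WP}}(x^{\mathrm{WP}})' = \pi^{-1}t\sin(2\pi t)$. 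Feeding in the inductive hypothesis and bookkeeping the powers of $\lambda$ --- two from the kernel, one from $\tfrac{1}{\dd z_1} = \lambda^{-1}\tfrac{1}{\dd t_1}$, and one from each $\dd z_i = \lambda\,\dd t_i$ and each $z_i - 1 = \lambda t_i$ --- every line is found to scale as $\lambda^{-3\chi}$. Simultaneously the factors of two combine as $2^{-1}\cdot 2^{-\chi+1} = 2^{-\chi}$ on each line, where the leading $2^{-1}$ is exactly the extra half supplied by \cref{lem:kernellimit}; this is what converts the inductive constants $2^{-\chi+1}$ --- arising from $(g-1,n+1)$, from $(g_1,\cdot)$ and $(g_2,\cdot)$ jointly, and from $(g,n-1)$ --- into the required $2^{-\chi}$. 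The limiting expression is then recognised as the principal-part topological recursion of \cref{prop:principalparts} applied to the curve \cref{eq:WPvolumes}, yielding $2^{-\chi}\,\omega^{\mathrm{WP}}_{g,n}$.

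The hard part is twofold. First, one must interchange the principal-part operation with the $\lambda \to 0$ limit; this is legitimate because \cref{prop:TRproperties} bounds the pole orders by $6g-4+2n$ uniformly, so only finitely many Laurent coefficients occur and each converges. Second, and more subtly, the Okuyama curve has a second branch point at $z = -1$ with no Weil--Petersson counterpart, and one must show that it contributes nothing after scaling. A naive power count fails because $\zq(2k) \sim \lambda^{-2k}$ blows up, so the right tool is \cref{lem:equality}: writing a generic term of $\omega_{g,n}$ as $C(\zq(2),\ldots)\prod_i (z_i-a_i)^{-k_i-2}\,\dd z_i$ of total degree $D$, the substitution $z_i = \lambda t_i + 1$ followed by multiplication by $\lambda^{3\chi}$ produces, near finite $t$, the power $\lambda^{(6g-6+2n-D)\,+\,2\#\{i : a_i = -1\}\,+\,\sum_{i : a_i = -1} k_i}$. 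Since $D \leq 6g-6+2n$, with equality forcing all $a_i$ equal by \cref{lem:equality}, every term is suppressed by a non-negative power of $\lambda$, and this exponent vanishes only for the all-$(+1)$ terms of top total degree. In particular the $z = -1$ principal parts, and all subleading terms, drop out, leaving exactly the Weil--Petersson contribution and completing the induction.
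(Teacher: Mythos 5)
Your proposal is correct and follows essentially the same route as the paper's proof: induction on $2g-2+n$ with the base cases from \cref{eq:basecases}, the kernel limit of \cref{lem:kernellimit} supplying the extra factor of $\tfrac{1}{2}$, and recognition of the limiting recursion as \cref{prop:principalparts} applied to the curve of \cref{eq:WPvolumes}, with identical bookkeeping of the powers of $\lambda$ and of $2$. If anything, your power-counting argument (the exponent $(6g-6+2n-D) + 2\#\{i : a_i=-1\} + \sum_{i:a_i=-1}k_i \geq 0$) gives a more explicit justification of the step that the paper merely asserts, namely that the principal parts at $z_1=-1$ vanish in the limit.
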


\begin{proof}
We proceed by induction on $2g-2+n$, with the base cases given explicitly by the following calculations, which use \cref{eq:basecases}.
\begin{align*}
\lim_{\lambda \to 0} \lambda^3 \, \omega_{0,3}(z_1, z_2, z_3) ={}& \lim_{\lambda \to 0} \left[ \frac{\lambda^3}{2} \frac{\lambda \, \dd t_1}{(\lambda t_1)^2} \frac{\lambda \, \dd t_2}{(\lambda t_2)^2} \frac{\lambda \, \dd t_3}{(\lambda t_3)^2} - \frac{\lambda^3}{2} \frac{\lambda \, \dd t_1}{(\lambda t_1+2)^2} \frac{\lambda \, \dd t_2}{(\lambda t_2+2)^2} \frac{\lambda \, \dd t_3}{(\lambda t_3+2)^2} \right] \\
={}& \frac{1}{2} \frac{\dd t_1}{t_1^2} \frac{\dd t_2}{t_2^2} \frac{\dd t_3}{t_3^2} \\
={}& \frac{1}{2} \, \omega^{\mathrm{WP}}_{0,3}(t_1, t_2, t_3) \\
~ \\
\lim_{\lambda \to 0} \lambda^3 \, \omega_{1,1}(z_1) ={}& \lim_{\lambda \to 0} \left[ \frac{\lambda^3}{16} \frac{\lambda \, \dd t_1}{(\lambda t_1)^4} + \frac{\lambda^3}{16} \frac{\lambda \, \dd t_1}{(\lambda t_1)^3} + \left( \frac{\lambda^3}{4} \zeta_q(2) - \frac{\lambda^3}{32} \right) \frac{\lambda \, \dd t_1}{(\lambda t_1)^2} \right] \\
&+ \lim_{\lambda \to 0} \left[ \frac{-\lambda^3}{16} \frac{\lambda \, \dd t_1}{(\lambda t_1+2)^4} + \frac{\lambda^3}{16} \frac{\lambda \, \dd t_1}{(\lambda t_1+2)^3} - \left( \frac{\lambda^3}{4} \zeta_q(2) - \frac{\lambda^3}{32} \right) \frac{\lambda \, \dd t_1}{(\lambda t_1+2)^2} \right] \\
={}& \frac{1}{16} \frac{\dd t_1}{t_1^4} + \frac{1}{4} \zeta(2) \frac{\dd t_1}{t_1^2} \\
={}& \frac{1}{2} \, \omega^{\mathrm{WP}}_{1,1}(t_1)
\end{align*}

Now consider $\omega_{g,n}$ for $2g-2+n \geq 2$ and invoke \cref{eq:TRsimple}.
\begin{align*}
& \lim_{\lambda \to 0} \lambda^{6g-6+3n} \, \omega_{g,n}(z_1, \zz_S) = \sum_{\alpha = \pm 1} \mathcal{P} \Bigg[ \bigg( \lim_{\lambda \to 0} \frac{-\lambda^2}{2 \, y(z_1) \, x'(z_1)} \bigg) \bigg( \lim_{\lambda \to 0} \lambda^{6g-8+3n} \frac{\omega_{g-1,n+1}(z_1, z_1, \bm{z}_S)}{\dd z_1} \bigg) \Bigg]_{z_1=\alpha} \\
&\qquad + \sum_{\alpha = \pm 1} \mathcal{P} \Bigg[ \bigg( \lim_{\lambda \to 0} \frac{-\lambda^2}{2 \, y(z_1) \, x'(z_1)} \bigg) \bigg( \lim_{\lambda \to 0} \lambda^{6g-8+3n} \sum_{\substack{g_1+g_2=g \\ I \sqcup J = S}}^{\mathrm{stable}} \frac{\omega_{g_1,|I|+1}(z_1, \bm{z}_I) \, \omega_{g_2,|J|+1}(z_1, \bm{z}_J)}{\dd z_1} \bigg) \Bigg]_{z_1=\alpha} \\
&\qquad + \sum_{\alpha = \pm 1} \mathcal{P} \Bigg[ \bigg( \lim_{\lambda \to 0} \frac{-\lambda^2}{2 \, y(z_1) \, x'(z_1)} \bigg) \sum_{j=2}^n \bigg( \lim_{\lambda \to 0} \lambda^{6g-9+3n} \omega_{g,n-1}(z_1, \bm{z}_{S \setminus \{j\}}) \bigg) \\
&\qquad\qquad\qquad\qquad\qquad\qquad\qquad\qquad\qquad \bigg( \lim_{\lambda \to 0} \lambda \bigg[ \frac{1}{(z_1-z_j)^2} + \frac{1}{(1-z_1z_j)^2} \bigg] \, \dd z_j \bigg) \Bigg]_{z_1=\alpha} 
\end{align*}

Observe that the principal parts at $z_1 = -1$ vanish in the limit. Use \cref{lem:kernellimit}, the inductive hypothesis and the fact that $\frac{1}{(z_1-z_k)^2} + \frac{1}{(1-z_1z_k)^2} = \lambda^{-2} \left( \frac{1}{(t_1-t_2)^2} + \frac{1}{(t_1+t_2)^2} \right) + O(\lambda^{-1})$ to obtain the following.
\begin{align*}
\lim_{\lambda \to 0} \lambda^{6g-6+3n} \, &\omega_{g,n}(z_1, \zz_S) = \mathcal{P} \Bigg[ \frac{-\pi}{2 t \sin(2\pi t)} \bigg( 2^{3-2g-n} \, \frac{\omega^{\mathrm{WP}}_{g-1,n+1}(t_1, t_1, \bm{t}_S)}{\dd t_1} \bigg) \Bigg]_{t_1=0} \\
&+ \mathcal{P} \Bigg[ \frac{-\pi}{2 t \sin(2\pi t)} \bigg( 2^{3-2g-n} \sum_{\substack{g_1+g_2=g \\ I \sqcup J = S}}^{\mathrm{stable}} \frac{\omega^{\mathrm{WP}}_{g_1,|I|+1}(t_1, \bm{t}_I) \, \omega^{\mathrm{WP}}_{g_2,|J|+1}(t_1, \bm{t}_J)}{\dd t_1} \bigg) \Bigg]_{t_1=0} \\
&+ \mathcal{P} \Bigg[ \frac{-\pi}{2 t \sin(2\pi t)} \sum_{j=2}^n 2^{3-2g-n} \, \omega^{\mathrm{WP}}_{g,n-1}(t_1, \bm{t}_{S \setminus \{j\}}) \bigg( \frac{1}{(t_1-t_j)^2} + \frac{1}{(t_1+t_j)^2} \bigg) \, \dd t_j \Bigg]_{t_1=0}
\end{align*}

Applying \cref{prop:principalparts} to the spectral curve of \cref{eq:WPvolumes}, in which $x(t) = \frac{t^2}{2}$ and $y(t) = \frac{1}{2\pi} \sin(2\pi t)$, equates the right side of the equation above with $2^{2-2g-n} \omega^{\mathrm{WP}}_{g,n}(t_1, \ldots, t_n)$. This completes the induction and the proof of the proposition.
\end{proof}

Now use \cref{prop:omegalimit} to relate Okuyama's discrete volumes with the Weil--Petersson volumes. This allows us to deduce \cref{thm:WPvolumes}, which states that for $(g,n) \neq (0,1)$ or $(0,2)$,
\[
\lim_{\lambda \to 0} \lambda^{6g-6+2n} \, {N}^q_{g,n} \Big( \frac{L_1}{\lambda}, \ldots, \frac{L_n}{\lambda} \Big) = 2^{3-2g-n} \, V_{g,n}(L_1, \ldots, L_n).
\]

\begin{proof}[Proof of \cref{thm:WPvolumes}]
Recall that \cref{prop:preproof1} allows us to write
\begin{equation} \label{eq:omegaexpansion}
\omega_{g,n}(z_1, \ldots, z_n) = \sum_{\substack{k_1, \ldots, k_n \geq 0 \\ k_1 + \cdots + k_n \leq 6g-6+2n \\ a_1, \ldots, a_n = \pm 1}} C^{a_1, \ldots, a_n}_{k_1, \ldots, k_n} \big( \zq(2), \zq(4), \ldots \big) \, \frac{\dd z_1}{(z_1-a_1)^{k_1+2}} \cdots \frac{\dd z_n}{(z_n-a_n)^{k_n+2}},
\end{equation}
where $C^{a_1, \ldots, a_n}_{k_1, \ldots, k_n} \big( \zq(2), \zq(4), \ldots \big)$ is a polynomial in $\zq(2), \zq(4), \ldots$ of $\zeta$-degree at most $6g-6+2n-\sum k_i$.

Now multiply both sides of \cref{eq:omegaexpansion} by $\lambda^{6g-6+3n}$ before taking the limit $\lambda \to 0$, while using $z_i = \lambda t_i + 1$.
\begin{align*}
& \lim_{\lambda \to 0} \lambda^{6g-6+3n} \, \omega_{g,n}(z_1, \ldots, z_n) \\
&= \lim_{\lambda \to 0} \lambda^{6g-6+3n} \, \sum_{\substack{k_1, \ldots, k_n \geq 0 \\ k_1 + \cdots + k_n \leq 6g-6+2n \\ a_1, \ldots, a_n = \pm 1}} C^{a_1, \ldots, a_n}_{k_1, \ldots, k_n} \big( \zq(2), \zq(4), \ldots \big) \, \frac{\lambda \, \dd t_1}{(\lambda t_1+1-a_1)^{k_1+1}} \cdots \frac{\lambda \, \dd t_n}{(\lambda t_n+1-a_n)^{k_n+2}} \\
&= \sum_{\substack{k_1, \ldots, k_n \geq 0 \\ k_1 + \cdots + k_n \leq 6g-6+2n}} \lim_{\lambda \to 0} \lambda^{6g-6+2n-\sum k_i} C^{1, \ldots, 1}_{k_1, \ldots, k_n} \big( \zq(2), \zq(4), \ldots \big) \, \frac{\dd t_1}{t_1^{k_1+2}} \cdots \frac{\dd t_n}{t_n^{k_n+2}}
\end{align*}

Combining this equation with \cref{prop:omegalimit} yields
\[
\frac{\omega_{g,n}^{\mathrm{WP}}(t_1, \ldots, t_n)}{2^{2g-2+n}} = \sum_{\substack{k_1, \ldots, k_n \geq 0 \\ k_1 + \cdots + k_n \leq 6g-6+2n}} \lim_{\lambda \to 0} \lambda^{6g-6+2n-\sum k_i} C^{1, \ldots, 1}_{k_1, \ldots, k_n} \big( \zq(2), \zq(4), \ldots \big) \, \frac{\dd t_1}{t_1^{k_1+2}} \cdots \frac{\dd t_n}{t_n^{k_n+2}}.
\]
\Cref{eq:WPvolumes} equates $\omega^{\mathrm{WP}}_{g,n}$ with a Laplace transform of the Weil--Petersson volumes, so we can apply the inverse Laplace transform to the previous equation to obtain\footnote{Since Mirzakhani proved that $V_{g,n}(L_1, \ldots, L_n)$ is a polynomial in $L_1^2, \ldots, L_n^2$, it follows that the limit in \cref{eq:limit1} vanishes unless $k_1, \ldots, k_n$ are all even~\cite{mir07a}.}
\begin{equation} \label{eq:limit1}
\frac{V_{g,n}(L_1, \ldots, L_n)}{2^{2g-2+n}} = \sum_{\substack{k_1, \ldots, k_n \geq 0 \\ k_1 + \cdots + k_n \leq 6g-6+2n}} \lim_{\lambda \to 0} \lambda^{6g-6+2n-\sum k_i} C^{1, \ldots, 1}_{k_1, \ldots, k_n} \big( \zq(2), \zq(4), \ldots \big) \prod_{i=1}^n \frac{L_i^{k_i}}{(k_i+1)!}.
\end{equation}

On the other hand, recall \cref{eq:quasipolynomial} from the proof of \cref{thm:Mqzeta}, which allows us to write
\begin{align*}
N^q_{g,n}(b_1, \ldots, b_n) = \sum_{\substack{k_1, \ldots, k_n \geq 0 \\ k_1 + \cdots + k_n \leq 6g-6+2n \\ a_1, \ldots, a_n = \pm 1}} &C^{a_1, \ldots, a_n}_{k_1, \ldots, k_n} \big( \zq(2), \zq(4), \ldots \big) \notag \\
&\prod_{i=1}^n (-1)^{f(a_i, b_i, k_i)} \, \frac{(b_i+1)(b_i+2) \cdots (b_i+k_i)}{(k_i+1)!},
\end{align*}
where $f(1, b, k) = k$ and $f(-1, b, k) = b-1$. So we can now take the desired limit as follows.
\begin{align*}
&\lim_{\lambda \to 0} \lambda^{6g-6+2n} N^q_{g,n} \Big( \frac{L_1}{\lambda}, \ldots, \frac{L_n}{\lambda} \Big) \\
&= \sum_{\substack{k_1, \ldots, k_n \geq 0 \\ k_1 + \cdots + k_n \leq 6g-6+2n \\ a_1, \ldots, a_n = \pm 1}} \lim_{\lambda \to 0} \lambda^{6g-6+2n - \sum k_i} C^{a_1, \ldots, a_n}_{k_1, \ldots, k_n} \big( \zq(2), \zq(4), \ldots \big) \prod_{i=1}^n (-1)^{f(a_i,b_i,k_i)} \frac{L_i^{k_i}}{(k_i+1)!}
\end{align*}

By \cref{lem:equality}, the limit in the equation above is equal to zero unless we have $(a_1, a_2, \ldots, a_n) = (1, 1, \ldots, 1)$ or $(-1, -1, \ldots, -1)$. The fact that $x(z)$ and $y(z)$ are both odd functions of $z$ leads to the symmetry $\omega_{g,n}(z_1, \ldots, z_n) = \omega_{g,n}(-z_1, \ldots, -z_n)$, which in turn implies that $C^{a_1, \ldots, a_n}_{k_1, \ldots, k_n} = (-1)^{\sum (k_i-1)} C^{-a_1, \ldots, -a_n}_{k_1, \ldots, k_n}$. These observations together allow us to express the desired limit as follows.
\begin{align}
&\lim_{\lambda \to 0} \lambda^{6g-6+2n} N^q_{g,n} \Big( \frac{L_1}{\lambda}, \ldots, \frac{L_n}{\lambda} \Big) \notag \\
&= 2 \sum_{\substack{k_1, \ldots, k_n \geq 0 \\ k_1 + \cdots + k_n \leq 6g-6+2n}} \lim_{\lambda \to 0} \lambda^{6g-6+2n - \sum k_i} C^{1, \ldots, 1}_{k_1, \ldots, k_n} \big( \zq(2), \zq(4), \ldots \big) \prod_{i=1}^n \frac{L_i^{k_i}}{(k_i+1)!} \label{eq:limit2}
\end{align}

Comparing \cref{eq:limit1,eq:limit2} produces the desired equality.
\end{proof}

\section{Top degree terms} \label{sec:top}

In this section, we prove that the top degree part of Okuyama's quasi-polynomials coincide precisely with the $q$-deformations of the Weil--Petersson volumes previously constructed by the authors~\cite{do-nor25}.

Okuyama's quasi-polynomial $N^q_{g,n}(b_1, \ldots, b_n)$ has degree $6g-6+2n$ and we denote its top degree part by
\[
N^{q,\mathrm{top}}_{g,n}(b_1, \ldots, b_n) := N^{q}_{g,n}(b_1, \ldots, b_n) - [\,\text{terms of degree less than $6g-6+2n$}\,].
\]
A priori, $N^{q,\mathrm{top}}_{g,n}(b_1, \ldots, b_n)$ is a quasi-polynomial, but we will show that it is in fact a polynomial, no longer dependent on the parities of $b_1, \ldots, b_n$. Furthermore, the set of polynomials $N^{q,\mathrm{top}}_{g,n}(b_1, \ldots, b_n)$ satisfy a recursion between themselves. These are consequences of \cref{lem:topdegTR} below. We will need the Laplace transform of a polynomial $P(x_1, \ldots, x_n)$, defined by 
\[
\cl\{P\}(z_1, \ldots, z_n) = \int_0^\infty \!\!\! \cdots \! \int_0^\infty P(x_1, \ldots, x_n) \prod_{i=1}^n \exp(-z_ix_i) \, \dd x_1 \cdots \dd x_n,
\]
for $\mathrm{Re}(z_i) > 0$. This extends to a meromorphic function on $\bc^n$ that is a polynomial in $z_i^{-1}$. We will show that the Laplace transforms of the polynomials $N^{q,\mathrm{top}}_{g,n}(b_1, \ldots, b_n)$ arise as correlation differentials of a particular spectral curve.

The spectral curve is constructed from Okuyama's spectral curve of \cref{eq:okuyama}, using only the top coefficients $a_{2k,k} = \frac{4^k}{k}$ of the coefficients $a_{mk}$ defined by the expansion of $\log(2 y^M(z) x'(z))$ in \cref{lem:expansions}. It is given by the data
\begin{equation} \label{eq:topcurve} 
\mathcal{S}^{\mathrm{top}} = \left( \mathbb{CP}^1, ~x(z) = \frac{1}{2} z^2, ~y^{\mathrm{top}}(z) = z \exp \bigg( -\sum_{m=1}^\infty \frac{\zeta_q(2m)}{m} (4z^2)^m \bigg), ~B(z, z') = \frac{\dd z \, \dd z'}{(z-z')^2} \right).
\end{equation}

Eynard and Orantin showed in \cite{eyn-ora09} that for a general spectral curve, the asymptotic behaviour -- equivalently, the largest order principal part -- of the correlation differential $\omega_{g,n}$ near a zero of $\dd x$ is given by the corresponding correlation differential $\omega^{\mathrm{Airy}}_{g,n}$ for the local model of the Airy spectral curve 
\begin{equation} \label{eq:airycurve} 
\mathcal{S}^{\mathrm{Airy}} = \left( \mathbb{CP}^1, ~x(z) = \frac{1}{2} z^2, ~y^{\mathrm{top}}(z) = z, ~B(z, z') = \frac{\dd z \, \dd z'}{(z-z')^2} \right).
\end{equation}
In particular, at the zero $\alpha$ of $\dd x$, we have
\[
\omega_{g,n} \sim c_\alpha^{2g-2+n} \, \omega^{\mathrm{Airy}}_{g,n},
\]
for $c_\alpha = \displaystyle\mathop{\mathrm{Res}}_\alpha \frac{\dd y \cdot \dd y}{\dd x}$. For Okuyama's spectral curve, one can calculate $c_1 = \frac{1}{2}$ and $c_{-1} = -\frac{1}{2}$.

The Airy spectral curve has correlation differentials that store intersection numbers of tautological classes on the moduli space of stable curves via the formula
\[
\omega_{g,n}^{\mathrm{Airy}} = \frac{\partial}{\partial z_1} \cdots \frac{\partial}{\partial z_n} \cl \bigg\{ \int_{\overline{\modm}_{g,n}} \!\!\! \exp \bigg( \frac{1}{2} \sum_{i=1}^n \psi_i L_i^2 \bigg) \bigg\} \, \dd z_1 \cdots \dd z_n.
\]
In particular, for a rational spectral curve with $x(z) = z+\frac{1}{z}$, the previous considerations show that the top degree part of the quasipolynomials are polynomials with known coefficients. The following lemma generalises this to the asymptotic $q\to1$ behaviour near $z=\pm1$ of the correlators of Okuyama's spectral curve.

\begin{lemma} \label{lem:topdegTR}
Topological recursion applied to the spectral curve $\mathcal{S}^{\mathrm{top}}$ of \cref{eq:topcurve} produces the correlation differentials
\[
\omega^{\mathrm{top}}_{g,n}(z_1, \ldots z_n) = 2^{2g-3+n} \frac{\partial}{\partial z_1} \cdots \frac{\partial}{\partial z_n} \cl \big\{ N^{q,\mathrm{top}}_{g,n}(b_1, \ldots, b_n) \big\} \, \dd z_1 \cdots \dd z_n.
\]
\end{lemma}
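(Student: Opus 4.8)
The plan is to prove the lemma by induction on $2g-2+n$, by identifying the ``top-degree part'' of Okuyama's recursion \cref{eq:TRsimple} with the topological recursion for $\mathcal{S}^{\mathrm{top}}$. The first and most delicate step is to pin down the top-degree part of the recursion factor $\frac{-1}{2\,y^M(z_1)\,x'(z_1)}$, upgrading the degree \emph{estimates} from the proofs of \cref{prop:preproof1,lem:equality} to exact statements. In the grading where $(z_1-1)^{-k-2}\,\dd z_1$ has degree $k$, one checks that multiplying by this factor and taking a principal part at $z_1=1$ raises the total degree by \emph{exactly} $2$, and that the only contribution attaining this bound comes from the leading Laurent coefficient $\tfrac14$ of $\frac{z_1^3}{(z_1^2-1)^2}$ at $z_1=1$ together with the $\zeta$-homogeneous exponential obtained by retaining only the top coefficients $a_{2k,k}=\frac{4^k}{k}$ of \cref{lem:expansions}. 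Indeed, that exponential assigns $\zeta$-degree equal to the power of $(z_1-1)$, so attaining $\zeta$-degree $m+2$ in the coefficient of $(z_1-1)^m$ forces the prefactor to supply the pole $(z_1-1)^{-2}$. Hence the top-degree recursion factor near $z_1=1$ is
\[
\frac{1}{4(z_1-1)^2}\,\exp\bigg[\sum_{k=1}^\infty\frac{4^k}{k}\,\zq(2k)\,(z_1-1)^{2k}\bigg],
\]
with the analogous expression in $(z_1+1)$ near $z_1=-1$.

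Next I would compare this with $\mathcal{S}^{\mathrm{top}}$. The recursion factor $\frac{1}{2\,y^{\mathrm{top}}(z)\,z}$ for the curve of \cref{eq:topcurve} equals $\frac{1}{2z^2}\exp[\sum_k\frac{4^k}{k}\zq(2k)\,z^{2k}]$, so under the shift $z_1\mapsto z_1-1$ carrying the branch point at $0$ to $1$, the top-degree Okuyama factor is exactly $\frac12$ times the $\mathcal{S}^{\mathrm{top}}$ factor; this constant $\frac12$ is precisely $c_1=\mathop{\mathrm{Res}}_1\frac{\dd y\cdot\dd y}{\dd x}$, and $-\frac12=c_{-1}$ plays the same role at $-1$. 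By \cref{lem:equality} only the principal parts with all $a_i$ equal survive in top degree, so the top-degree part of \cref{eq:TRsimple} decouples into two copies, at $z_1=\pm1$, of the $\mathcal{S}^{\mathrm{top}}$ recursion, each kernel carrying a factor $\frac12$. Since assembling $\omega_{g,n}$ uses $2g-2+n$ kernels, an induction on $2g-2+n$ (with base cases $(0,3)$ and $(1,1)$ read off from \cref{eq:basecases}) shows that the all-$(+1)$ top-degree part of $\omega^q_{g,n}$, translated to the origin, coincides with $(\tfrac12)^{2g-2+n}\,\omega^{\mathrm{top}}_{g,n}$ up to the $z\mapsto-z$ symmetry of $\mathcal{S}^{\mathrm{top}}$.

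Finally I would convert this pole-at-the-origin statement into the Laplace-transform form of the lemma. Expanding $\frac{\dd z}{(z-a)^{k+2}}$ at $z=0$ as in \cref{eq:quasipolynomial} shows that a factor $\frac{\dd z}{(z-1)^{k+2}}$ contributes the leading monomial $\frac{(-1)^k b^k}{(k+1)!}$ to $N^q_{g,n}$; combining the $a=+1$ and $a=-1$ branches via the symmetry $C^{a_1,\ldots,a_n}_{k_1,\ldots,k_n}=(-1)^{\sum(k_i-1)}C^{-a_1,\ldots,-a_n}_{k_1,\ldots,k_n}$ collapses the parity dependence on the support $b_1+\cdots+b_n$ even, producing the single polynomial
\[
N^{q,\mathrm{top}}_{g,n}(b_1,\ldots,b_n)=2\sum_{k_1,\ldots,k_n}\widehat C^{\,1,\ldots,1}_{k_1,\ldots,k_n}\prod_{i=1}^n\frac{(-1)^{k_i}\,b_i^{k_i}}{(k_i+1)!},
\]
where $\widehat C$ denotes the top-$\zeta$-degree part of the coefficient; this is exactly where the asserted polynomiality of $N^{q,\mathrm{top}}_{g,n}$ emerges. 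Applying $\frac{\partial}{\partial z_1}\cdots\frac{\partial}{\partial z_n}\cl\{\,\cdot\,\}$ and using $\cl\{b^k\}=k!\,z^{-k-1}$ turns this into $2(-1)^n\sum_k\widehat C^{\,1,\ldots,1}_k(-1)^{\sum k_i}\prod_i\frac{\dd z_i}{z_i^{k_i+2}}$, which agrees with $\omega^{\mathrm{top}}_{g,n}$ after supplying the factor $2^{2g-2+n}$ from the kernels; the leftover factor $2$, from two branches collapsing to one polynomial, combines to give the stated normalisation $2^{2g-3+n}$.

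I expect the main obstacle to be the exact degree bookkeeping of the first step: upgrading the degree \emph{inequalities} of \cref{prop:preproof1,lem:equality} to the precise claim that the kernel raises total degree by exactly $2$ and that this top-degree contribution localises to the single explicit factor displayed above. Once this localisation is established, the identification with $\mathcal{S}^{\mathrm{top}}$ and the tracking of the constants $c_{\pm1}=\pm\frac12$ and the two-branch factor are bookkeeping; the one subtlety to handle with care is reconciling the branch-point shift, which carries no signs, with the Laplace transform in the statement, which introduces $(-1)^{k+1}$ per variable, a discrepancy absorbed by the $z\mapsto-z$ symmetry of $\mathcal{S}^{\mathrm{top}}$.
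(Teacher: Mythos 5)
Your proposal is correct and takes essentially the same route as the paper's proof: both isolate the homogeneous top-degree part of Okuyama's recursion kernel, namely $\tfrac{1}{4}s^{-2}\exp\big[\sum_{k\geq 1}\tfrac{4^k}{k}\,\zq(2k)\,s^{2k}\big]$, recognise it as $\tfrac12$ times the kernel of $\mathcal{S}^{\mathrm{top}}$ (producing the factor $2^{-(2g-2+n)}$ across the $2g-2+n$ kernel applications), use the top-degree decoupling of the two branch points (\cref{lem:equality}) together with the factor $2$ from the two branches collapsing into the single polynomial $N^{q,\mathrm{top}}_{g,n}$, and finish by matching Laplace transforms. If anything, your write-up is more careful than the paper's, both in localising the top-degree part of the kernel exactly and in flagging the $(-1)^n$ sign introduced by the Laplace transform, which the paper's own proof absorbs only implicitly via the skew-symmetry of $\omega^{\mathrm{top}}_{g,n}$ under $z_i \mapsto -z_i$.
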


\begin{proof}
The highest order terms $\omega^{M,\mathrm{top}}_{g,n}(z_1, \ldots, z_n)$$\frac{\partial}{\partial z_1} \cdots \frac{\partial}{\partial z_n}\cl\{N^{q,\mathrm{top}}_{g,n}(b_1, \ldots, b_n)\} \, \dd z_1 \cdots \dd z_n$. Essentially we have $\omega_{g,n}^M\sim2\cdot(1/2)^{2g-2+n}\omega_{g,n}^{\mathrm{top}}$ where the scale $c_1=1/2$ introduces a factor of $(1/2)^{2g-2+n}$ and the factor of 2 comes from the initial cases $\omega_{0,3}^M=\omega_{0,3}^{\mathrm{top}}$ and $\omega_{1,1}^M=\omega_{1,1}^{\mathrm{top}}$.

In more detail, by homogeneity of the recursion, the top degree coefficients from the expansion from Lemma 3.3, given by $\exp \bigg( \sum_{j=1}^\infty\frac{\zeta_q(2j)}{j} (4z^2)^j \bigg)$, produces a recursion solely between the highest order terms $\omega^{M,\mathrm{top}}_{g,n}(z_1, \ldots, z_n)$ of $\omega^M_{g,n}(z_1, \ldots, z_n)$, which consist of highest order terms of $\omega^M_{g,n}(1+\frac{t_1}{1-q}, \ldots, 1+\frac{t_n}{1-q})$ as $q\to1$. The recursion of the principal parts around $z=1$ becomes the following. Put $s_j=z_j-1$ and $\bm{s}_K=(s_2, \ldots, s_n)$.
\begin{align} 
\omega^{M,\mathrm{top}}_{g,n}(s_1, \bm{s}_K)=&\mathcal{P}\left[\frac{1}{4s_1} \exp \bigg( \sum_{j=1}^\infty\frac{\zeta_q(2j)}{j} (4s_1^2)^j \bigg)\omega^{M,\mathrm{top}}_{g-1,n+1}(s_1, s_1, \bm{s}_K) \right]_{s_1=0}\\
&+\mathcal{P}\Big[\frac{1}{4s_1} \exp \bigg( \sum_{j=1}^\infty\frac{\zeta_q(2j)}{j} (4s_1^2)^j \bigg)\mathop{\sum_{g_1+g_2=g}}_{I \sqcup J = K}\hspace{-1mm}\omega^{M,\mathrm{top}}_{g_1,|I|+1}(s_1, \bm{s}_I) \, \omega^{M,\mathrm{top}}_{g_2,|J|+1}(s_1, \bm{s}_J) \Big]_{s_1=0}
\nonumber\\
&+\sum_{j=2}^n\mathcal{P}\left[\frac{1}{4s_1} \exp \bigg( \sum_{j=1}^\infty\frac{\zeta_q(2j)}{j} (4s_1^2)^j \bigg)\frac{\omega^{M,\mathrm{top}}_{g,n-1}(s_1,s_{K\setminus\{ j\}})}{(s_j-s_1)^2}\right]_{s_1=0}.
\nonumber
\end{align}
Topological recursion of the spectral curve $\mathcal{S}^{\mathrm{top}}$ in terms of principal parts has the same form, although with factor $\frac14s^{-1}\exp ( L_q(s) )$ replaced by $\frac{1}{2y^{\mathrm{top}}(s)x'(s)}=\frac12s^{-1}\exp (L_q(s))$ for $L_q(s)=\sum_{j=1}^\infty\frac{\zeta_q(2j)}{j} (4s^2)^j$.
This produces a factor of $2^{2g-2+n}$ between the correlators. The initial cases are calculated in the beginning of the proof of Proposition~\ref{prop:omegalimit} to give $\omega_{0,3}^M=\frac12\omega_{0,3}^{\mathrm{top}}$ and $\omega_{1,1}^M=\frac12\omega_{1,1}^{\mathrm{top}}$, and the initial conditions calculates $\omega_{0,3}^M$ and $\omega_{1,1}^M$
\end{proof}

\subsection{The $q$-deformation of Weil--Petersson volumes}

The $q$-deformation of Mirzakhani's recursion for Weil--Petersson volumes defined in \cite{do-nor25} produce polynomials $V^q_{g,n}(L_1, \ldots, L_n) \in\bq[[q]][L_1^2, \ldots, L_n^2]$ by replacing the function $H(x,y)$ in Mirzakhani's recursion \eqref{eq:mirzakhani} by
\begin{equation} \label{eq:Hq}
H_q(x,y) = \frac{1}{2} \sum_{m=1}^\infty(-1)^{m-1} q^{m^2/2} (q^{m/2} + q^{-m/2}) \Big(e^{\frac{1}{2}(x+y) (q^{m/2}-q^{-m/2})} + e^{\frac{1}{2}(x-y) (q^{m/2}-q^{-m/2})} \Big)
\end{equation}
and using the same initial conditions except for $V^q_{1,1}(L)=\frac{1}{48}L^2+\frac{1}{2}\zeta_q(2)$. To guarantee integrability, $H_q(x,y)$ is to be understood as a series in $y^2$.

The Laplace transform of the recursion \eqref{eq:mirzakhani} requires the Laplace transform of the linear transformations given by the double and single integrals in the recursion. Such Laplace transforms appeared in \cite[Lemma 6.10, 6.11]{nor20} and we repeat these calculations in the following two lemmas.

\begin{lemma} \label{lem:dint}
If $P(x,y)$ is a polynomial that is an odd function of both $x$ and $y$, then
\[
\cl \bigg\{\int_0^\infty \!\!\! \int_0^\infty \dd x \, \dd y \, H_q(x+y, L) \, P(x,y) \bigg\} = \mathcal{P} \bigg[ z^{-1} \exp \bigg( \sum_{j=1}^\infty\frac{\zeta_q(2j)}{j} (4z^2)^j \bigg) \cl\{ P \}(z,z) \bigg]_{z=0}.
\]
\end{lemma}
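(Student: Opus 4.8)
The plan is to evaluate the left-hand side as an explicit formal Laurent series in $z^{-1}$ -- which is precisely the meaning of reading $H_q$ as a series in $L^2$ -- and then match it against the principal part on the right, coefficient by coefficient. First I would substitute the definition \eqref{eq:Hq} and combine the two exponentials: writing $w_m = \tfrac12(q^{-m/2}-q^{m/2})$, one has
\[
e^{\frac12((x+y)+L)(q^{m/2}-q^{-m/2})} + e^{\frac12((x+y)-L)(q^{m/2}-q^{-m/2})} = 2\, e^{-(x+y)w_m}\cosh(L w_m),
\]
which separates the $(x,y)$-dependence from the $L$-dependence. Since $P$ is a polynomial and $w_m>0$, the double integral over $x,y$ converges to $\cl\{P\}(w_m,w_m)$, and the Laplace transform in $L$ of $\cosh(L w_m)$, expanded as a series in $L^2$, equals $\sum_{k\geq0} w_m^{2k}\, z^{-(2k+1)}$. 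This yields
\[
\cl \bigg\{\int_0^\infty \!\!\! \int_0^\infty \dd x \, \dd y \, H_q(x+y, L) \, P(x,y) \bigg\}(z) = \sum_{m=1}^\infty \epsilon_m\, \frac{z}{z^2-w_m^2}\, \cl\{P\}(w_m,w_m),
\]
read as a series in $z^{-1}$, where $\epsilon_m = (-1)^{m-1} q^{m^2/2}(q^{m/2}+q^{-m/2})$.

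The bridge to the right-hand side is the product formula
\[
\exp \bigg( \sum_{j=1}^\infty \frac{\zeta_q(2j)}{j}(4z^2)^j \bigg) = \prod_{m=1}^\infty \frac{1}{1-z^2/w_m^2},
\]
obtained by taking logarithms and using $\zeta_q(2j)=\sum_m q^{mj}/(1-q^m)^{2j}$ together with $w_m^{-2}=4q^m/(1-q^m)^2$. Call this series $E(z)$, set $r_m=w_m^{-2}$, and write $E(z)=\sum_l e_l z^{2l}$ and $\cl\{P\}(z,z)=\sum_j c_j z^{-2j}$ (only even negative powers occur, since $P$ is odd in each variable). Extracting the coefficient of $z^{-(2k+1)}$ from both $\mathcal{P}[z^{-1}E(z)\cl\{P\}(z,z)]_{z=0}$ and the series above, and matching the coefficient of each $c_j$, reduces the lemma to the single family of $q$-series identities
\[
\sum_{m=1}^\infty \epsilon_m\, r_m^{\,n} = e_n \quad (n\geq 0), \qquad \sum_{m=1}^\infty \epsilon_m\, r_m^{\,n} = 0 \quad (n<0),
\]
where $e_n=[z^{2n}]E(z)$ is the complete homogeneous symmetric function $h_n(r_1,r_2,\dots)$.

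For $n\geq0$ I would use the partial-fraction expansion $\prod_m (1-u r_m)^{-1} = \sum_m A_m (1-u r_m)^{-1}$, with $A_m = \prod_{m'\neq m}(1-w_m^2/w_{m'}^2)^{-1}$, and extract $[u^n]$. The crux is then the $q$-product evaluation of $A_m$: using $w_m^2-w_{m'}^2 = (q^{m'}-q^m)(1-q^{m+m'})/(4q^{m+m'})$, the product telescopes through its $(1-q^k)$ factors to $(-1)^{m-1}q^{\binom m2}(1+q^m)=\epsilon_m$, so $A_m=\epsilon_m$ and hence $\sum_m\epsilon_m r_m^n=e_n$. For $n<0$, the vanishing $\sum_m \epsilon_m r_m^{-p}=0$ for $p\geq1$ -- equivalently, the assertion that the left-hand series is a genuine (finite) principal part -- follows by extending $\epsilon_m$ and $w_m$ to $m\in\mathbb{Z}$, whereupon these negative moments become bilateral theta sums $\sum_{m\in\mathbb{Z}}(-1)^m q^{(m+\ell/2)^2/2}$ with $\ell$ odd, each annihilated by the reflection $m\mapsto -m-\ell$.

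I expect the main obstacle to be the $q$-telescoping identity $A_m=\epsilon_m$ together with the justification of the formal manipulations. Because $w_m\to\infty$ as $m\to\infty$, the point $z=\infty$ is an accumulation of poles, so a naive global residue calculus is unavailable; instead the identities should be read in the ring of $q$-series (formal power series in $q$), where at each order in $q$ the sums over $m$ are finite and the product telescoping is rigorous, with $A_m$ obtained as the $q$-adic limit of its truncations $\prod_{m'\neq m,\,m'\leq N}(1-w_m^2/w_{m'}^2)^{-1}$.
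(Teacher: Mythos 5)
Your proposal is correct, but it takes a genuinely different route from the paper. The paper's proof is short because it outsources the key evaluation to \cite[Proposition~4]{do-nor25}: namely that $\int_0^\infty \frac{x^{2k-1}}{(2k-1)!} H_q(x,y)\,\dd x = \sum_{n=0}^{k} b_n \frac{y^{2k-2n}}{(2k-2n)!}$ with $\sum b_n z^{2n} = \exp\big(\sum_m \tfrac{\zeta_q(2m)}{m}(4z^2)^m\big)$; it then reduces to monomials $P = \frac{x^{2i-1}y^{2j-1}}{(2i-1)!\,(2j-1)!}$, collapses the double integral to a single one via the substitution $u = x+y$, $v = x$ (a Beta-function identity), and matches Laurent coefficients. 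You instead prove everything from scratch: expanding $H_q$ termwise in $m$, separating the $(x,y)$- and $L$-dependence via $\cosh(Lw_m)$, and reducing the lemma to the moment identities $\sum_m \epsilon_m r_m^n = e_n$ for $n \geq 0$ and $\sum_m \epsilon_m r_m^{-p} = 0$ for $p \geq 1$. I checked the two pillars of your argument and both hold: the $q$-Pochhammer telescoping does give $A_m = (-1)^{m-1} q^{\binom{m}{2}}(1+q^m) = \epsilon_m$ (the infinite product over $m' \neq m$ simplifies to $\frac{(1-q^{2m})}{(1-q^m)} = 1+q^m$ after cancelling $(q;q)$-factors), and the negative moments do vanish because the unilateral sum $\sum_{m\geq 1}\epsilon_m w_m^{2p}$ equals the bilateral sum $\sum_{m\in\mathbb{Z}}(-1)^{m-1}q^{\binom{m}{2}}w_m^{2p}$, whose monomial pieces are theta sums with odd characteristic, killed by the reflection $m \mapsto -m-\ell$. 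Your product formula $\exp\big(\sum_j \tfrac{\zeta_q(2j)}{j}(4z^2)^j\big) = \prod_m (1 - z^2/w_m^2)^{-1}$ is also correct and is implicitly the same structure exploited in \cref{lem:expansions}. What your approach buys is self-containedness — you effectively re-derive the content of the cited proposition, exposing the $q$-series mechanism (partial fractions plus theta reflection) that makes the kernel $H_q$ work; what the paper's approach buys is brevity and the clean reduction of the two-variable statement to the one-variable one via the Beta integral, which your separation-of-variables step replaces. Your closing caveat about rigor is apt and handled correctly: since $w_m \to \infty$, the sum over $m$ does not converge pointwise in $L$, so one must expand in $L^2$ first (exactly the paper's convention for $H_q$) and read the moment identities $q$-adically, where truncation in $m$ is legitimate because $\epsilon_m$ and $A_m^{(N)}$ have $q$-valuation $\binom{m}{2}$ uniformly in $N$.
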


\begin{proof}
We prove the statement for $P(x,y) = \frac{x^{2i-1} \, y^{2j-1}}{(2i-1)! \, (2j-1)!}$, whose Laplace transform is $\cl\{ P \}(z_1, z_2) = \frac{1}{z_1^{2i} \, z_2^{2j}}$, and then extend by linearity to obtain the desired result. It is proven in \cite[Proposition~4]{do-nor25} that for each positive integer $k$,
\[
\frac{F_{2k-1}(y)}{(2k-1)!} := \int_0^\infty \!\! \frac{x^{2k-1}}{(2k-1)!} \, H_q(x,y) \, \dd x = \sum_{n=0}^{k} b_n \frac{y^{2k-2n}}{(2k-2n)!},
\]
where $b_0, b_1, b_2, \ldots \in \bq[\zeta_q(2), \zeta_q(4), \ldots]$ are defined by
\[
\sum_{n=0}^\infty b_n z^{2n} = \exp \bigg(\sum_{m=1}^\infty\frac{\zeta_q(2m)}{m} (4z^2)^m \bigg) = 1 + 4\zeta_q(2) z^2 + 8(\zeta_q(2)^2+\zeta_q(4)) z^4 + \cdots.
\]

Use the change of coordinates $u=x+y$ and $v=x$ to obtain
\[
\int_0^\infty \!\!\! \int_0^\infty \frac{x^{2i-1} \, y^{2j-1}}{(2i-1)! \, (2j-1)!} \, H_q(x+y,L) \, \dd x \, \dd y = \frac{F_{2i+2j-1}(L)}{(2i+2j-1)!} = \sum_{n=0}^{i+j} b_{i+j-n} \frac{L^{2n}}{(2n)!}.
\]
Hence, its Laplace transform is
\[
\cl \bigg\{ \int_0^\infty \!\!\! \int_0^\infty \frac{x^{2i-1} \, y^{2j-1}}{(2i-1)! \, (2j-1)!} H_q(x+y,L) \, \dd x \, \dd y \bigg\} = \sum_{n=0}^{i+j}\frac{b_{i+j-n}}{z^{2n+1}},
\]
which coincides with the principal part of 
\[
z^{-1} \exp \bigg( \sum_{m=1}^\infty \frac{\zeta_q(2m)}{m} (4z^2)^m \bigg) \cl \{ P \}(z,z) \sim \frac{1}{z^{2i+2j+1}} \sum_{n=0}^{\infty} b_n z^{2n},
\] 
where $\sim$ means the equality of Laurent series at $z=0$.
\end{proof}

\begin{lemma} \label{lem:sint}
If $P(x)$ is a polynomial that is an odd function of $x$, then
\[
\cl \bigg\{ \frac{1}{2} \int_0^\infty \dd x \, \big( H_q(L_1+L_2, x) + H_q(L_1-L_2, x) \big) \, P(x) \bigg\} = \mathcal{P} \bigg[ z_1^{-1} \exp \bigg( \sum_{j=1}^\infty \frac{\zeta_q(2j)}{j} (4z_1^2)^j \bigg) \frac{\cl\{P\}(z_1)}{z_2-z_1} \bigg]_{z_1=0}^-.
\]
Here, we use $\mathcal{P}[\omega(z)]^-_{z=\alpha}$ to denote the odd part of the principal part under the involution $z \mapsto -z$.
\end{lemma}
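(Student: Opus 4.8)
The plan is to follow the template of the proof of \cref{lem:dint}, reducing to a convenient basis and invoking the same integral identity from \cite{do-nor25}. By linearity of both sides in $P$, it suffices to treat the odd monomials $P(x) = \frac{x^{2k-1}}{(2k-1)!}$ for $k \geq 1$, whose Laplace transform is $\cl\{P\}(z) = z^{-2k}$. For these, I would use the formula $\int_0^\infty \frac{x^{2k-1}}{(2k-1)!} \, H_q(x, y) \, \dd x = \sum_{n=0}^k b_n \frac{y^{2k-2n}}{(2k-2n)!}$ from \cite{do-nor25} (recorded in the proof of \cref{lem:dint}), where $b_n$ are the coefficients of $\exp \big( \sum_{m=1}^\infty \frac{\zeta_q(2m)}{m} (4z^2)^m \big)$. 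Here the integration variable occupies the first, exponentially decaying slot of $H_q$, which is what makes the integral convergent, while the arguments $L_1 \pm L_2$ occupy the second slot.

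First I would substitute $y = L_1 + L_2$ and $y = L_1 - L_2$ into this identity and average, so that the left-hand side of the lemma becomes the explicit polynomial
\[
g(L_1, L_2) = \frac{1}{2} \sum_{n=0}^k b_n \frac{(L_1+L_2)^{2k-2n} + (L_1-L_2)^{2k-2n}}{(2k-2n)!}.
\]
The key structural observation is that $g$ is \emph{even} in each of $L_1$ and $L_2$ separately. Writing $\frac{1}{2} \big[ (L_1+L_2)^{2m} + (L_1-L_2)^{2m} \big] = \sum_{j=0}^m \binom{2m}{2j} L_1^{2m-2j} L_2^{2j}$ with $m = k-n$ and applying $\cl\{L^a\}(z) = a!\,z^{-a-1}$ in both variables, I would obtain
\[
\cl\{g\}(z_1, z_2) = \sum_{n=0}^k b_n \sum_{j=0}^{k-n} \frac{1}{z_1^{2(k-n-j)+1} \, z_2^{2j+1}},
\]
a sum supported only on odd powers of $z_1$ (and of $z_2$), reflecting the evenness of $g$.

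It then remains to show that the right-hand side of the lemma reproduces exactly this expression. Expanding the kernel near $z_1 = 0$ gives $z_1^{-1} \exp \big( \sum_{j \geq 1} \frac{\zeta_q(2j)}{j} (4z_1^2)^j \big) \frac{\cl\{P\}(z_1)}{z_2-z_1} = z_1^{-2k-1} \big( \sum_n b_n z_1^{2n} \big) \sum_{\ell \geq 0} z_1^\ell z_2^{-\ell-1}$, using the Laurent expansion $\frac{1}{z_2-z_1} = \sum_{\ell \geq 0} z_1^\ell z_2^{-\ell-1}$. Retaining the negative powers of $z_1$ (the principal part) and then the odd powers of $z_1$, namely the $\ell$ even terms selected by $\mathcal{P}[\,\cdot\,]^-$, I would recover precisely $\sum_n b_n \sum_{j=0}^{k-n} z_1^{2(n+j-k)-1} z_2^{-2j-1}$, which matches $\cl\{g\}$. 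Summing over the basis completes the proof.

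The main thing to get right, and the point where the single-integral case genuinely differs from \cref{lem:dint}, is the role of the odd-part projection $\mathcal{P}[\,\cdot\,]^-$. The geometric expansion of the bare kernel $(z_2-z_1)^{-1}$ produces terms of \emph{both} parities in $z_1$, whereas the actual transform $\cl\{g\}$, by evenness of $g$ in $L_1$, contains only odd powers of $z_1$; the even-power (odd-$\ell$) terms are spurious and must be discarded, which is exactly the effect of taking the odd part under $z_1 \mapsto -z_1$. I would also take care to justify the interchange of summation and integration, since all series converge for $0 < q < 1$ because $q^{m/2} - q^{-m/2} < 0$ controls the first-slot exponential, and to record the convention, already implicit in \cref{lem:dint}, that the integration variable sits in the convergent first argument of $H_q$.
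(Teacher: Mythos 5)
Your proof is correct and follows essentially the same route as the paper's: reduction by linearity to the odd monomials $P(x)=\frac{x^{2k-1}}{(2k-1)!}$, the $F_{2k-1}$ evaluation from \cite{do-nor25}, the binomial expansion of $\frac{1}{2}\big[(L_1+L_2)^{2m}+(L_1-L_2)^{2m}\big]$, and the geometric expansion $\frac{1}{z_2-z_1}=\sum_{\ell\geq 0} z_1^{\ell} z_2^{-\ell-1}$ for $|z_1|<|z_2|$, with the odd principal-part projection discarding exactly the spurious even-parity (odd-$\ell$) terms. Your explicit remark about placing the integration variable in the exponentially decaying slot of $H_q$ is also well taken: the lemma as stated writes $H_q(L_1\pm L_2,x)$, while the proof (yours and the paper's) uses the convergent convention $\int_0^\infty x^{2k-1} H_q(x,y)\,\dd x = F_{2k-1}(y)$, which is the convention consistent with \cref{lem:dint} and with Mirzakhani's recursion \eqref{eq:mirzakhani}.
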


\begin{proof}
We prove the statement for $P(x) = \frac{x^{2k-1}}{(2k-1)!}$, whose Laplace transform is $\cl \{P\}(z) = \frac{1}{z^{2k}}$, and then extend by linearity to obtain the desired result. 

Then
\begin{align*}
\frac12\int_0^\infty \hspace{-2mm} \dd x (H_q(L_1+L_2,x)+H_q(L_1-L_2,&x))\frac{x^{2k-1}}{(2k-1)!}=\frac12\frac{F_{2k-1}(L_1+L_2)}{(2k-1)!}+\frac12\frac{F_{2k-1}(L_1-L_2)}{(2k-1)!}\\
&=\frac12\sum_{\epsilon=\pm1}\sum_{m=0}^{k}\frac{(L_1+\epsilon L_2)^{2m}}{(2m)!}b_{k-m}
=\hspace{-2mm}\sum_{i+j\leq k}\frac{L_1^{2i}L_2^{2j}}{(2i)!(2j)!}b_{k-i-j}.
\end{align*}
Hence its Laplace transform is:
\[
\cl\left\{\frac12\int_0^\infty \hspace{-2mm} \dd x (H_q(L_1+L_2,x)+H_q(L_1-L_2,x))\frac{x^{2k-1}}{(2k-1)!}\right\}=\sum_{i+j\leq k}\frac{1}{z_1^{2i+1}z_2^{2j+1}}b_{k-i-j}
\]
which coincides with the odd principal part in $z_1$ of 
\[
z_1^{-1}\exp \bigg( \sum_{m=1}^\infty\frac{\zeta_q(2m)}{m} (4z_1^2)^m \bigg)\frac{\cl\left\{\frac{x^{2k-1}}{(2k-1)!}\right\}(z_1)}{(z_2-z_1)}\sim \sum_{n=0}^{\infty}b_nz_1^{2n-1}\sum_{\ell=0}^\infty\frac{z_1^\ell}{z_2^{\ell+1}}\frac{1}{z_1^{2k}}
\]
where $\sim$ means the Laurent series at $z_1=0$ for fixed $z_2$, and $|z_1|<|z_2|$.
\end{proof}

The following proposition is analogous to the result by Eynard and Orantin \cite{eyn-ora07b} that the spectral curve $x=\frac12z^2$, $y=\frac{\sin(2\pi z)}{2\pi}$ stores the Weil--Petersson volumes.

\begin{proposition} \label{specqvol}
Topological recursion applied to the spectral curve $\mathcal{S}^{\mathrm{top}}$ of \cref{eq:topcurve} produces the correlation differentials
\[
\omega_{g,n} = \frac{\partial}{\partial z_1} \cdots \frac{\partial}{\partial z_n} \cl \big\{ V^q_{g,n}(L_1, \ldots, L_n) \big\} \, \dd z_1 \cdots \dd z_n.
\]
\end{proposition}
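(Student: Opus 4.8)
The plan is to prove that the Laplace transform of the $q$-deformed Mirzakhani recursion defining $V^q_{g,n}$ reproduces the topological recursion for $\mathcal{S}^{\mathrm{top}}$, exactly as in the argument of Eynard and Orantin for the classical Weil--Petersson volumes, but now with $H$ replaced by $H_q$ and the relevant Laplace transforms supplied by \cref{lem:dint,lem:sint}. Write $W_{g,n}(z_1, \ldots, z_n) = \frac{\partial}{\partial z_1}\cdots\frac{\partial}{\partial z_n}\cl\{V^q_{g,n}\}$, so that the claim is precisely that $W_{g,n}\,\dd z_1\cdots\dd z_n$ is the correlation differential produced by $\mathcal{S}^{\mathrm{top}}$. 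I would proceed by induction on $2g-2+n$. For the base cases, I would take the Laplace transforms of $V^q_{0,3}=1$ and $V^q_{1,1}(L)=\frac{1}{48}L^2+\frac{1}{2}\zeta_q(2)$ and check directly that $W_{0,3}\,\dd z_1\,\dd z_2\,\dd z_3$ and $W_{1,1}\,\dd z_1$ agree with the differentials $\omega_{0,3}$ and $\omega_{1,1}$ computed from $\mathcal{S}^{\mathrm{top}}$, the latter being readable from the computations already carried out in the proofs of \cref{prop:omegalimit,lem:topdegTR}.

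For the inductive step, I would apply the full Laplace transform to the $q$-Mirzakhani recursion (\cref{eq:mirzakhani} with $H$ replaced by $H_q$). On the left side, integration by parts gives $\cl\{\frac{\partial}{\partial L_1}(L_1 V^q_{g,n})\}=-z_1\frac{\partial}{\partial z_1}\cl\{V^q_{g,n}\}$, the boundary terms vanishing, so that after applying the remaining derivatives $\frac{\partial}{\partial z_2}\cdots\frac{\partial}{\partial z_n}$ the left side becomes $-z_1 W_{g,n}$. On the right side, I would note that the integrands $xy\,V^q_{g-1,n+1}(x,y,\bm{z}_S)$ and $x\,V^q_{g,n-1}(x,\ldots)$ are odd in each integration variable, since each $V^q$ is a polynomial in squares, so that the parity hypotheses of \cref{lem:dint,lem:sint} are met. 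Then \cref{lem:dint} converts the two double-integral terms into the principal part at $z_1=0$ of $z_1^{-1}\exp(L_q(z_1))$ times $W_{g-1,n+1}(z_1,z_1,\bm{z}_S)$ and the stable products $W_{g_1,|I|+1}(z_1,\bm{z}_I)\,W_{g_2,|J|+1}(z_1,\bm{z}_J)$, using the inductive hypothesis to identify $\cl\{P\}(z_1,z_1)$ with the relevant $W$'s; similarly, \cref{lem:sint} converts the single-integral term into the odd principal part encoding the $\omega_{0,2}$ insertion on $\mathcal{S}^{\mathrm{top}}$.

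It then remains to recognise the resulting identity as the principal-part form of the topological recursion (\cref{prop:principalparts}) for $\mathcal{S}^{\mathrm{top}}$. Here the recursion kernel is governed by $\frac{1}{\omega_{0,1}(z_1)-\omega_{0,1}(-z_1)}=\frac{\exp(L_q(z_1))}{2z_1^2\,\dd z_1}$, with $\sigma(z_1)=-z_1$ the involution at the single branch point $z_1=0$; this differs from the kernel $z_1^{-1}\exp(L_q(z_1))$ appearing in \cref{lem:dint,lem:sint} by exactly the factor $\frac{1}{2z_1}$, and the spare $z_1$ cancels against the prefactor $-z_1$ coming from the Laplace-transformed left side, so the two recursions match term by term.

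The main obstacle is precisely this bookkeeping: aligning the Laplace-transform and derivative conventions on the two sides, verifying that the factor of $z_1$ produced by $\frac{\partial}{\partial L_1}(L_1\,\cdot\,)$ exactly compensates for the order of the pole of the $\mathcal{S}^{\mathrm{top}}$ kernel at $z_1=0$, and correctly handling the odd-principal-part operation of \cref{lem:sint}, which must reproduce the symmetrisation $\frac{1}{(z_1-z_j)^2}+\frac{1}{(z_1+z_j)^2}$ under the involution $z\mapsto -z$ that is built into the topological recursion through the kernel's dependence on $\sigma(z)$. Once these factors and signs are reconciled, the inductive step closes and the proposition follows.
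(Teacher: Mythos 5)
Your proposal follows essentially the same route as the paper's own proof: Laplace-transform the $q$-deformed Mirzakhani recursion, convert the integral transforms via \cref{lem:dint,lem:sint} into principal parts of $z^{-1}\exp\big(\sum_j \tfrac{\zeta_q(2j)}{j}(4z^2)^j\big)$, identify the result with the principal-part form of topological recursion on $\mathcal{S}^{\mathrm{top}}$ (including the skew-symmetry under $z\mapsto -z$ that produces the symmetrised $\omega_{0,2}$ insertion), and close with the base cases $(0,3)$ and $(1,1)$ --- the paper phrases the final step as uniqueness of the solution to a common recursion with common initial data rather than explicit induction, but this is the same argument. The factor-and-sign bookkeeping you flag as the main obstacle is indeed where the care is needed (the paper itself inserts a factor $(-1)^n$ into its auxiliary differentials $\bar\omega_{g,n}$ to make the signs close up), but your accounting of the kernel $\tfrac{\exp(L_q(z_1))}{2z_1^2}$ versus $z_1^{-1}\exp(L_q(z_1))$ and the prefactor from $\tfrac{\partial}{\partial L_1}(L_1\,\cdot\,)$ matches the paper's resolution.
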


\begin{proof}
Take the Laplace transform of the $q$-analogue of Mirzakhani's recursion~\cite{do-nor25}, which replaces the kernel $H(x,y)$ in \eqref{eq:mirzakhani} with the $q$-deformed kernel $H_q(x,y)$ of \cref{eq:Hq}.
\begin{align} \label{eq:laprec}
& \cl \left\{ \frac{\partial}{\partial L_1} L_1 V^q_{g,n}(L_1, \bm{L}_K) \right\} \\
={}& \cl \bigg\{ \frac{1}{2} \int_0^\infty \!\! \int_0^\infty xy \, H_q(x+y,L_1)
\Big(V^q_{g-1,n+1}(x, y, \bm{L}_K) + \sum V^q_{g_1,|I|+1}(x,L_I)V^q_{g_2,|J|+1}(y,L_J)\Big) \dd x \, \dd y \nonumber\\
&\qquad + \frac{1}{2} \sum_{j=2}^n \int_0^\infty \hspace{-2mm}x(H_q(L_1+L_j,x)+H_q(L_1-L_j,x))V^q_{g,n-1}(x,L_{K\setminus\{j\}}) \dd x \bigg\} \notag \\
=& \frac{1}{2} \mathcal{P} \Big[z^{-1} \exp \bigg( \sum_{j=1}^\infty\frac{\zeta_q(2j)}{j} (4z^2)^j \bigg)\Big(\cl\{xyV^q_{g-1,n+1}\}(z_1,z_1,z_K)\nonumber\\
&\hspace{2cm}+\hspace{-2mm}\mathop{\sum_{g_1+g_2=g}}_{I \sqcup J = K}\hspace{-1mm}\cl\{xV^q_{g_1,|I|+1}\}(z_1,z_I)\cl\{yV^q_{g_2,|J|+1}\}(z_1,z_J)\Big)\Big]_{z_1=0} \nonumber\\
&\qquad+\sum_{j=2}^n\mathcal{P}\left[z^{-1} \exp \bigg( \sum_{j=1}^\infty\frac{\zeta_q(2j)}{j} (4z^2)^j \bigg)\frac{\cl\{xV^q_{g,n-1}\}(z_1,z_{K\setminus\{ j\}})}{z_j-z_1}\right]_{z_1=0}^- \nonumber.
\end{align}
which uses \cref{lem:dint,lem:sint}.

\begin{align}
\cl \bigg\{ \frac{\partial}{\partial L_1} L_1 V_{g,n}(L_1, \LL_S) \bigg\} &= \frac{1}{2} \cl \bigg\{ \int_0^\infty \!\!\! \int_0^\infty xy \, H_q(x+y, L_1) \, V_{g-1,n+1}(x, y, \LL_S) \, \dd x \, \dd y \bigg\} \notag \\
&+ \frac{1}{2} \sum_{\substack{g_1+g_2=g \\ I \sqcup J = S}} \cl \bigg\{ \int_0^\infty \!\!\! \int_0^\infty xy \, H_q(x+y,L_1) \, V_{g_1,|I|+1}(x, \LL_I) \, V_{g_2,|J|+1}(y, \LL_J) \, \dd x \ \dd y \bigg\} \notag \\
&+ \frac{1}{2} \sum_{k=2}^n \cl \bigg\{ \int_0^\infty x \big[ H_q(x, L_1+L_k) + H_q(x, L_1-L_k) \big] \, V_{g,n-1}(x, \LL_{S \setminus \{k\}}) \, \dd x \bigg\} \notag \\
&= \frac{1}{2} \mathcal{P} \bigg[z^{-1} \exp \bigg( \sum_{j=1}^\infty\frac{\zeta_q(2j)}{j} (4z^2)^j \bigg)\Big(\cl\{xyV^q_{g-1,n+1}\}(z_1,z_1,z_K) \notag\\
&\hspace{2cm}+\hspace{-2mm}\mathop{\sum_{g_1+g_2=g}}_{I \sqcup J = K}\hspace{-1mm}\cl\{xV^q_{g_1,|I|+1}\}(z_1, \bm{z}_I)\ cl \{ yV^q_{g_2,|J|+1}\}(z_1, \b,{z}_J) \Big) \bigg]_{z_1=0} \notag \\
&\qquad+\sum_{j=2}^n\mathcal{P}\left[z^{-1} \exp \bigg( \sum_{j=1}^\infty\frac{\zeta_q(2j)}{j} (4z^2)^j \bigg)\frac{\cl\{xV^q_{g,n-1}\}(z_1,z_{K\setminus\{ j\}})}{z_j-z_1}\right]_{z_1=0}^-.
\end{align}

Define 
\[
\bar{\omega}_{g,n}=(-1)^n\frac{\partial}{\partial z_1} \cdots \frac{\partial}{\partial z_n}\cl\{V^q_{g,n}(L_1, \ldots, L_n)\} \, \dd z_1 \cdots \dd z_n.
\] 
We will prove that $\bar\omega_{g,n}$ and the correlators $\omega_{g,n}$ satisfy the same recursion relations and initial values, and in particular conclude that $\bar\omega_{g,n}=\omega_{g,n}$.

Take $(-1)^{n-1}\frac{\partial}{\partial z_2} \cdots \frac{\partial}{\partial z_n}\Big[$\cref{eq:laprec}$\Big] \, \dd z_1 \cdots \dd z_n$, and use $\frac{\partial}{\partial z_1}\cl\{P(z_1)\}=-z^{-1}\cl\{\frac{\partial}{\partial L_1}L_1P(L_1)\}$,
 to get
\begin{align} 
\bar\omega_{g,n}(z_1, \ldots, z_n)=&\frac12\mathcal{P}\left[z_1^{-2} \exp \bigg( \sum_{j=1}^\infty\frac{\zeta_q(2j)}{j} (4z_1^2)^j \bigg)\bar\omega_{g-1,n+1}(z_1,z_1,z_K)\right]_{z_1=0}\\
&+\frac12\mathcal{P}\Big[z_1^{-2} \exp \bigg( \sum_{j=1}^\infty\frac{\zeta_q(2j)}{j} (4z_1^2)^j \bigg)\mathop{\sum_{g_1+g_2=g}}_{I \sqcup J = K}\hspace{-1mm}\bar\omega_{g_1,|I|+1}(z_1,z_I)\bar\omega_{g_2,|J|+1}(z_1,z_J)\Big]_{z_1=0}
\nonumber\\
&+\sum_{j=2}^n\mathcal{P}\left[z_1^{-2} \exp \bigg( \sum_{j=1}^\infty\frac{\zeta_q(2j)}{j} (4z_1^2)^j \bigg)\frac{\bar\omega_{g,n-1}(z_1,z_{K\setminus\{ j\}})}{(z_j-z_1)^2}\right]_{z_1=0}^-.
\nonumber
\end{align}
We have used $z_1^{-1}[F(z_1) \dd z_1]_{z_1=0}=[z_1^{-1}F(z_1) \dd z_1]_{z_1=0}$ which holds because the residue of $\bar\omega_{g,n}(z_1, \ldots, z_n)$ at $z_1=0$ vanishes. The factors $xy$, $x$ and $y$ on the right hand side of \cref{eq:laprec} supply derivatives such as $\cl\{xyV^q_{g-1,n+1}\}(z_1,z_1,z_K)=\frac{\partial^2}{\partial w\partial z}\cl\{V^q_{g-1,n+1}\}(w\hspace{-1mm}=\hspace{-1mm}z_1,z\hspace{-1mm}=\hspace{-1mm}z_1,z_K)$.

Topological recursion for the spectral curve $\mathcal{S}^{\mathrm{top}}$ is 
\begin{align*}
\omega_{g,n}(z_1,z_K)&=\Res_{z=0}K(z_1,z) \cf(\{\omega_{g',n'}(z,z_K)\}) \dd z \, \dd z \, \dd z_K \\
&=-\frac12\Res_{z=0}\left(\frac{\dd z_1}{z_1-z} - \frac{\dd z_1}{z_1+z}\right)\frac12z^{-2} \exp \bigg( \sum_{j=1}^\infty\frac{\zeta_q(2j)}{j} (4z^2)^j \bigg) \cf(\{\omega_{g',n'}(z,z_K)\}) \dd z \, \dd z_K\\
&=-\frac12\mathcal{P}\left[z^{-2} \exp \bigg( \sum_{j=1}^\infty\frac{\zeta_q(2j)}{j} (4z^2)^j \bigg) \cf(\{\omega_{g',n'}(z_1,z_K)\}) \dd z_1 \, \dd z_K \right]_{z_1=0}
\end{align*}
where $\cf(z_1,z_K)$ is a rational function given explicitly by
\begin{align*}
\cf(z_1,z_K) \dd z_1^2 \, \dd z_K = &\omega_{g-1,n+1}(z,-z,z_L)+ \hspace{-2mm}\mathop{\sum_{g_1+g_2=g}}_{I\sqcup J=L}^{\rm stable} \omega_{g_1,|I|+1}(z,z_I) \, \omega_{g_2,|J|+1}(-z,z_J) \\
+\sum_{j=2}^n&\big(\omega_{0,2}(z,z_j) \, \omega_{g,n-1}(-z,z_{K\setminus\{ j\}})+\omega_{0,2}(-z,z_j) \, \omega_{g,n-1}(z,z_{K\setminus\{ j\}})\big)\\
=&-\omega_{g-1,n+1}(z,z,p_L)- \hspace{-2mm}\mathop{\sum_{g_1+g_2=g}}_{I\sqcup J=L}^{\rm stable} \omega_{g_1,|I|+1}(z,z_I) \, \omega_{g_2,|J|+1}(z,z_J) \\
&-\sum_{j=2}^n\big(\omega_{0,2}(z,z_j)-\omega_{0,2}(-z,z_j)\big)\omega_{g,n-1}(z,z_{K\setminus\{ j\}})
\end{align*}
where we have used skew-symmetry of $\omega_{g,n}$ under $z_i\mapsto-z_i$, except for $\omega_{0,2}$. Hence,
\begin{align*} 
\omega_{g,n}(z_1,z_K)=&\frac12\mathcal{P}\left[z^{-2} \exp \bigg( \sum_{j=1}^\infty\frac{\zeta_q(2j)}{j} (4z^2)^j \bigg)\omega_{g-1,n+1}(z_1,z_1,z_K)\right]_{z_1=0}\\
&+\frac12\mathcal{P}\Big[z^{-2} \exp \bigg( \sum_{j=1}^\infty\frac{\zeta_q(2j)}{j} (4z^2)^j \bigg)\mathop{\sum_{g_1+g_2=g}}_{I \sqcup J = K}^{\rm stable}\hspace{-1mm}\omega_{g_1,|I|+1}(z_1,z_I)\omega_{g_2,|J|+1}(z_1,z_J)\Big]_{z_1=0}\\
&+\sum_{j=2}^n\mathcal{P}\left[z^{-2} \exp \bigg( \sum_{j=1}^\infty\frac{\zeta_q(2j)}{j} (4z^2)^j \bigg)\frac{\omega_{g,n-1}(z_1,z_{K\setminus\{ j\}})}{(z_j-z_1)^2}\right]_{z_1=0}^-.
\end{align*}
where we have used $[\omega_{0,2}(-z,z_j)\eta(z)]_{z=0}^-=-[\omega_{0,2}(z,z_j)\eta(z)]_{z=0}^-$ for $\eta(z)$ odd.

The rational differentials $\bar\omega_{g,n}$ and $\omega_{g,n}$ are uniquely determined by their respective recursions and the initial value
\[\bar\omega_{1,1}(z_1)=-\frac{\partial}{\partial z_1}\cl\{V^q_{1,1}(L_1)\}\dd z_1 = -\frac{\partial}{\partial z_1}\cl\Big\{\frac{1}{48}L^2+\frac{1}{2}\zeta_q(2)\Big\} \dd z_1 = \Big(\frac{1}{8z^4}+\frac{1}{2z_1^2}\zeta_q(2)\Big) \dd z_1 = \omega_{1,1}(z_1)\]
which both coincide, hence $\bar\omega_{g,n}=\omega_{g,n}$ as required.
\end{proof}
Theorem~\ref{thm:topdegterms} is an immediate consequence of Lemma~\ref{lem:topdegTR} and Proposition~\ref{specqvol}.

\subsection{The $q$-deformation of classical Weil--Petersson volumes}

In general, the correlators of a spectral curve can be expressed via intersection numbers of tautological classes over moduli spaces of stable curves. Eynard proved that for the rational spectral curve given by
\[
x(z) = \frac{1}{2} z^2 \qquad \text{and} \qquad y(z) = z + \sum_{k=1}^{\infty}y_k z^{2k+1},
\]
the correlation differentials are given by the following~\cite{eyn14}.
\[
\omega_{g,n} = \frac{\partial}{\partial z_1} \cdots \frac{\partial}{\partial z_n} \cl \bigg\{ \int_{\overline{\modm}_{g,n}} \!\!\! \exp \bigg(\sum_{m=1}^\infty s_m \kappa_m \bigg) \exp \bigg( \frac{1}{2} \sum_{i=1}^n \psi_i L_i^2 \bigg) \bigg\} \, \dd z_1 \cdots \dd z_n
\] 
Here, $s_1, s_2, s_3, \ldots$ are defined as functions of $y_1, y_2, y_3, \ldots$ by the formula
\[
\exp \left( -\sum_{m=1}^\infty s_m \lambda^m \right) = \sum_{k=0}^\infty (2k+1)!! \, y_k \lambda^k = \frac{1}{\sqrt{2\pi\lambda^3}} \int_{-\infty}^\infty \dd z \cdot zy(z) \, e^{-\frac{z^2}{2\lambda}}.
\]
For $y^{\mathrm{top}}(z) = z \exp \bigg( -\displaystyle\sum_{m=1}^\infty \dfrac{\zeta_q(2m)}{m} (4z^2)^m \bigg)$, we have
\[
\exp \left( -\sum_{m=1}^\infty s_m(q) \lambda^m \right) = \frac{1}{\sqrt{2\pi\lambda^3}} \int_{-\infty}^\infty \dd z \cdot z^2 \exp \bigg( -\sum_{m=1}^\infty \frac{\zeta_q(2m)}{m} (4z^2)^m \bigg) \, e^{-\frac{z^2}{2\lambda}},
\] 
which defines $\Omega_q(\kappa_1,\kappa_2, \ldots) \in H^*(\overline{\modm}_{g,n}; \bq[[q]])$ by
\begin{align*}
\Omega_q(\kappa_1, \kappa_2, \ldots) &= \sum_{m=1}^\infty s_m(q) \, \kappa_m \\
&= 12 \zeta_q(2) \kappa_1 + 24 (5\zeta_q(4) - 2\zeta_q(2)^2) \kappa_2 + 64 (35\zeta_q(6) - 30\zeta_q(4) \zeta_q(2) + 4 \zeta_q(2)^3) \kappa_3 + \cdots.
\end{align*}
Thus, we obtain the following expression for the $q$-deformed Weil--Petersson volume in terms of higher Mumford volumes.
\[
V^q_{g,n}(L_1, \ldots, L_n) = \int_{\overline{\modm}_{g,n}} \exp \bigg( \Omega_q(\kappa_1,\kappa_2, \ldots) + \frac{1}{2} \sum_{i=1}^n L_i^2 \psi_i \bigg)
\]

The formula above leads to the following definition of the $q$-deformed Weil--Petersson volumes for moduli spaces of stable curves without marked points.
\[
V_g(q) := \int_{\overline{\modm}_g} \exp \big( \Omega_q(\kappa_1,\kappa_2, \ldots) \big) \in \bq[[q]]
\]
For the simplest case $g=2$, we have
\[
V_2(q) = \frac{191}{90} \zeta_q(2)^3 + \frac{13}{3} \zeta_q(2) \zeta_q(4) + \frac{35}{18} \zeta_q(6),
\]
which satisfies
\[
\lim_{q \to 1} (1-q)^6 \, V_2(q) = \frac{191}{90} \zeta(2)^3 + \frac{13}{3} \zeta(2) \zeta(4) + \frac{35}{18} \zeta(6) = \frac{43\pi^6}{2160} = \mathrm{Vol}^{\mathrm{WP}}(\cm_2).
\]
For $g=3$, we have
\begin{align*}
V_3(q)=&\tfrac{10312177}{11340}\zeta_q(2)^6 + \tfrac{3829529}{756}\zeta_q(2)^4\zeta_q(4) + \tfrac{452120}{81}\zeta_q(2)^3\zeta_q(6) + \tfrac{1355537}{252}\zeta_q(2)^2\zeta_q(4)^2 + \tfrac{31453}{6}\zeta_q(2)^2\zeta_q(8)\\
+ \tfrac{151348}{27}&\zeta_q(2)\zeta_q(4)\zeta_q(6)+ \tfrac{51128}{15}\zeta_q(2)\zeta_q(10)+ \tfrac{155395}{252}\zeta_q(4)^3 + \tfrac{10085}{6}\zeta_q(4)\zeta_q(8) + \tfrac{54950}{81}\zeta_q(6)^2 + \tfrac{10010}{9}\zeta_q(12),
\end{align*}
which satisfies
\begin{align*}
\lim_{q \to 1} (1-q)^{12} \, V_3(q) &= \tfrac{10312177}{11340}\zeta(2)^6 + \tfrac{3829529}{756}\zeta(2)^4\zeta(4) + \tfrac{452120}{81}\zeta(2)^3\zeta(6) + \tfrac{1355537}{252}\zeta(2)^2\zeta(4)^2 + \cdots \\
&= \frac{176557\pi^{12}}{1209600} = \mathrm{Vol}^{\mathrm{WP}}(\cm_3).
\end{align*}

\appendix

\section{Table of $q$-Weil--Petersson volumes } \label{app:WPvolumes}

Following are several examples of $q$-Weil--Petersson volumes and the corresponding Weil--Petersson volumes. In each case, the $q \to 1$ limit can be explicitly observed. In some cases, we evaluate at $\LL=\mathbf{0}$. 
\begin{align*}
V^{\mathrm{WP}}_{0,3}(\LL) &= 1 \\
V^q_{0,3}(\LL) &= 1 \\
V^{\mathrm{WP}}_{0,4}(\LL) &= \frac{1}{2}(L_1^2+L_2^2+L_3^2+L_4^2) + 2\pi^2 \\
V^q_{0,4}(\LL) &= \frac{1}{2}(L_1^2+L_2^2+L_3^2+L_4^2) + 12\zeta_q(2) \\
V^{\mathrm{WP}}_{0,5}(\LL) &= \frac{1}{8}(L_1^4 + \cdots + L_5^4) + \frac{1}{2}(L_1^2L_2^2 + \cdots + L_4^2L_5^2) + 3\pi^2(L_1^2 + \cdots + L_5^2) + 10\pi^4 \\
V_{0,5}^q(\mathbf{0})&=312\zeta_q(2)^2+120\zeta_q(4)\\
V^{\mathrm{WP}}_{1,1}(\LL) &= \frac{1}{48}L_1^2 + \frac{\pi^2}{12} \\
V^q_{1,1}(\LL) &= \frac{1}{48}L_1^2 + \frac12\zeta_q(2) \\
V^{\mathrm{WP}}_{1,2}(\LL) &= \frac{1}{192}(L_1^4+L_2^4) + \frac{1}{96} L_1^2L_2^2 + \frac{\pi^2}{12}(L_1^2+L_2^2) + \frac{\pi^4}{4} \\
V_{1,2}^q(\LL)&=\frac{1}{192}(L_1^2+L_2^2)^2 + \frac{1}{2}(L_1^2+L_2^2)\zeta_q(2) + 7\zeta_q(2)^2 + 5\zeta_q(4)\\
V^{\mathrm{WP}}_{2,1}(\LL) &= \frac{1}{4423680} L_1^8 + \frac{29\pi^2}{138240} L_1^6 + \frac{139\pi^4}{23040} L_1^4 + \frac{169\pi^6}{2880} L_1^2 + \frac{29\pi^8}{192}\\
V_{2,1}^q(\LL)&=\frac{1}{442368}L_1^8 + \frac{29}{23040}L_1^6\zeta_q(2) + \frac{1}{1920}(359\zeta_q(2)^2 + 145\zeta_q(4))L_1^4\\
&\quad + \frac{1}{24}(191\zeta_q(2)^3 + 243\zeta_q(2)\zeta_q(4) + 70\zeta_q(6))L_1^2\\
&\quad +\frac{845}{12}\zeta_q(2)^4 + \frac{399}{2}\zeta_q(2)^2\zeta_q(4) + \frac{185}{4}\zeta_q(4)^2 + \frac{406}{3}\zeta_q(2)\zeta_q(6) + \frac{105}{2}\zeta_q(8)\\
V^{\mathrm{WP}}_{2,2}(\LL) &= \frac{1}{4423680} (L_1^{10}+L_2^{10}) + \frac{1}{294912}(L_1^8L_2^2 + L_2^8L_1^2) + \frac{29}{2211840}(L_1^6L_2^4+L_2^6L_1^4) + \frac{11\pi^2}{276480}(L_1^8+L_2^8) \\
V_{2,2}^q(\mathbf{0})&=\frac{247429}{15}\zeta_q(2)^5 - \frac{67844}{3}\zeta_q(2)^3\zeta_q(4) + 8525\zeta_q(2)\zeta_q(4)^2 + \frac{46228}{3}\zeta_q(2)^2\zeta_q(6) + \frac{13580}{3}\zeta_q(4)\zeta_q(6)\\
&\quad+ \frac{29\pi^2}{69120}(L_1^6L_2^2+L_2^6L_1^2) + \frac{7\pi^2}{7680}L_1^4L_2^4 + \frac{19\pi^4}{7680}(L_1^6+L_2^6) + \frac{181\pi^4}{11520}(L_1^4L_2^2+L_2^4L_1^2) \\
&\quad+ \frac{551\pi^6}{8640}(L_1^4+L_2^4) + \frac{7\pi^6}{36} L_1^2L_2^2 + \frac{1085\pi^8}{1728}(L_1^2+L_2^2) + \frac{787\pi^{10}}{480} \\
V_{0,5}^q(\mathbf{0})&=312\zeta_q(2)^2+120\zeta_q(4) \\
V_{0,6}^q(\mathbf{0})&=17824 \zeta_q(2)^3 - 1920 \zeta_q(2) \zeta_q(4) + 2240 \zeta_q(6)
\end{align*}

\bibliographystyle{plain}
\bibliography{DSSYK-to-mirzakhani}

\begin{thebibliography}{10}

\bibitem{ACEH20}
A.~Alexandrov, G.~Chapuy, B.~Eynard, and J.~Harnad.
\newblock Weighted {H}urwitz numbers and topological recursion.
\newblock {\em Comm. Math. Phys.}, 375(1):237--305, 2020.

\bibitem{ABCO24}
J\o rgen~Ellegaard Andersen, Ga\"{e}tan Borot, Leonid~O. Chekhov, and Nicolas
  Orantin.
\newblock The {ABCD} of topological recursion.
\newblock {\em Adv. Math.}, 439:Paper No. 109473, 105, 2024.

\bibitem{BBNR21}
Micha Berkooz, Nadav Brukner, Vladimir Narovlansky, and Amir Raz.
\newblock Multi-trace correlators in the {SYK} model and non-geometric
  wormholes.
\newblock {\em J. High Energy Phys.}, (9):Paper No. 196, 68, 2021.

\bibitem{BINT19}
Micha Berkooz, Mikhail Isachenkov, Vladimir Narovlansky, and Genis Torrents.
\newblock Towards a full solution of the large {N} double-scaled {SYK} model.
\newblock {\em J. High Energy Phys.}, (3):079, 70, 2019.

\bibitem{BCGLS21}
Ga\"{e}tan Borot, S\'{e}verin Charbonnier, Elba Garcia-Failde, Felix Leid, and
  Sergey Shadrin.
\newblock Functional relations for higher-order free cumulants.
\newblock \href{https://arxiv.org/abs/2112.12184}{arXiv:2112.12184 [math.OA]},
  2021.

\bibitem{BDKLM23}
Ga\"{e}tan Borot, Norman Do, Maksim Karev, Danilo Lewa\'{n}ski, and Ellena
  Moskovsky.
\newblock Double {H}urwitz numbers: polynomiality, topological recursion and
  intersection theory.
\newblock {\em Math. Ann.}, 387(1-2):179--243, 2023.

\bibitem{bor-eyn15}
Ga\"{e}tan Borot and Bertrand Eynard.
\newblock All order asymptotics of hyperbolic knot invariants from
  non-perturbative topological recursion of {A}-polynomials.
\newblock {\em Quantum Topol.}, 6(1):39--138, 2015.

\bibitem{BHLM14}
Vincent Bouchard, Daniel Hern\'{a}ndez~Serrano, Xiaojun Liu, and Motohico
  Mulase.
\newblock Mirror symmetry for orbifold {H}urwitz numbers.
\newblock {\em J. Differential Geom.}, 98(3):375--423, 2014.

\bibitem{BKMP09}
Vincent Bouchard, Albrecht Klemm, Marcos Mari\~{n}o, and Sara Pasquetti.
\newblock Remodeling the {B}-model.
\newblock {\em Comm. Math. Phys.}, 287(1):117--178, 2009.

\bibitem{bou-mar08}
Vincent Bouchard and Marcos Mari\~{n}o.
\newblock Hurwitz numbers, matrix models and enumerative geometry.
\newblock In {\em From {H}odge theory to integrability and {TQFT}
  tt*-geometry}, volume~78 of {\em Proc. Sympos. Pure Math.}, pages 263--283.
  Amer. Math. Soc., Providence, RI, 2008.

\bibitem{che-eyn06}
Leonid Chekhov and Bertrand Eynard.
\newblock Hermitian matrix model free energy: {F}eynman graph technique for all
  genera.
\newblock {\em J. High Energy Phys.}, (3):014, 18, 2006.

\bibitem{cot-et-al17}
Jordan~S. Cotler, Guy Gur-Ari, Masanori Hanada, Joseph Polchinski, Phil Saad,
  Stephen~H. Shenker, Douglas Stanford, Alexandre Streichera, and Masaki
  Tezuka.
\newblock Black holes and random matrices.
\newblock {\em J. High Energy Phys.}, (5):Paper No. 118, 52, 2017.

\bibitem{dij-fuj-man11}
Robbert Dijkgraaf, Hiroyuki Fuji, and Masahide Manabe.
\newblock The volume conjecture, perturbative knot invariants, and recursion
  relations for topological strings.
\newblock {\em Nuclear Phys. B}, 849(1):166--211, 2011.

\bibitem{do13}
Norman Do.
\newblock Moduli spaces of hyperbolic surfaces and their {W}eil-{P}etersson
  volumes.
\newblock In {\em Handbook of moduli. {V}ol. {I}}, volume~24 of {\em Adv. Lect.
  Math. (ALM)}, pages 217--258. Int. Press, Somerville, MA, 2013.

\bibitem{do-dye-mat17}
Norman Do, Alastair Dyer, and Daniel~V. Mathews.
\newblock Topological recursion and a quantum curve for monotone {H}urwitz
  numbers.
\newblock {\em J. Geom. Phys.}, 120:19--36, 2017.

\bibitem{do-lei-nor16}
Norman Do, Oliver Leigh, and Paul Norbury.
\newblock Orbifold {H}urwitz numbers and {E}ynard-{O}rantin invariants.
\newblock {\em Math. Res. Lett.}, 23(5):1281--1327, 2016.

\bibitem{do-man14}
Norman Do and David Manescu.
\newblock Quantum curves for the enumeration of ribbon graphs and hypermaps.
\newblock {\em Commun. Number Theory Phys.}, 8(4):677--701, 2014.

\bibitem{do-nor25}
Norman Do and Paul Norbury.
\newblock A $q$-analogue of {M}irzakhani's recursion for {W}eil--{P}etersson
  volumes.
\newblock \href{https://arxiv.org/abs/2510.12431}{arXiv:2510.12431 [math.AG]},
  2025.

\bibitem{do-tay25}
Norman Do and Arlo Taylor.
\newblock Double-scaled {SYK} correlators and map enumeration.
\newblock In preparation, 2025.

\bibitem{DOSS14}
P.~Dunin-Barkowski, N.~Orantin, S.~Shadrin, and L.~Spitz.
\newblock Identification of the {G}ivental formula with the spectral curve
  topological recursion procedure.
\newblock {\em Comm. Math. Phys.}, 328(2):669--700, 2014.

\bibitem{DOPS18}
Petr Dunin-Barkowski, Nicolas Orantin, Aleksandr Popolitov, and Sergey Shadrin.
\newblock Combinatorics of loop equations for branched covers of sphere.
\newblock {\em Int. Math. Res. Not. IMRN}, (18):5638--5662, 2018.

\bibitem{eyn14}
B.~Eynard.
\newblock Invariants of spectral curves and intersection theory of moduli
  spaces of complex curves.
\newblock {\em Commun. Number Theory Phys.}, 8(3):541--588, 2014.

\bibitem{eyn-ora07a}
B.~Eynard and N.~Orantin.
\newblock Invariants of algebraic curves and topological expansion.
\newblock {\em Commun. Number Theory Phys.}, 1(2):347--452, 2007.

\bibitem{eyn-ora15}
B.~Eynard and N.~Orantin.
\newblock Computation of open {G}romov-{W}itten invariants for toric
  {C}alabi-{Y}au 3-folds by topological recursion, a proof of the {BKMP}
  conjecture.
\newblock {\em Comm. Math. Phys.}, 337(2):483--567, 2015.

\bibitem{EGGLS24}
Bertrand Eynard, Elba Garcia-Failde, Paolo Gregori, Danilo Lewa\'{n}ski, and
  Ricardo Schiappa.
\newblock Resurgent asymptotics of {J}ackiw-{T}eitelboim gravity and the
  nonperturbative topological recursion.
\newblock {\em Ann. Henri Poincar\'{e}}, 25(9):4121--4193, 2024.

\bibitem{eyn-mul-saf11}
Bertrand Eynard, Motohico Mulase, and Bradley Safnuk.
\newblock The {L}aplace transform of the cut-and-join equation and the
  {B}ouchard-{M}ari\~{n}o conjecture on {H}urwitz numbers.
\newblock {\em Publ. Res. Inst. Math. Sci.}, 47(2):629--670, 2011.

\bibitem{eyn-ora07b}
Bertrand Eynard and Nicolas Orantin.
\newblock {W}eil--{P}etersson volume of moduli spaces, {M}irzakhani's recursion
  and matrix models.
\newblock \href{https://arxiv.org/abs/0705.3600}{arXiv:0705.3600 [math-ph]},
  2007.

\bibitem{eyn-ora09}
Bertrand Eynard and Nicolas Orantin.
\newblock Topological recursion in enumerative geometry and random matrices.
\newblock {\em J. Phys. A}, 42(29):293001, 117, 2009.

\bibitem{fan-liu-zon20}
Bohan Fang, Chiu-Chu~Melissa Liu, and Zhengyu Zong.
\newblock On the remodeling conjecture for toric {C}alabi-{Y}au 3-orbifolds.
\newblock {\em J. Amer. Math. Soc.}, 33(1):135--222, 2020.

\bibitem{gia-mai-maz25}
Alessandro Giacchetto, Pronobesh Maity, and Edward~A. Mazenc.
\newblock Matrix correlators as discrete volumes of moduli space i: Recursion
  relations, the {BMN}-limit and {DSSYK}.
\newblock \href{https://arxiv.org/pdf/2510.17728}{arXiv:2510.17728 [hep-th]},
  2025.

\bibitem{GJKS15}
Jie Gu, Hans Jockers, Albrecht Klemm, and Masoud Soroush.
\newblock Knot invariants from topological recursion on augmentation varieties.
\newblock {\em Comm. Math. Phys.}, 336(2):987--1051, 2015.

\bibitem{JKMS23}
Daniel~Louis Jafferis, David~K. Kolchmeyer, Baur Mukhametzhanov, and Julian
  Sonner.
\newblock {J}ackiw-{T}eitelboim gravity with matter, generalized eigenstate
  thermalization hypothesis, and random matrices.
\newblock {\em Phys. Rev. D}, 108:066015, Sep 2023.

\bibitem{kaz-zog15}
Maxim Kazarian and Peter Zograf.
\newblock Virasoro constraints and topological recursion for {G}rothendieck's
  dessin counting.
\newblock {\em Lett. Math. Phys.}, 105(8):1057--1084, 2015.

\bibitem{kit15}
Alexei Kitaev.
\newblock A simple model of quantum holography (parts 1 and 2).
\newblock Entanglement in Strongly-Correlated Quantum Matter, 2015.
\newblock \url{https://online.kitp.ucsb.edu/online/entangled15/}.

\bibitem{kon92}
Maxim Kontsevich.
\newblock Intersection theory on the moduli space of curves and the matrix
  {A}iry function.
\newblock {\em Comm. Math. Phys.}, 147(1):1--23, 1992.

\bibitem{kon-soi18}
Maxim Kontsevich and Yan Soibelman.
\newblock Airy structures and symplectic geometry of topological recursion.
\newblock In {\em Topological recursion and its influence in analysis,
  geometry, and topology}, volume 100 of {\em Proc. Sympos. Pure Math.}, pages
  433--489. Amer. Math. Soc., Providence, RI, 2018.

\bibitem{mir07a}
Maryam Mirzakhani.
\newblock Simple geodesics and {W}eil-{P}etersson volumes of moduli spaces of
  bordered {R}iemann surfaces.
\newblock {\em Invent. Math.}, 167(1):179--222, 2007.

\bibitem{mir07b}
Maryam Mirzakhani.
\newblock Weil-{P}etersson volumes and intersection theory on the moduli space
  of curves.
\newblock {\em J. Amer. Math. Soc.}, 20(1):1--23, 2007.

\bibitem{nor10}
Paul Norbury.
\newblock Counting lattice points in the moduli space of curves.
\newblock {\em Math. Res. Lett.}, 17(3):467--481, 2010.

\bibitem{nor13}
Paul Norbury.
\newblock String and dilaton equations for counting lattice points in the
  moduli space of curves.
\newblock {\em Trans. Amer. Math. Soc.}, 365(4):1687--1709, 2013.

\bibitem{nor20}
Paul Norbury.
\newblock Enumerative geometry via the moduli space of super riemann surfaces.
\newblock \href{https://arxiv.org/abs/2005.04378}{arXiv:2005.04378 [math.AG]},
  2020.

\bibitem{nor-sco13}
Paul Norbury and Nick Scott.
\newblock Polynomials representing {E}ynard-{O}rantin invariants.
\newblock {\em Q. J. Math.}, 64(2):515--546, 2013.

\bibitem{nor-sco14}
Paul Norbury and Nick Scott.
\newblock Gromov-{W}itten invariants of {$\Bbb{P}^1$} and {E}ynard-{O}rantin
  invariants.
\newblock {\em Geom. Topol.}, 18(4):1865--1910, 2014.

\bibitem{oku23}
Kazumi Okuyama.
\newblock Discrete analogue of the {W}eil-{P}etersson volume in double scaled
  {SYK}.
\newblock {\em J. High Energy Phys.}, (9):Paper No. 133, 15, 2023.

\bibitem{sac-ye93}
Subir Sachdev and Jinwu Ye.
\newblock Gapless spin-fluid ground state in a random quantum {H}eisenberg
  magnet.
\newblock {\em Phys. Rev. Lett.}, 70:3339--3342, May 1993.

\bibitem{wit91}
Edward Witten.
\newblock Two-dimensional gravity and intersection theory on moduli space.
\newblock In {\em Surveys in differential geometry ({C}ambridge, {MA}, 1990)},
  pages 243--310. Lehigh Univ., Bethlehem, PA, 1991.

\end{thebibliography}

\end{document}